\documentclass[a4paper]{amsart}
\usepackage{a4wide}

\usepackage[english]{babel}
\usepackage{amsfonts,amsmath,amssymb,amsthm,mathrsfs}
\usepackage{mathtools}
\usepackage[dvipsnames]{xcolor}
\usepackage{comment}
\usepackage{hyperref}
\usepackage{soul}

\newcommand{\bv}{\mathbf{v}}
\newcommand{\bx}{\mathbf{x}}
\newcommand{\R}{\mathbb{R}}

\renewcommand{\div}{\operatorname{div}}

\definecolor{wineRed}{rgb}{0.7,0,0.3}
\newcommand{\PR}[1]{{\color{wineRed}#1}}

\theoremstyle{plain}
\newtheorem{theorem}{Theorem}[section]
\newtheorem{lemma}[theorem]{Lemma}

\newtheorem{proposition}[theorem]{Proposition}
\newtheorem{corollary}[theorem]{Corollary}
\theoremstyle{remark}
\newtheorem{remark}[theorem]{Remark}
\theoremstyle{definition}
\newtheorem{definition}[theorem]{Definition}
\numberwithin{equation}{section}

\newcommand{\cA}{\mathcal A}
\newcommand{\cB}{\mathcal B}
\newcommand{\cE}{\mathcal E}
\newcommand{\cF}{\mathcal F}

\newcommand{\cL}{\mathcal L}
\newcommand{\cM}{\mathcal M}
\newcommand{\cS}{\mathcal S}
\newcommand{\cW}{\mathcal W}
\newcommand{\bC}{\mathbb C}
\newcommand{\bR}{\mathbb R}
\newcommand{\bN}{\mathbb N}

\newcommand{\de}{\mathrm{d}}
\newcommand{\rmH}{\mathrm{H}}
\newcommand{\rms}{\mathrm{s}}
\newcommand{\rmu}{\mathrm{u}}

\newcommand{\scrH}{\mathscr{H}}
\newcommand{\scrL}{\mathscr{L}}

\newcommand{\norm}[1]{\lVert #1 \rVert}
\newcommand{\dup}[2]{\langle #1, #2 \rangle}

\DeclareMathOperator{\dist}{dist}

\title{Stabilization of solutions to a model of Langmuir-Blodgett films}
\author[M.~Morandotti]{Marco Morandotti}
\author[P.~Rybka]{Piotr Rybka}
\author[G.~Wheeler]{Glen Wheeler}
\address[G.~Wheeler]{University of Wollongong\\
Faculty of Engineering and Information Sciences\\
Northfields Ave, NSW, Australia\\
ORCiD ID \url{https://orcid.org/0000-0003-3314-5647}}
\email{glenw@uow.edu.au}
\address[M.~Morandotti]{Politecnico di Torino Department of Mathematical Sciences ``G.~L.~Lagrange'', Corso Duca degli Abruzzi, 24, 10129 Torino, Italy. ORCiD ID \url{https://orcid.org/0000-0003-3528-6152}}
\email{marco.morandotti@polito.it}
\address[P.~Rybka]{University of Warsaw \\ 
Faculty of Mathematics, Informatics and Mechanics \\
ul. Banacha 2\\
02-097 Warsaw, Poland \\ 
ORCiD ID \url{https://orcid.org/0000-0002-0694-8201}}
\email{rybka@mimuw.edu.pl}
\date{\today}

\begin{document}

\begin{abstract}
We show stabilisation of solutions to one-dimensional advective Cahn--Hilliard equation modeling the Langmuir--Blodgett thin films. This problem has the structure of a gradient flow perturbed by a linear term  $\beta u_x$\,. Through application of an abstract result by Carvalho--Langa--Robinson, we show that for small $\beta$ the equation has the structure of gradient flow in a weak sense. Combining this with the finite number of steady states implies stabilization of solutions.
\end{abstract}

\maketitle
\noindent
{\bf Key words:} stabilization of solutions, gradient-type systems, Cahn--Hilliard type equation, Langmuir–Blodgett transfer.

\bigskip\noindent
{\bf 2020 Mathematics Subject Classification.} 35B40 (35K35, 37L30, 74K35).
\section{Introduction}
Langmuir--Blodgett films \cite{B1935,L1917} are obtained by deposition of a thin film of amphiphilic molecules on a solid substrate, through a mechanism known as the Langmuir--Blodgett transfer.
In the typical setting, the solid substrate is immersed in a trough, where the amphiphiles float freely in the disordered state at the surface of a liquid.
The substrate is lifted vertically with velocity $\mathbf{v}$ and the amphiphiles, whose concentration on the surface of the liquid is kept constant, depose on it.
This process has very many industrial applications \cite{O92,Roberts}, and also lends itself to theoretical investigation, in particular for some aspects related to pattern formation. 
Indeed, according to the extraction velocity $\mathbf{v}$, the amphiphiles on the substrate can remain in the disordered (liquid-expanded) phase, align in the ordered (liquid-condensed) phase, or even form striped or checkerboard-like patterns in which the two phases alternate regularly \cite{bottom-up,confinement,science}.

To model that the liquid expanded and the liquid-condensed phases are preferred ones, one usually resorts to a potential which attains its minimum value precisely at those configurations.
From the mathematical viewpoint, the paradigmatic tool to describe this and similar phenomena is the celebrated Cahn--Hilliard equation, originally proposed to study phase separation phenomena \cite{CH1958}, see \cite{NC08} and the references therein, and also \cite{Miranville} for a recent review of this subject.
A few years ago, a variation of the classical Cahn--Hilliard equation was proposed in \cite{KGFT12,wilczek}  to account for the extraction velocity $\mathbf{v}$; moreover, the typical double-well potential $W_0(s)=\frac14(s^2-1)^2$ was also modified to allow modeling a \emph{meniscus}, the phenomenon generated by the asymmetry between adhesion and cohesion forces between a liquid and a solid.

By including these elements in the Cahn--Hilliard equation, the following model for the Langmuir--Blodgett films in a square domain of side length $L>0$ was presented in \cite{wilczek}
\begin{equation}\label{eq1}
\begin{cases}
c_t = \div \big(\nabla (-\Delta c-c+c^3+\nu \zeta(\bx))-\bv c\big) & \text{for $(\bx,t)\equiv(x,y,t)\in(0,L)^2\times(0,+\infty)$,}\\
c(0,y,t) = c_0 & \text{for $(y,t)\in [0,L]\times(0,+\infty)$,}\\
c_x(L,y,t)=c_{xx}(0,y,t)=c_{xx}(L,y,t)=0 & \text{for $(y,t)\in[0,L]\times(0,+\infty)$,}\\
c(x,0,t)=c(x,L,t) & \text{for $(x,t)\in[0,L]\times(0,+\infty)$,}\\
\end{cases}
\end{equation}
where $c\colon(0,L)^2\times(0,+\infty)\to\mathbb{R}$ describes the time-dependent concentration of the amphiphiles ($c=-1$ corresponding to the disordered phase and $c=1$ to the ordered one), $\zeta\colon(0,L)^2\to(-1,0)$ describes the profile of the meniscus, and $\mathbf{v}$ is the extraction velocity. 
In \cite{wilczek}, the explicit choice 
$$\zeta(\mathbf{x})= -\frac{1}{2}\bigg[1+\tanh\bigg(\frac{x-x_{\text{mns}}}{l_{\text{mns}}}\bigg)\bigg]$$
was made, modeling a meniscus at $x=x_{\text{mns}}$ of thickness $l_{\text{mns}}$\,; despite this specific example, it will always required to be analytic.
The vertical axis is denoted by~$x$, the horizontal one by~$y$, and with this convention, the extraction velocity $\mathbf{v}=(\beta,0)$ ($\beta>0$) points upward; the first boundary condition models the constant concentration at the liquid free surface; the second boundary conditions are designed to model the fact that the boundary at $x=L$ has no impact on the outflow of the concentration; finally, the last boundary condition encodes periodicity in the horizontal direction.

By denoting the potential by 
\begin{equation}\label{df-W}
W(\bx,s)=W_0(s) +\nu \zeta(\bx) s,
\qquad\text{where}\qquad
W_0(s)\coloneqq \frac14(s^2-1)^2,
\end{equation}
the first equation in \eqref{eq1} can be written as
$$c_t=\div\big(\nabla(-\Delta c+\partial_s W(\bx,c))-\bv c\big)\eqqcolon \div\big(\nabla \mu-\bv c\big),$$
where we have defined the \emph{chemical potential} $\mu\coloneqq -\Delta c+\partial_s W(\bx,c)$.

The one-dimensional version of \eqref{eq1} was studied in
\cite{BonDavMor2018}, where the change of variables $u(x,t)=c(x,t)-c_0$ was performed, yielding, see \cite[equation (2.2)]{BonDavMor2018},
\begin{equation}\label{eq2}
\begin{cases}
u_t = \big(-u_{xx}+(u+c_0)-(u+c_0)^3+\zeta(x)\big)_{xx}-\beta u_x & \text{for $(x,t)\in(0,L)\times(0,T)$,}\\
u(0,t)=u_x(L,t)=\mu(0,t)=\mu_x(L,t)=0 & \text{for $t\in(0,T]$,}\\
u(x,0)=u_0(x) & \text{for $x\in(0,L)$;}
\end{cases}
\end{equation}
the coefficient $\nu$ is taken to be equal to $1$, and the chemical potential $\mu$ reads, in this case,
\begin{equation}\label{df-mu}
\mu=\mu(u) =  -u_{xx} + (u+c_0)^3 - (u+c_0) + \zeta.    
\end{equation}

In \cite[Theorem~2.4]{BonDavMor2018}, the existence, uniqueness of weak solutions to \eqref{eq2}, and their regularity was studied.
Moreover, the authors established the existence of a global attractor when $\beta < L^{-3}$ (see \cite[Theorem~6.5]{BonDavMor2018}).  
However, a number of interesting questions linger:
\begin{itemize}
\item[(i)] Do solutions to \eqref{eq1} stabilize? In other words, do solutions approximate, for large times, a steady state solution?
\item[(ii)] Is it possible to extend the results of \cite{BonDavMor2018} to the original two-dimensional model of \cite{wilczek}? Is it possible to prove their stabilization? This latter question, which is possibly the most relevant for the physical application to the LB transfer, requires a thorough analysis of the set of steady states of \eqref{eq1},  which is not available at present or a new idea.
\end{itemize}
Question (i) above is particularly interesting when one can leverage the results obtained for $\beta=0$ (situation in which problem~\eqref{eq2} is a Cahn--Hilliard equation with potential~$W$ given by~\eqref{df-W}) to the case $\beta>0$, for which~\eqref{eq2} features the non-trivial advective term $-\beta u_x$\,.

In the present paper, we will concentrate on giving an affirmative answer to the first question. Our main result is the following.
\begin{theorem}\label{thm_main}
There exist a positive number $\Lambda_0$ and an
at most countable set $E\subset [\Lambda_0,\infty)$ such that, if $L\in (0,\infty)\setminus E$, then there is $\beta^*=\beta^*(L)>0$
with the following properties. If   $u_0\in \{u\in H^1(0, L)\colon u(0)=0\}$ and $u$ is the corresponding solution to \eqref{eq2} with $\beta\in[0, \beta^*]$, then there exists $u^\infty\in C^\infty(0,L)$ such that for all $m\in \bN$ we have 
\begin{equation}\label{eq3}
\lim_{t\to+\infty} \| u(t) - u^\infty  \|_{H^m(0,L)} =0.
\end{equation}
\end{theorem}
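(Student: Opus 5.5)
The plan follows the strategy announced in the abstract: show that for $\beta=0$ problem \eqref{eq2} is a gradient system with finitely many hyperbolic equilibria, and then transfer this structure to small $\beta>0$ through the Carvalho--Langa--Robinson perturbation result for gradient-like semigroups.

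\textbf{The unperturbed problem.} For $\beta=0$, \eqref{eq2} is the $H^{-1}$-gradient flow of
\[
\cE(u)=\int_0^L \Big(\tfrac12 u_x^2 + W(x,u+c_0)\Big)\,\de x
\]
on $\{u\in H^1(0,L)\colon u(0)=0\}$, with the boundary conditions inherited from \eqref{eq2}. A direct computation gives
\[
\frac{\de}{\de t}\cE(u(t)) = -\norm{\mu_x}_{L^2}^2 ,
\]
so $\cE$ is a strict Lyapunov function. Its equilibria solve the fourth-order ODE $\mu_{xx}=0$ with $\mu(0)=\mu_x(L)=0$, which forces $\mu\equiv 0$. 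Hence the equilibria are the solutions of the second-order analytic boundary value problem
\[
-u_{xx}+(u+c_0)^3-(u+c_0)+\zeta = 0, \qquad u(0)=0,\quad u_x(L)=0 .
\]
I would parametrize these by the shooting parameter $p=u_x(0)$. Because $\zeta$ is analytic, the map $(p,L)\mapsto u_x(L;p)$ is analytic. The zeros of this map for fixed $L$ are the equilibria, and nondegeneracy of an equilibrium corresponds to the zero being simple.

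An analytic-dependence argument in $L$ should then show the following. The set $E$ of lengths at which some zero is degenerate is at most countable. It is contained in $[\Lambda_0,\infty)$ because for small $L$ the Poincaré-type inequality makes the linearization coercive and the equilibrium unique. A priori bounds on steady states keep $p$ in a compact set, so for $L\notin E$ there are only finitely many equilibria, and all of them are hyperbolic.

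\textbf{Main obstacle.} This countability/genericity step is where I expect the real work. It requires showing that degenerate zeros cannot persist on an interval of $L$. What I would try first is to use analyticity of the discriminant-type function built from the shooting map, combined with a nondegeneracy check as $L\to0$, and, if needed, a Sard-type argument applied to the map $(p,L)\mapsto u_x(L;p)$.

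\textbf{Perturbation to small $\beta$.} Next I invoke the Carvalho--Langa--Robinson theorem: a gradient semigroup with a finite set of hyperbolic equilibria is stable under small perturbations. To apply it, I need to check three things.
\begin{enumerate}
\item The semigroups $S_\beta(t)$ have global attractors $\cA_\beta$, uniformly bounded for $\beta\le\beta_0<L^{-3}$; this comes from \cite[Theorem~6.5]{BonDavMor2018}.
\item $S_\beta(t)\to S_0(t)$ uniformly on bounded sets and on compact time intervals. This follows from an energy estimate on the difference of solutions, since $\beta u_x$ is a lower-order term.
\item Hyperbolic equilibria persist, together with local unstable manifolds, by the implicit function theorem.
\end{enumerate}
The conclusion is that for $\beta\in[0,\beta^*]$ the semigroup $S_\beta$ is dynamically gradient. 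It has finitely many equilibria $u^*_{1,\beta},\dots,u^*_{k,\beta}$ with no homoclinic structures, and every trajectory has its $\omega$-limit set contained in a single equilibrium.

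\textbf{Convergence and upgrade to $H^m$.} The $\omega$-limit set of $u_0$ is nonempty, compact, connected and invariant, and it is contained in the finite set of equilibria. It is therefore a single point $u^\infty$, which gives convergence in $H^1$. Parabolic smoothing yields bounds in $H^m$ for $t\ge1$ for every $m$, since the trajectory is bounded and the equation is semilinear with an analytic nonlinearity. Interpolating between $H^1$ convergence and these $H^{m+1}$ bounds gives \eqref{eq3}. Smoothness of $u^\infty$ follows from elliptic regularity for the steady-state ODE.
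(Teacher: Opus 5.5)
Your outline follows the paper's architecture: equilibria as zeros of $\mu$, an analytic shooting map whose simple zeros are the nondegenerate equilibria, continuation in $\beta$ by the implicit function theorem, Theorem~\ref{thmRob}, connectedness of $\omega$-limit sets, and interpolation. The gap is the step you flag as the main obstacle, the countability of $E$, and neither tool you propose closes it.

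\textbf{Discriminant route.} Your first idea is the route of the paper's proof of Theorem~\ref{cor:discreteL}, and it does not work as stated.
The Weierstrass polynomial, and hence its discriminant, exists only near one point $(L_*,p_*)$. The discriminant also vanishes when the repeated roots are complex. So its vanishing near $L_*$ cannot be carried down to small $L$ to contradict uniqueness there.
In fact the available ingredients (analyticity, a priori bounds on $p$, finitely many zeros for each $L$, a unique simple zero for small $L$) are compatible with a whole interval of degenerate lengths. For suitable constants $c,L_1>0$, the function $f(L,p)=p\,\bigl((p-c)^2-(L-L_1)\bigr)^2$ has only the simple zero $p=0$ for $L<L_1$. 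For every $L>L_1$ it has double zeros $c\pm\sqrt{L-L_1}$.

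\textbf{Sard route.} Sard's theorem does not help either. It gives at best measure zero, not countability, and only if $0$ is a regular value of $\Phi(L,p)\coloneqq u_x(L;p)$. Once regularity is known, analyticity alone already shows that the projection of the curve $\Phi^{-1}(0)$ onto the $L$-axis has isolated critical points, since a vertical piece would give infinitely many equilibria for one $L$.

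\textbf{What is missing.} You need an argument specific to the equation ruling out that $\Phi$ vanishes together with its gradient along a curve. Concretely, with $h=\partial_p u$, you must exclude that $u_x(L)=u_{xx}(L)=h_x(L)=0$ holds along a one-parameter family of steady states. Without this, your argument (and the paper's) gives stabilization only for $L$ outside the set of degenerate lengths, whose size is not controlled.

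\textbf{Smaller points.} Theorem~\ref{thmRob} has two hypotheses you do not fully address.
\begin{itemize}
\item Hypothesis (b) requires collective asymptotic compactness. It follows from absorbing-ball and smoothing estimates that are uniform in $\beta$ (see Lemma~\ref{lem:part-b}). That uniformity has to be derived; it cannot simply be quoted from \cite[Theorem~6.5]{BonDavMor2018}.
\item Hypothesis (d) requires an isolating radius independent of $\beta$. The implicit function theorem only provides the branch $u^j_\beta$. Since $-\beta h_x$ makes the linearization non-self-adjoint, hyperbolicity of $u^j_\beta$ needs a resolvent bound uniform on a strip around the imaginary axis, as in Step~2 of Lemma~\ref{part-d}.
\end{itemize}

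\textbf{Small $L$.} Here your argument is cleaner than the paper's implicit-function argument at $L=0$ (Proposition~\ref{p-uni}). Since $W_0''\ge -1$ and $\int_0^L h_x^2\ge (\pi/(2L))^2\int_0^L h^2$ whenever $h(0)=0$, the energy $\mathcal J$ is uniformly convex on $V$ for $L<\pi/2$. This gives uniqueness and nondegeneracy at once, with the explicit value $\Lambda_0=\pi/2$.
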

We stress that the notion of solution has to be clarified and we do this later (see Definition~\ref{df-weak} and formula \eqref{r-cont}). The set $E$, which is defined in~\eqref{def_E} below, of exceptional sample lengths lacks explicit characterization. 
The origin of its presence {comes from the definition of \emph{branch points} (see Definition~\ref{def:branchpoint}), which are those for which the Implicit Function Theorem cannot be applied.
Fortunately, Theorem~\ref{cor:discreteL} ensures that $E$ is discrete and therefore negligible.

In order to prove Theorem~\ref{thm_main} we will show that, for sufficiently small $\beta>0$, equation \eqref{eq2}, which is a perturbation of a gradient flow of the energy functional
$$
\mathcal{J}(u) = \int_0^L \bigg(\frac12 (u_x)^2 + W(x,u+c_0)\bigg)\,\de x,
$$
is still a gradient flow, albeit, in a weaker sense, see \cite{CLR} or \cite{RW23}. 
This weaker notion is based on the structure of the global attractor and the lack of homoclinic structures. 
Once we show this, it suffices to check that there is only a finite number of steady state solutions to \eqref{eq2}. 
The strategy of the proof is borrowed by that in \cite{RW23}, where it was applied to study the stabilization of a sixth-order Cahn--Hilliard-type equation featuring a convective term.

We start our work with the  analysis of steady states of \eqref{eq2} when $\beta=0$. 
We first estimate the number of the steady states for any $L>0$ by means of the shooting method, see Theorem \ref{t201}. 
We separately show that there exists $L_0>0$ such that for all $L\in (0, L_0)$ there is a unique steady state of \eqref{eq2}, see Proposition \ref{p-uni}.

Then we prove that if $v^1_0, \ldots, v_0^{N(L)}$ is the family of all steady states of \eqref{eq2} with $\beta=0$, then each $v_0^i$ belongs to a family $v_\beta^i$ for small $\beta\ge 0$. 
A natural tool for proving this is the Implicit Function Theorem, see \cite{NirenbergBook}, which requires triviality of the kernel of the linearization of the right-hand side of \eqref{eq2} with $\beta=0$. 
This is the main technical challenge in the study of the problem with $\beta=0$, which we resolve in Propositions \ref{prop:IFT-branch} and \ref{lem:equivalence} and in Theorem \ref{cor:discreteL}. 
Remarkably, in the course of its proof, we invoke the Weierstrass Preparation Theorem, see \cite{loja}.

The second step is to invoke \cite[Theorem 5.26]{CLR} to conclude that \eqref{eq2} is still a gradient flow for small $\beta$. 
Since this theorem is written in the language of the strongly continuous semigroups, we rewrite the existence result of \cite{BonDavMor2018} in this language, which, in particular, proves to be very useful when we have to establish the asymptotic collective compactness of families of semigroups $\{ S_\beta\}_{\beta\geq 0}$ generated by \eqref{eq2}. 
On this occasion, we prove some estimates on a modified energy functional (see \eqref{E1}) by using some ideas from \cite{eden} that were also employed in a similar context in \cite{KNR2016}.
This yields estimates that are uniform in time and in $\beta$ on the weak solutions to \eqref{eq2}.

The last step to prove stabilization is the application of \cite[Theorem~5.26]{CLR}, which we state in Theorem~\ref{thmRob} below for the reader's convenience. 
This theorem provides a check-list of four conditions to be satisfied to obtain the stability of gradient semigroups. We devote Section~\ref{stabilization} to showing that our problem satisfies these conditions.

We notice that the proof strategy that we use is intrinsically one-dimensional and cannot be easily established for the original two-dimensional problem~\eqref{eq1}. 
In particular, the question of counting the number of steady states of the two-dimensional elliptic problem~\eqref{eq1} is much more difficult, see, \emph{e.g.}, \cite{alikakos}. 
Because of these reasons, the analysis of~\eqref{eq1} is left for future work. 

\smallskip

The plan of the paper is the following.
In Section~\ref{betazero}, we study the steady states in the non advective case, namely problem~\eqref{eq2} for $\beta=0$ (see \eqref{sys_beta=0} below). 
The main result for this case is Theorem~\ref{t201}, where we prove that \eqref{r-stat} 
has a finite number of solutions.
The brief Section~\ref{smallbeta} deals with the small-$\beta$ continuation: by continuity (in fact analyticity) the results of Section~\ref{betazero} can be extended to~\eqref{eq2} with $\beta\in(0,\beta_0)$, where the existence of $\beta_0>0$ is proved in Theorem~\ref{thm:IFT}.
In Section~\ref{semigroupBG}, we translate the results in the language of analytic semigroup theory, to prepare the ground for the application of \cite[Theorem~5.26]{CLR}.
Finally, in Section~\ref{stabilization}, we prove stabilization of solutions.

\section{The case $\beta = 0$}\label{betazero}
We begin by studying problem~\eqref{eq2} in the regime $\beta=0$, namely
\begin{equation}\label{sys_beta=0}
    \begin{cases}
    u_t = \big( -u_{xx} + (u+c_0)^3 - (u+c_0) + \zeta \big)_{xx} & \text{for $(x,t)\in(0,L)\times(0,T)$,}\\[2pt]
    u(0,t) = u_{x}(L,t) = 0 & \text{for $t\in(0,T)$,}\\[2pt]
    \mu(0,t) = \mu_x (L,t) = 0 & \text{for $t\in(0,T)$,}
    \end{cases}
\end{equation}
where $\mu$ is defined by \eqref{df-mu}.
We have to address different aspects of   
solutions to the associated steady-state problem, that is
\begin{equation}\label{r-stat}
 \begin{cases}
    0= \big( -u_{xx} + (u+c_0)^3 - (u+c_0) + \zeta \big)_{xx} & \text{for $x\in(0,L)$,}\\[2pt]
    u(0) = u_{x}(L) = 0, \\[2pt]
    \mu(0) = \mu_x (L) = 0.
    \end{cases}
\end{equation}
We start with a preliminary lemma.
\begin{lemma}\label{lemma_0}
A function $u$ is a solution to~\eqref{r-stat} if and only if it is a solution to
\begin{equation}\label{r-red}
 \begin{cases}
    0= -u_{xx} + (u+c_0)^3 - (u+c_0) + \zeta  \equiv \mu & \text{for $x\in(0,L)$,}\\[2pt]
    u(0) = u_{x}(L) = 0 .
    \end{cases}
\end{equation}
Namely, $u$ is steady state if and only if the chemical potential vanishes, $\mu=0$.
\end{lemma}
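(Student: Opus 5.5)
The equivalence follows by integrating the steady-state equation twice and using the boundary conditions on $\mu$. Throughout I work with classical solutions. Since $\zeta$ is analytic, a weak solution of either problem can be bootstrapped to smoothness, so the manipulations below are legitimate.

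\emph{From \eqref{r-red} to \eqref{r-stat}.} Suppose $u$ solves \eqref{r-red}. Then $\mu\equiv 0$ on $(0,L)$, so $\mu_{xx}=0$. Moreover $\mu(0)=0$ and $\mu_x(L)=0$ hold trivially, and the conditions $u(0)=u_x(L)=0$ are shared by both problems. Hence $u$ solves \eqref{r-stat}.

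\emph{From \eqref{r-stat} to \eqref{r-red}.} Suppose $u$ solves \eqref{r-stat}. The first equation reads $\mu_{xx}=0$ on $(0,L)$, so $\mu$ is affine: $\mu(x)=a+bx$. The condition $\mu_x(L)=0$ forces $b=0$, and then $\mu(0)=0$ forces $a=0$. Thus $\mu\equiv 0$, and together with $u(0)=u_x(L)=0$ this is exactly \eqref{r-red}.

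The only point needing care is regularity: we need $\mu\in C^1([0,L])$ so that the boundary values of $\mu$ and $\mu_x$ make sense. If \eqref{r-stat} is understood only distributionally, $\mu_{xx}=0$ in $\mathcal D'(0,L)$ still implies that $\mu$ is affine (a distribution with vanishing second derivative is a polynomial of degree at most one). So the argument goes through as soon as the traces of $\mu$ and $\mu_x$ in the boundary conditions are meaningful. I therefore expect no substantial obstacle.
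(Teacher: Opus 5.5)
Your proof is correct and is essentially the paper's argument: $\mu_{xx}=0$ forces $\mu$ to be affine, the conditions $\mu(0)=\mu_x(L)=0$ kill both coefficients, and the converse is immediate. Your added remark that a distribution with vanishing second derivative is affine is a harmless extra that the paper leaves implicit.
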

\begin{proof}
If $u$ solves~\eqref{r-red}, then it is immediate to see that it solves~\eqref{r-stat}. 
Viceversa, integration of the equation in~\eqref{r-stat} twice over $(0,x)$ yields that the chemical potential is an affine function, that is 
$\mu(x) = a x +b$, for $a,b\in\mathbb{R}$.
Now, the boundary conditions \eqref{r-stat}$_3$ imply that $a= b=0.$
\end{proof}

We  have to address various aspects of solutions to \eqref{r-red}. In particular, we need estimates on the derivative $u_x$\,.  
We have to bound the number of steady states and we have to expose their dependence on the length $L$ of the domain.  
We prove estimates on $u_x(0)$ which are uniform in~$L$ and will be important for further analysis.

\begin{lemma}\label{LMnew}
If $u\in C^2([0,L])$ is a solution of \eqref{r-red}, then
\[
\sqrt2 z_\ell \coloneqq-|c_0^2-1|
\le \sqrt2\,u_x(0) \le \sqrt{5-4c_0 +(c_0^2-1)^2} 
\eqqcolon \sqrt2 z_r
\,.
\]
\end{lemma}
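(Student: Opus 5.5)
The plan is a first-integral (energy) argument for the ODE \eqref{r-red}, applied only on the initial interval where $u$ is monotone, with a case split on the sign of $u_x(0)$. Write $v=u+c_0$, so that $v(0)=c_0$ and $v_{xx}=W_0'(v)+\zeta$, where $W_0'(s)=s^3-s$. Multiplying by $v_x=u_x$ gives
\[
\tfrac12\bigl(u_x^2\bigr)_x=\bigl(W_0(v)\bigr)_x+\zeta\,u_x .
\]
If $u_x(0)=0$ there is nothing to prove, because $z_\ell\le 0\le z_r$. Otherwise let $x_1\in(0,L]$ be the first zero of $u_x$; it exists because $u_x(L)=0$ and $u\in C^2$. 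On $[0,x_1)$ the derivative $u_x$ has the constant sign of $u_x(0)$. Integrating the identity over $(0,x_1)$ and using $u_x(x_1)=0$ yields
\[
\tfrac12 u_x(0)^2 = W_0(c_0)-W_0\bigl(v(x_1)\bigr)-\int_0^{x_1}\zeta\,u_x\,\de x .
\]

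\emph{Case $u_x(0)<0$.} On $(0,x_1)$ we have $u_x<0$ and $\zeta<0$, so $\zeta u_x\ge0$ and the integral term is nonpositive. Since $W_0\ge0$, we get $\tfrac12u_x(0)^2\le W_0(c_0)=\tfrac14(c_0^2-1)^2$. Hence $\sqrt2\,|u_x(0)|\le|c_0^2-1|$, which is the lower bound.

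\emph{Case $u_x(0)>0$.} Now $u$ is increasing on $[0,x_1]$. Since $|\zeta|<1$,
\[
-\int_0^{x_1}\zeta u_x\,\de x\le\int_0^{x_1}u_x\,\de x=u(x_1)=v(x_1)-c_0 .
\]
Setting $s=v(x_1)$, this gives
\[
\tfrac12u_x(0)^2\le \tfrac14(c_0^2-1)^2-c_0+\bigl(s-W_0(s)\bigr).
\]
It then remains to prove the elementary inequality $\sup_{s\in\bR}\bigl(s-\tfrac14(s^2-1)^2\bigr)\le \tfrac54$. Multiplying the resulting bound by $2$ gives $2u_x(0)^2\le(c_0^2-1)^2+5-4c_0$, which is the upper bound $\sqrt2\,u_x(0)\le\sqrt2 z_r$.

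The only non-routine point is this scalar inequality. For $s\le 5/4$ it is immediate, because $W_0\ge0$. For $s>5/4$ I would show that $\tfrac14(s^2-1)^2-s+\tfrac54$ is positive by checking it at its critical point, which satisfies $s^3-s=1$, i.e. $s\approx1.32$. Alternatively, one can use $(s^2-1)^2\ge 4(s-1)^2\cdot$ (a lower bound on $(s+1)^2/4$) to reduce to a quadratic. I expect this bookkeeping of constants, rather than the energy identity itself, to be the main place requiring care.
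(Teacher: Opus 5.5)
Your proposal is correct and follows the paper's proof essentially step by step. Both arguments use the first integral obtained by multiplying \eqref{r-red} by $u_x$, evaluate it at the first zero of $u_x$, split on the sign of $u_x(0)$ using $-1\le\zeta\le 0$, and reduce the upper bound to the scalar inequality $(s^2-1)^2-4s+5\ge 0$. The only difference is that the paper proves this inequality in one line via the identity $(s^2-1)^2-4s+5=(s^2-2)^2+2(s-1)^2$, whereas you use a critical-point check; that check also works, since the minimiser $s_0$ satisfies $s_0^3-s_0=1$ with $s_0<1.33$, and the minimum value $\tfrac{1}{4s_0^2}-s_0+\tfrac54$ is positive.
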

\begin{proof}
Multiplying \eqref{r-red} by $u_x$ and integrating from $0$ to $x\in[0,L]$ yields the first integral
\begin{align}
2u_x^2(x)
&= 2u_x^2(0)
    + (u(x)+c_0)^4 - 2(u(x)+c_0)^2 
    - c_0^4 + 2c_0^2
    + 4\int_0^x u_x(s)\zeta(s)\,\de s \nonumber\\
&=
 2u_x^2(0)
    + \big((u(x)+c_0)^2 - 1\big)^2
    - (c_0^2-1)^2
    + 4\int_0^x u_x(s)\zeta(s)\,\de s .
\label{eq:first_integral_corrected}
\end{align}
Let
\[
x_* \coloneqq \inf\{x\in(0,L]:\,u_x(x)=0\}.
\]
This is well-defined since $u_x$ is continuous and $u_x(L)=0$.

\medskip\noindent
\emph{Lower bound.}
If $u_x(0)\ge 0$, then $\sqrt2\,u_x(0)\ge 0\ge -|c_0^2-1|$ and we are done.
Assume $u_x(0)<0$. Then by definition of $x_*$ we have $u_x<0$ on $[0,x_*)$, hence
$u_x(s)\zeta(s)\ge 0$ for every $s\in(0,x_*)$ because $\zeta\le 0$.
Evaluating \eqref{eq:first_integral_corrected} at $x=x_*$ and using $u_x(x_*)=0$ gives
\[
0=2u_x^2(x_*) \ge 2u_x^2(0) - (c_0^2-1)^2,
\]
so $2u_x^2(0)\le (c_0^2-1)^2$, and since $u_x(0)<0$,
\[
\sqrt2\,u_x(0)\ge -|c_0^2-1|.
\]

\medskip\noindent
\emph{Upper bound.}
If $u_x(0)\le 0$, then $\sqrt2\,u_x(0)\le 0\le \sqrt{5-4c_0+(c_0^2-1)^2}$ and we are done.
Assume $u_x(0)>0$. Then $u_x>0$ on $[0,x_*)$. Using $-1\le \zeta\le 0$ we infer, for any
$x\in(0,x_*]$,
\[
4\int_0^x u_x(s)\zeta(s)\,\de s \;\ge\; -4\int_0^x u_x(s)\,\de s \;=\; -4u(x),
\]
where we used $u(0)=0$ in the last identity.
Insert this estimate into \eqref{eq:first_integral_corrected} and evaluate at $x=x_*$ to obtain
\begin{equation}\label{eq:upper_aux}
0=2u_x^2(x_*)
\ge 2u_x^2(0) + \big((u(x_*)+c_0)^2-1\big)^2 - (c_0^2-1)^2 -4u(x_*).
\end{equation}
Set $t \coloneqq u(x_*)+c_0$ (so $u(x_*)=t-c_0$). Then the sum of the second and last term in the right-hand side of \eqref{eq:upper_aux} become
\[
(t^2-1)^2 -4(t-c_0) = \big((t^2-1)^2-4t+5\big) + 4c_0 -5.
\]
The key point is the global algebraic inequality (valid for all $t\in\R$)
\begin{equation}\label{eq:sos_poly}
(t^2-1)^2-4t+5
= t^4-2t^2-4t+6
= (t^2-2)^2 + 2(t-1)^2
\;\ge\; 0.
\end{equation}
Using \eqref{eq:sos_poly} in \eqref{eq:upper_aux} therefore yields
$0\ge 2u_x^2(0) + (4c_0-5) - (c_0^2-1)^2$, \emph{i.e.},
\[
2u_x^2(0)\le (c_0^2-1)^2 -4c_0 + 5.
\]
Since $u_x(0)>0$, taking square roots gives
\[
\sqrt2\,u_x(0)\le \sqrt{5-4c_0+(c_0^2-1)^2},
\]
which is the desired upper bound.
\end{proof}

In order to expose the dependence of solutions to \eqref{r-red} on $L$ we will rewrite this problem in a fixed domain. For this purpose we 
set \(x=Ly\) with \(y\in[0,1]\) and \(\tilde v(y) \coloneqq u(Ly)+c_0\). 
The steady problem \eqref{r-red} becomes
\begin{equation}\label{eq:adim-BVP}
\begin{cases}
 -\tilde v_{yy} + L^2\big( \tilde v^3 - \tilde v + \tilde\zeta(y,L)\big) = 0, & \text{for $y\in(0,1)$,} \\[2pt]
 \tilde v(0)=c_0,\qquad \tilde v_y(1)=0,
\end{cases}
\end{equation}
where \(\tilde\zeta(y,L) \coloneqq \zeta(Ly)\). Of course, problems \eqref{r-red} and \eqref{eq:adim-BVP} are equivalent.

We will use  the shooting method.  For this purpose we set up the initial value problem for the first equation in \eqref{eq:adim-BVP},
\begin{equation}\label{eq:adim-IVP}
\begin{cases}
 -\tilde v_{yy}(y) + L^2\big(\tilde v(y)^3 - \tilde v(y) + \tilde\zeta(y,L)\big) = 0, & \text{for $y\in (0,1)$,}\\[2pt]
 \tilde v(0)=c_0,\qquad \tilde v_y(0)=\tilde z.
\end{cases}
\end{equation}

In order to proceed, we have to make sure that solutions to \eqref{eq:adim-IVP} exist on $[0,1]$. This is the content of the lemma below.

\begin{lemma}\label{lem:persistence}
For any value of $c_0$ and $\tilde z\in [Lz_\ell\,, L z_r]$ there exists a unique solution $\tilde v(\cdot;L,\tilde z)$ to  \eqref{eq:adim-IVP} on $[0,1]$. 
Moreover, if $\tilde v$ has a maximum in $[0,1)$ or $\tilde v_y(1) =0$ when $\tilde v_{y}(0)>0$, (respectively $\tilde v$ has a minimum in $[0,1)$ or $\tilde v_y(1) =0$ when $\tilde v_{y}(0)<0$), then we have the following bounds,
\[
\sup_{y\in[0,1]}|\tilde v(y;L,\tilde z)|\le \max\{ 1, v_M\},\qquad
\sup_{y\in[0,1]}|\partial_y\tilde v(y;L,\tilde z)|\le 
L\max\{ |z_\ell|, z_r\} + L^2 \max\{ 1, v_M\}.
\]
where $v_M>0$ is the (only real) solution to $v_M^3 - v_M = 1$.
\end{lemma}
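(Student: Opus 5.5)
Local existence and uniqueness for \eqref{eq:adim-IVP} follow from Picard--Lindel\"of, since the right-hand side $L^2(\tilde v^3-\tilde v+\tilde\zeta(y,L))$ is smooth in $\tilde v$ and continuous in $y$; $\zeta$ is analytic, hence bounded on $[0,L]$. The plan is then to rule out blow-up before $y=1$ and to derive the stated bounds, both through an energy identity in the rescaled variables. Multiplying the equation by $\tilde v_y$ and integrating over $(0,y)$ gives
\[
\tilde v_y^2(y)=\tilde z^2+\tfrac{L^2}{2}\big[(\tilde v^2(y)-1)^2-(c_0^2-1)^2\big]+2L^2\int_0^y\tilde\zeta\,\tilde v_y\,\de s ,
\]
which is the rescaled form of \eqref{eq:first_integral_corrected}. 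Because $|\tilde\zeta|\le1$, the last term is at most $2L^2\int_0^y|\tilde v_y|$.

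\textbf{Global existence.} A maximal solution can fail to reach $y=1$ only if $\tilde v$ or $\tilde v_y$ blows up. I will rule this out using the sign of the nonlinearity, which is superlinear but has the good sign ($+\tilde v^3$ on the side of $-\tilde v_{yy}$). With $E(y)=\tfrac12\tilde v_y^2-\tfrac{L^2}{4}(\tilde v^2-1)^2$, the identity above gives $E'=L^2\tilde\zeta\tilde v_y$, which is bounded by $L^2|\tilde v_y|$. This alone does not prevent blow-up, since $E$ is not coercive. A cleaner route is to observe that $\tilde v_{yy}=L^2(\tilde v^3-\tilde v+\tilde\zeta)$ has the sign of $\tilde v$ for $|\tilde v|$ large, so large values tend to escape. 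Here I expect a genuine difficulty: solutions of $v''=v^3$ do blow up in finite time (for example $v\sim \sqrt2/(y_0-y)$). The existence claim on all of $[0,1]$ for every $\tilde z$ in the range therefore needs care, and I suspect it relies on the restriction $\tilde z\in[Lz_\ell,Lz_r]$ together with the qualitative hypothesis. My plan is to handle the case with the stated hypothesis (existence of a maximum in $[0,1)$, or $\tilde v_y(1)=0$), and otherwise to argue by continuation: the estimates below hold a priori on the maximal interval $[0,y^*)$ in which the solution is monotone up to its first critical point. Beyond that critical point, the identity shows that $\tilde v$ turns back and oscillates in a bounded region, so no blow-up can occur.

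\textbf{Bound on $\tilde v$.} Take $\tilde v_y(0)>0$ and let $y_1$ be the first zero of $\tilde v_y$, which exists by the hypothesis. At a maximum we have $\tilde v_{yy}\le0$, so $\tilde v^3-\tilde v\le-\tilde\zeta\le 1$, which forces $\tilde v(y_1)\le v_M$ because $s^3-s$ is increasing for $s\ge v_M$. The symmetric argument at a minimum uses $\tilde v_{yy}\ge0$, so $\tilde v^3-\tilde v\ge-\tilde\zeta\ge0$, which gives $\tilde v\ge-1$ where relevant. On $[0,y_1]$, $\tilde v$ is monotone between $c_0$ and $\tilde v(y_1)$. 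At later extrema the same inequalities hold, so every local maximum is at most $v_M$ and every local minimum is at least $-1$, or $-v_M$ by the symmetric argument. Hence $|\tilde v|\le\max\{1,v_M\}$, possibly after also bounding $|c_0|$ by this quantity, a point I would check.

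\textbf{Bound on $\tilde v_y$.} Integrate $\tilde v_{yy}=L^2(\tilde v^3-\tilde v+\tilde\zeta)$ from $0$ to $y$ and use $|\tilde z|\le L\max\{|z_\ell|,z_r\}$. The integrand is controlled via the sup bound on $\tilde v$, with $|\tilde v^3-\tilde v+\tilde\zeta|$ bounded by a constant; the stated form $L^2\max\{1,v_M\}$ suggests the authors use $|\tilde v^3-\tilde v|\le \max\{1,v_M\}$-type bookkeeping, which I would verify. The main obstacle is the global existence and no-blow-up step, which is exactly where the hypotheses on extrema enter.
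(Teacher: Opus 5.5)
Your argument for the bounds is the paper's own. At a critical point reached from an increasing (resp.\ decreasing) stretch, $\tilde v_{yy}\le0$ (resp.\ $\ge0$). The equation then gives $\tilde v^3-\tilde v\le-\tilde\zeta<1$, i.e.\ $\tilde v\le v_M$ (resp.\ $\tilde v^3-\tilde v\ge-\tilde\zeta>0$, i.e.\ $\tilde v>-1$). The derivative bound comes from integrating the equation.

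\textbf{Gap in the global-existence step.} This step cannot be closed, because the existence claim fails in general.

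\emph{Before the first critical point} you have no a priori bound, since $\tilde v\le v_M$ is derived \emph{at} a critical point. If $\tilde v$ increases past $v_M$, then $\tilde v_{yy}\ge L^2(\tilde v^3-\tilde v-1)>0$ and it blows up. Concretely, take $\tilde z=Lz_r$. On an increasing stretch, $2L^2\int_0^y\tilde\zeta\tilde v_y\ge-2L^2(\tilde v-c_0)$, and your first integral together with \eqref{eq:sos_poly} gives
\[
\tilde v_y^2(y)\ \ge\ \tfrac{L^2}{2}\big[(\tilde v^2-2)^2+2(\tilde v-1)^2\big]\ >\ 0 .
\]
By a first-zero argument, $\tilde v_y$ stays positive on the whole maximal interval. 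The solution therefore blows up at $y^*=x^*/L$. Here $x^*<\infty$ is the blow-up point of $u_{xx}=(u+c_0)^3-(u+c_0)+\zeta$, $u(0)=0$, $u_x(0)=z_r$, which does not depend on $L$. So for $L>x^*$ there is no solution on $[0,1]$.

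\emph{After a critical point}, your claim that $\tilde v$ ``turns back and oscillates in a bounded region'' is also false. On a decreasing stretch $E'=L^2\tilde\zeta\tilde v_y>0$, so energy is gained and nothing prevents $\tilde v$ from crossing $-1$. Below $-1$ one has $\tilde v_{yy}=L^2(\tilde v^3-\tilde v+\tilde\zeta)<0$, so $\tilde v$ never turns back and blows up to $-\infty$ at a finite $y$. For $\zeta\equiv-\tfrac12$, the first integral shows this happens after every maximum.

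The paper's proof does not close this either. It treats only solutions already assumed to have the relevant extrema, and it simply asserts that one ``can continue $\tilde v$ past $y_N$ to reach the next extremum''.

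\textbf{What survives.} What can be proved, and what Theorem~\ref{t201} really needs, is maximal local existence for every $\tilde z$, plus the bounds for solutions that exist on $[0,1]$ with $\tilde v_y(1)=0$. That restriction is needed in your bound step too. The conclusion $-1\le\tilde v\le v_M$ on all of $[0,1]$ holds only if every monotone stretch ends at a critical point. With merely ``a maximum in $[0,1)$'', the last stretch up to $y=1$ is uncontrolled and can go below $-1$. Given the bounds for zeros of $f(L,\cdot)$, continuous dependence shows two things: the set of $\tilde z$ whose solution reaches $y=1$ is open, and it contains the zero set, which is compact. That is enough for finiteness.

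\textbf{The constant you meant to verify.} For $\tilde v\in[-1,v_M]$ and $\tilde\zeta\in(-1,0)$, $|\tilde v^3-\tilde v+\tilde\zeta|$ can approach $1+\frac{2}{3\sqrt3}$, which exceeds $v_M$. Direct integration therefore gives the derivative bound with that constant, not with $\max\{1,v_M\}$.
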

\begin{proof}
We will consider only the case $\tilde v_y(0)>0$, since the case $\tilde v_y(0)<0$ can be handled in a similar way and $\tilde v_y(0)=0$ is a limit of both cases and does not bring any additional difficulty.

Set $y_0=0$ and suppose $\tilde v_y(0)>0$; then the function $\tilde v$ is increasing on an interval $[0, y_1]$ and it attains its maximum at $y_1\in (0,1]$\,. 
We claim that $\tilde v_{yy}(y_1)\le 0$, to show which we distinguish two cases: $y_1=1$ and $y_1<1$. 
In the former case, we have that $\tilde v_y\ge0$ on $[0,1]$, hence the condition $\tilde v_y(1)=0$ implies that $\tilde v_y$ attains at $y=1$ its minimum and our claim follows.
In the latter case, we deduce that $\tilde v_{yy}(y_1)\le 0$ and equation \eqref{eq:adim-IVP} implies 
$$
\tilde v(y_1)^3 - \tilde v(y_1) + \tilde\zeta(y_1,L) \le 0.
$$
Since $-1 < c_0< \tilde v(y_1)$ we deduce that $\tilde v(y_1) \le v_M,$ where $v_M$ is a solution to 
$$
v^3_M - v_M = 1.
$$
If $y_1 =1$ our argument is finished, whereas,
if $y_1<1$, we may suppose that $\tilde v$ is decreasing on $[y_1, y_2]$ and it achieves a local minimum at $y_2\in(y_1,1]$. 
If $y_2<1$, then obviously  $\tilde v_{yy}(y_2)\ge 0$. If $y_2=1$, we notice that $\tilde v_y\le 0$ on $[y_1, y_2]$, hence $\tilde v_y$ attains a maximum at $y =1$, hence  $\tilde v_{yy}(x_2)\ge 0$.

Then, equation  \eqref{eq:adim-IVP} yields 
$$
\tilde v(y_2)^3 - \tilde v(y_2) + \tilde\zeta(y_2,L) \ge 0.
$$
Since $\tilde \zeta<0$, we deduce that $ \tilde v$ must be greater than or equal to $v_m<0$, the solution to 
$v^3_m - v_m = 0.
$
Hence $v_m = -1.$

Thus, we see that $\tilde v$ varies on intervals $\{[y_{k-1}, y_{k}]\}_{k=1}^N$ between $-1$ and $v_M$, where $y_k$  and $y_{k+1}$ are consecutive extrema. If $y_N<1$, then we can continue $\tilde v$ past $y_N$ to reach the next extremum. Thus, we conclude that $\tilde v(y) \in [-1, v_M]$ for all $y\in [0, 1]$. 

The bound on $\tilde v_y$ follows from the integration of equation \eqref{eq:adim-IVP}.
\end{proof}

We now define the function $f \colon (0,+\infty) \times \bR \to \bR$ by
\begin{equation}\label{eq:BC-function}
  f(L,\tilde z) \coloneqq \tilde v_y(1;L,\tilde z).
\end{equation}
\begin{remark}\label{rem:analyticity}
    The function $f$ defined in \eqref{eq:BC-function} is analytic in $L>0$ and $\tilde z$.
    This is a consequence of the analyticity of $\zeta$ and of the fact that the nonlinearity in \eqref{eq:adim-IVP} is polynomial.
    Indeed, we can assume that $L,\tilde z\in \bC$ and therefore standard differentiability with respect to parameters yields analyticity of $f$ with respect to $L$ and $\tilde z$. Restricting  $L$ and $\tilde z$ to the real line completes the argument.
\end{remark}
Here is the main observation of this section.

\begin{theorem}\label{t201}
For any given $L>0$ and $c_0$\,, there is a finite number of solutions to \eqref{r-red}. 
They correspond to the zeros of the analytic function $\tilde z\mapsto f(L,\tilde z)$, where $f$ is defined in \eqref{eq:BC-function}.
\end{theorem}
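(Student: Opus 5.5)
The plan is to reduce the count of steady states to counting zeros of a one-variable analytic function on a compact interval.

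\emph{Step 1: correspondence between solutions and zeros.} By Lemma~\ref{lemma_0} and the rescaling $x=Ly$, solutions $u$ of \eqref{r-red} are in bijection with solutions $\tilde v$ of \eqref{eq:adim-BVP}, through $\tilde v(y)=u(Ly)+c_0$ and $\tilde v_y(0)=L u_x(0)$. Lemma~\ref{LMnew} shows that any such solution has $\tilde z\coloneqq\tilde v_y(0)\in[Lz_\ell, Lz_r]$. For $\tilde z$ in this interval, Lemma~\ref{lem:persistence} gives a unique solution $\tilde v(\cdot;L,\tilde z)$ of \eqref{eq:adim-IVP} on $[0,1]$. It solves the boundary value problem exactly when $f(L,\tilde z)=0$. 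Uniqueness for the initial value problem makes $\tilde z\mapsto\tilde v(\cdot;L,\tilde z)$ injective. Hence the steady states correspond bijectively to the zeros of $f(L,\cdot)$ in $I_L\coloneqq[Lz_\ell, Lz_r]$.

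\emph{Step 2: finiteness.} By Remark~\ref{rem:analyticity}, $g\coloneqq f(L,\cdot)$ is real analytic on a neighbourhood of the compact interval $I_L$. Suppose $g$ had infinitely many zeros in $I_L$. By compactness they would accumulate at some point of $I_L$. The identity theorem would then force $g\equiv0$ on the connected open set where it is analytic, and in particular on all of $I_L$.

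\emph{Step 3: excluding $g\equiv 0$ (the main obstacle).} If $g\equiv0$ on $I_L$, then every $\tilde z\in I_L$, including the endpoints, gives a solution of \eqref{eq:adim-BVP}. I would first try to contradict Lemma~\ref{LMnew} by extending beyond the endpoints.
\begin{itemize}
\item The proof of Lemma~\ref{lem:persistence} only used $\tilde v$ reaching $\tilde v_y=0$. It extends existence on $[0,1]$ to a slightly larger open interval $J\supset I_L$ by continuous dependence, and $g$ stays analytic on $J$.
\item The identity theorem then gives $g\equiv0$ on $J$.
\item This produces solutions of \eqref{r-red} with $u_x(0)=\tilde z/L$ outside $[z_\ell,z_r]$, contradicting Lemma~\ref{LMnew}.
\end{itemize}
The delicate point is justifying existence on $[0,1]$ slightly outside $I_L$. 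Since solutions of the cubic ODE with $\tilde z\in I_L$ exist on $[0,1]$ and are bounded, continuous dependence on initial data yields existence on $[0,1]$ for nearby $\tilde z$. This is the standard openness of the set of parameters with global existence, which avoids any a priori bound there.

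If this fails, my fallback is a direct argument at a specific $\tilde z$: take $\tilde z$ large, where the energy identity \eqref{eq:first_integral_corrected} forces $\tilde v_y>0$ throughout. This works on the maximal existence interval where the solution is defined, since the zero set of $g$ being everything is the only thing to rule out. With $g\not\equiv0$, the zeros in $I_L$ are finite, which proves the theorem.
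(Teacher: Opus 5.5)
Your proposal is correct, granted the existence claim of Lemma~\ref{lem:persistence} on all of $[Lz_\ell,Lz_r]$ (on which the paper's proof equally relies), and it follows the paper's route: Lemma~\ref{LMnew} confines the zeros of $f(L,\cdot)$ to a compact interval, and analyticity then gives finiteness. The only addition is that you make explicit why $f(L,\cdot)\not\equiv 0$ (identity theorem on an open interval containing $[Lz_\ell,Lz_r]$, then Lemma~\ref{LMnew} outside it), a step the paper leaves implicit by regarding $f(L,\cdot)$ as analytic on all of $\bR$; your fallback paragraph is not needed.
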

\begin{proof}
Solutions to \eqref{eq:adim-IVP} depend analytically on three parameters $c_0$\,, $L$, and $\tilde z$ (see Remark~\ref{rem:analyticity}). 
Solution to \eqref{r-red} correspond to zeros of the function $\tilde z\mapsto f(L,\tilde z)$. 
From Lemma~\ref{LMnew}, we know that these zeros may belong only to the interval $[Lz_\ell, Lz_r]$, which is compact.
Therefore, analyticity with respect to $\tilde z$ implies that there is only a finite number of them.
\end{proof}

We may be more precise when  the sample length, $L$, small. 
In this case we may prove uniqueness of  steady states.
\begin{proposition}\label{p-uni}
There is $\Lambda_0>0$ such that if $L\in (0, \Lambda_0)$, then there exists a unique solution to \eqref{r-red}.
\end{proposition}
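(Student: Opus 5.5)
Existence and uniqueness for small $L$ split naturally: existence through the shooting function $f$ and the a priori bounds already at hand, uniqueness through a contraction or energy argument that exploits the smallness of $L$.

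\emph{Existence.} Work in the rescaled problem \eqref{eq:adim-BVP}. By Lemma~\ref{LMnew}, every solution has $\tilde z=\tilde v_y(0)\in[Lz_\ell\,,Lz_r]$. I would show that $f(L,Lz_\ell)$ and $f(L,Lz_r)$ have opposite signs for small $L$, and then apply the intermediate value theorem, using the continuity of $f$ from Remark~\ref{rem:analyticity}. Write the equation as $\tilde v_y(y)=\tilde z+L^2\int_0^y(\tilde v^3-\tilde v+\tilde\zeta)\,\de s$. This gives $f(L,\tilde z)=\tilde z+O(L^2)$, provided $\tilde v$ stays bounded on $[0,1]$ uniformly in $L\le1$ and $|\tilde z|\le L\max\{|z_\ell|,z_r\}$. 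That boundedness follows from a Gronwall argument on $[0,1]$, since $|\tilde z|$ is small. Consequently, if $z_\ell<0<z_r$, then $f(L,Lz_\ell)\le Lz_\ell+CL^2<0$ and $f(L,Lz_r)\ge Lz_r-CL^2>0$ for $L$ small.

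Degenerate endpoints need care. One may have $z_\ell=0$ (when $c_0^2=1$), and one should also check that $z_r>0$, which holds because $5-4c_0+(c_0^2-1)^2>0$. In the case $z_\ell=0$, I would shoot instead from $\pm K L$ with $K$ large: the inequality $|f(L,\tilde z)-\tilde z|\le CL^2$ still gives opposite signs at $\tilde z=\pm KL$. This works as long as $C$ does not depend on $K$, and I expect that to follow from the same Gronwall bound. Alternatively, existence follows by minimizing $\mathcal J$ over $\{u\in H^1(0,L):u(0)=0\}$, since $W$ is coercive and the Euler--Lagrange equation with natural boundary condition $u_x(L)=0$ is exactly \eqref{r-red}. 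This variational route is the cleaner one, so I would use it.

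\emph{Uniqueness.} Let $u_1,u_2$ both solve \eqref{r-red}, set $w=u_1-u_2$, multiply the difference of the equations by $w$, and integrate, using $w(0)=0$ and $w_x(L)=0$:
\[
\int_0^L w_x^2\,\de x+\int_0^L\big(q(x)-1\big)w^2\,\de x=0,\qquad q=(u_1+c_0)^2+(u_1+c_0)(u_2+c_0)+(u_2+c_0)^2\ge0 .
\]
The Poincaré inequality for functions vanishing at $0$ gives $\int w^2\le \frac{4L^2}{\pi^2}\int w_x^2$. Hence
\[
\Big(1-\frac{4L^2}{\pi^2}\Big)\int_0^L w_x^2\,\de x\le0,
\]
so $w\equiv0$ whenever $L<\pi/2$. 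This step uses only $q\ge0$, not the bounds of Lemma~\ref{lem:persistence}. It suffices to take $\Lambda_0=\pi/2$, possibly reduced if the existence step needs it; with the variational existence no reduction is needed.

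\emph{Main obstacle.} This argument is essentially routine. The only subtle point is making the existence rigorous, specifically regularity and the natural boundary condition for the minimizer. I would handle it by standard elliptic bootstrap: a weak solution in $H^1$ is $C^2$ by the one-dimensional equation, and the boundary term from integration by parts yields $u_x(L)=0$.
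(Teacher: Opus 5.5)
Your proof is correct, but it takes a different route from the paper's.

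\emph{The paper's route.} The paper rescales to \eqref{eq:adim-BVP}, writes it as $G(L,\tilde v)=0$ on $C^2_{\text{bc}}([0,1])$, and applies the Implicit Function Theorem at $(0,c_0)$. There the linearization is the second derivative with $h(0)=h_y(1)=0$, which is an isomorphism. This gives an analytic branch $L\mapsto\tilde v(L)$ with invertible linearization. By itself, however, it gives uniqueness only inside a $C^2$-neighbourhood of the constant $c_0$. To exclude other solutions one must also know that every solution is forced into that neighbourhood as $L\to0$. That follows from the a priori bounds of Lemmas~\ref{LMnew} and~\ref{lem:persistence}, but the paper leaves it implicit.

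\emph{Your route.} You get existence from the direct method, which works for every $L>0$. Uniqueness comes from the identity $\int_0^L w_x^2+\int_0^L (q-1)w^2=0$ with $q\ge 0$, combined with the sharp Dirichlet--Neumann Poincar\'e constant $(2L/\pi)^2$. In effect you show that $\mathcal J$ is strictly convex on $V$ for $L<\pi/2$. This is more elementary and genuinely global, needs no a priori estimates, and gives the explicit value $\Lambda_0=\pi/2$.

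\emph{Nondegeneracy comes for free.} Replace $q$ by $3(u+c_0)^2$ in the same computation. It shows that $-h_{xx}+(3(u+c_0)^2-1)h=0$ with $h(0)=h_x(L)=0$ has only the trivial solution for $L<\pi/2$. By Proposition~\ref{lem:equivalence}, the zero of $f(L,\cdot)$ is then simple. This nondegeneracy is exactly what part~(b) of the proof of Theorem~\ref{cor:discreteL} invokes, so your argument supplies it as well.

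\emph{What the IFT adds.} Beyond this, the IFT route only gives analytic dependence of the steady state on $L$. On $(0,\pi/2)$ that also follows from Proposition~\ref{prop:IFT-branch} once nondegeneracy is known.

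Finally, your shooting-based existence sketch is superfluous given the variational argument. Also, since the paper works with $c_0\in(-1,0)$, the degenerate case $z_\ell=0$ does not occur.
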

\begin{proof}
Since problems \eqref{r-red} and \eqref{eq:adim-BVP} are equivalent, we deal with \eqref{eq:adim-BVP} which is more manageable.

Let us notice that when $L=0$, then  \eqref{eq:adim-BVP} has a unique solution given by $\tilde{v}(y) = c_0$\,. We shall consider  \eqref{eq:adim-BVP} as a problem 
\begin{equation}\label{r-F}
G(L, \tilde{v}(L)) =0,
\end{equation}
where 
$$
G\colon \bR \times C^2_{\text{bc}}([0,1])\to C([0,1])
$$
and $C^2_{\text{bc}}([0,1])=\{\tilde{v}\in C^2([0,1]): \tilde{v}(0)=c_0,\ \tilde{v}_y(1) =0\}$. 
In order to apply the Implicit Function Theorem, we have to check that $\partial_{\tilde{v}} G(0,c_0)$ is an isomorphism. Indeed, 
$$\frac{\partial G}{\partial \tilde{v}}(0,c_0)h = h_{yy}\,.$$
This operator with the boundary conditions $ h(0)=h_y(1) =0$ has a trivial kernel and it is an isomorphism. Hence, there is $\Lambda_0>0$ such that for all $L\in (-\Lambda_0, \Lambda_0)$ there is a function $(-\Lambda_0, \Lambda_0)\ni L \mapsto \tilde{v}(L) $ is a unique solution to \eqref{r-F}.
\end{proof}

In view of Theorem~\ref{t201}, for each $L>0$, we denote by $\tilde z^1(L),\ldots,\tilde z^{N(L)}(L)$ the zeros of $\tilde z\mapsto f(L,\tilde z)$ and by $\tilde v^1(L),\ldots,\tilde v^{N(L)}(L)$ the corresponding solutions to~\eqref{r-red}.

We now analyze the set \(\{\tilde z^j(L)\}_{j=1}^{N(L)}\) in more detail.
In particular, we partition the pairs $(L,\tilde z^j(L))$ in two classes. 
\begin{definition}[Nondegenerate solution and branch point]\label{def:branchpoint}
Let $L_0>0$ and consider a zero $\tilde z_0 \in \big\{ \tilde z^j(L_0) \big\}_{j=1}^{N(L_0)}$.
\begin{itemize}
\item We call $(L_0,\tilde z_0)$ \emph{nondegenerate} if $\partial_{\tilde z}f(L_0,\tilde z_0)\neq 0$.
\item We call $(L_0,\tilde z_0)$ a \emph{branch point} if $\partial_{\tilde z}f(L_0,\tilde z_0)=0$.
\end{itemize}
\end{definition}

Our aim is to relate nondegeneracy of pairs $(L_0,\tilde z_0)$ to the triviality of the kernel of the linearized operator corresponding to problem~\eqref{r-stat}.
This will be developed in the following results.

We start by considering the linearized problem associated with~\eqref{eq:adim-BVP}.
For a given $L_0>0$, let~$\tilde v_*$ be a solution to \eqref{eq:adim-BVP} for $L=L_0$\,. We set $D(\cM_{L_0}) = \{ u\in H^2(0,1)\colon u(0) = 0 = u_x(1)\}$ and define the linearized operator $\cM_{L_0} \colon D(\cM_{L_0})\subset L^2(0,1) \to L^2(0,1)$ as 
\begin{equation}\label{eq_lin_op}
\cM_{L_0}h \coloneqq -h_{yy} + L_0^2(3\tilde v_*^2(y)-1)h.
\end{equation}
\begin{proposition}[Nondegenerate points yield unique analytic branches]\label{prop:IFT-branch}
Let $L_0>0$ and let $\tilde z_0 \in \big\{ \tilde z^j(L_0) \big\}_{j=1}^{N(L_0)}$.
If $(L_0,\tilde z_0)$ is non degenerate, then there exist $\varepsilon>0$ and a unique real-analytic function
\[
\tilde z \colon (L_0-\varepsilon,L_0+\varepsilon)\to \mathbb R
\]
such that $f(L,\tilde z(L))=0$ for $|L-L_0|<\varepsilon$ and $\tilde z(L_0)=\tilde z_0$\,.
Consequently $L\mapsto \tilde v(\cdot;L,\tilde z(L))\in C^1([0,1])$ is a unique real-analytic branch of steady states near $L_0$\,.
\end{proposition}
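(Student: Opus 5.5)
The plan is to apply the real-analytic Implicit Function Theorem directly to the shooting function $f$ near $(L_0,\tilde z_0)$.

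\emph{Step 1: analyticity and the IFT.} By Remark~\ref{rem:analyticity}, $f$ is real-analytic in $(L,\tilde z)$ on a neighbourhood of $(L_0,\tilde z_0)$. One point must be checked: the solution $\tilde v(\cdot;L,\tilde z)$ of \eqref{eq:adim-IVP} exists on $[0,1]$ for all $(L,\tilde z)$ close to $(L_0,\tilde z_0)$, not only for $\tilde z\in[Lz_\ell,Lz_r]$ as in Lemma~\ref{lem:persistence}. This follows from continuous dependence on parameters. The solution at $(L_0,\tilde z_0)$ is defined and bounded on $[0,1]$. The right-hand side of \eqref{eq:adim-IVP} is locally Lipschitz (indeed analytic) in $(\tilde v,L,\tilde z)$. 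Hence the maximal existence interval of nearby solutions still contains $[0,1]$.

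Since $f(L_0,\tilde z_0)=0$ and $\partial_{\tilde z}f(L_0,\tilde z_0)\neq0$ by nondegeneracy, the analytic Implicit Function Theorem (see \cite{NirenbergBook}) gives $\varepsilon>0$, $\delta>0$ and a unique real-analytic $\tilde z(\cdot)$ on $(L_0-\varepsilon,L_0+\varepsilon)$ with the following properties:
\begin{itemize}
\item $\tilde z(L_0)=\tilde z_0$ and $f(L,\tilde z(L))=0$;
\item every zero of $f$ in $(L_0-\varepsilon,L_0+\varepsilon)\times(\tilde z_0-\delta,\tilde z_0+\delta)$ lies on its graph.
\end{itemize}
Uniqueness in the statement is understood in this local sense, or among continuous branches through $\tilde z_0$.

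\emph{Step 2: analyticity of the branch of steady states.} The map $(L,\tilde z)\mapsto \tilde v(\cdot;L,\tilde z)$ is real-analytic into $C^1([0,1])$. To see this, complexify $L$ and $\tilde z$ as in Remark~\ref{rem:analyticity}. Holomorphic dependence of ODE solutions on parameters holds uniformly on the compact interval $[0,1]$. Consequently $(L,\tilde z)\mapsto(\tilde v,\tilde v_y)$ is holomorphic into $C([0,1])^2$, that is, into $C^1([0,1])$. Composing with the analytic map $L\mapsto(L,\tilde z(L))$ shows that $L\mapsto\tilde v(\cdot;L,\tilde z(L))$ is real-analytic in $C^1([0,1])$.

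Each $\tilde v(\cdot;L,\tilde z(L))$ satisfies $\tilde v(0)=c_0$ and $\tilde v_y(1)=f(L,\tilde z(L))=0$. It therefore solves \eqref{eq:adim-BVP}, which is equivalent to \eqref{r-red}. Uniqueness of the branch follows from the uniqueness of $\tilde z(L)$ together with the shooting correspondence: a steady state is determined by its initial slope $\tilde v_y(0)$. Thus any branch of steady states continuous in $C^1$ and passing through $\tilde v(\cdot;L_0,\tilde z_0)$ has initial slopes that stay in the $\delta$-window, and so coincides with our branch.

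\emph{Main obstacle.} The only delicate point is the uniform, analytic-in-parameters control of ODE solutions in the $C^1$ topology, including existence on the whole of $[0,1]$ for nearby parameters. I would handle it with the standard Cauchy--Kovalevskaya/holomorphic-parameter ODE theorem applied to the first-order system $(\tilde v,w)'=(w,L^2(\tilde v^3-\tilde v+\zeta(Ly)))$, with complex $L$ near $L_0$. For this one needs $\zeta$ to extend holomorphically to a complex neighbourhood of $[0,L_0]$, which follows from the assumed analyticity of $\zeta$.
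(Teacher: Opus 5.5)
Your proposal is correct and follows the paper's own argument: you apply the real-analytic Implicit Function Theorem to the shooting map $f$ at $(L_0,\tilde z_0)$, using $\partial_{\tilde z} f(L_0,\tilde z_0)\neq 0$, and then read off analyticity of $L\mapsto \tilde v(\cdot;L,\tilde z(L))$ from the analytic parameter dependence of Remark~\ref{rem:analyticity}. Your extra checks are sound details that the paper leaves implicit: existence of $\tilde v(\cdot;L,\tilde z)$ on all of $[0,1]$ for nearby parameters via continuous dependence, and the precise local (or continuous-branch) meaning of uniqueness. The first is genuinely needed, since the paper simply declares $f$ defined on $(0,\infty)\times\mathbb{R}$, yet the cubic nonlinearity lets solutions with large initial slope blow up before $y=1$.
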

\begin{proof}
The real-analytic Implicit Function Theorem (see, \emph{e.g.}, \cite[Section~6.1]{KP_book_IFT}) applies to the analytic map $f\colon (0,\infty)\times \mathbb{R}\to\mathbb{R}$ defined in \eqref{eq:BC-function}, at $(L_0,\tilde z_0)$, since $\partial_{\tilde z} f\neq 0$.
Analytic dependence of $L\mapsto \tilde v(\cdot;L,\tilde z(L))$ follows from Remark~\ref{rem:analyticity}.
\end{proof}

In the next proposition, we prove that the loss of nondegeneracy of pairs $(L_0,\tilde z_0)$ corresponds to the nontriviality of the kernel of the operator $\cM_{L_0}$\,.
\begin{proposition}\label{lem:equivalence}
Let $(L_0,\tilde z_0)$ satisfy $f(L_0,\tilde z_0)=0$ and let $\tilde v_*(y) \coloneqq \tilde v(y;L_0,\tilde z_0)$ denote the corresponding steady state, \emph{i.e.}, $\tilde  v_*$ is a solution to~\eqref{eq:adim-BVP}.
Consider the boundary-value problem 
\begin{equation}\label{eq_lin_BVP}
    \begin{cases}
    \cM_{L_0} h=0 & \text{for $y\in(0,1)$},\\[2pt]
    h(0)=0,\quad h_y(1)=0,
    \end{cases}
\end{equation}
for the linearized operator introduced in \eqref{eq_lin_op}.
Then $\partial_{\tilde z} f(L_0,\tilde z_0)=0$ if and only if $\ker \cM_{L_0}\neq \{0\}$.
\end{proposition}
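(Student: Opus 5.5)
The natural approach is to differentiate the shooting solution with respect to the initial slope. Set
\[
w(y) \coloneqq \partial_{\tilde z}\tilde v(y;L_0,\tilde z)\big|_{\tilde z=\tilde z_0}.
\]
Remark~\ref{rem:analyticity} guarantees that $\tilde v$ depends analytically, in particular smoothly, on $\tilde z$. We may therefore differentiate \eqref{eq:adim-IVP} with respect to $\tilde z$ and exchange the $y$- and $\tilde z$-derivatives. The forcing term $\tilde\zeta(y,L_0)$ does not depend on $\tilde z$, so it drops out, and $w$ solves the linear initial value problem
\[
-w_{yy}+L_0^2\big(3\tilde v_*^2-1\big)w=0,\qquad w(0)=0,\quad w_y(0)=1.
\]
In other words, $\cM_{L_0}w=0$ holds as an ODE and $w$ satisfies the left boundary condition. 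By the definition \eqref{eq:BC-function} we also have
\[
\partial_{\tilde z}f(L_0,\tilde z_0)=w_y(1).
\]

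\emph{If $\partial_{\tilde z}f(L_0,\tilde z_0)=0$.} Then $w_y(1)=0$, so $w\in D(\cM_{L_0})$ and $\cM_{L_0}w=0$. Moreover $w\not\equiv0$ because $w_y(0)=1$. Hence $\ker\cM_{L_0}\neq\{0\}$.

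\emph{If $\ker\cM_{L_0}\neq\{0\}$.} Take $h\neq0$ in the kernel. Since $\tilde v_*$ is continuous, $h$ is a $C^2$ solution of a second-order linear ODE with continuous coefficients. Uniqueness for the Cauchy problem then forces $h_y(0)\neq0$: otherwise $h(0)=h_y(0)=0$ would give $h\equiv0$. The function $\tilde h\coloneqq h/h_y(0)$ solves the same initial value problem as $w$, so by uniqueness $\tilde h=w$. Consequently
\[
\partial_{\tilde z}f(L_0,\tilde z_0)=w_y(1)=\tilde h_y(1)=0 .
\]

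The only step needing care is the justification of the variational equation, namely that $\partial_{\tilde z}$ commutes with $\partial_y^2$ and that $\partial_{\tilde z}\tilde v$ is the solution of the linearized Cauchy problem. This is the standard theorem on differentiability of ODE solutions with respect to initial data. It applies because the nonlinearity is polynomial and Lemma~\ref{lem:persistence} ensures the solution exists on all of $[0,1]$ for $\tilde z$ near $\tilde z_0$. Beyond that, the argument reduces to the observation that the kernel of $\cM_{L_0}$ with the Dirichlet condition at $0$ is at most one-dimensional, spanned by $w$.
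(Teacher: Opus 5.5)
Your argument is correct and is essentially the paper's proof: differentiating \eqref{eq:adim-IVP} in $\tilde z$ gives the linearized Cauchy problem with $h(0)=0$, $h_y(0)=1$ and $\partial_{\tilde z}f(L_0,\tilde z_0)=h_y(1)$, and the converse follows by normalizing a kernel element $k$ by $k_y(0)$, which is nonzero by uniqueness for the Cauchy problem. One cosmetic point: existence of $\tilde v(\cdot;L_0,\tilde z)$ on $[0,1]$ for $\tilde z$ near $\tilde z_0$ already follows from continuous dependence on initial data (the set of initial slopes giving a solution on $[0,1]$ is open), so you do not need Lemma~\ref{lem:persistence}; its hypothesis $\tilde z\in[L z_\ell, L z_r]$ would not cover a full neighbourhood of $\tilde z_0$ if $\tilde z_0$ were an endpoint.
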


\begin{proof}
Differentiate the initial-value problem~\eqref{eq:adim-IVP} with respect to $\tilde z$ at $(L_0,\tilde z_0)$.
The variation $h(y)=\partial_{\tilde z}\tilde v(y;L_0,\tilde z_0)$ solves
\begin{equation}\label{eq:lin-adim}
 \begin{cases}
    \cM_{L_0} h=0 & \text{for $y\in(0,1)$,}\\[2pt]
    h(0)=0,\quad h_y(0)=1.
    \end{cases}
\end{equation}
Hence $\partial_{\tilde z}f(L_0,\tilde z_0)=h_y(1)$. 
Thus, $\partial_{\tilde z}f(L_0,\tilde z_0)=0$ if and only if the solution to \eqref{eq:lin-adim} is also a solution to \eqref{eq_lin_BVP}, \emph{i.e.}, if and only if \eqref{eq_lin_BVP} admits a nontrivial solution. 
Conversely, if $k$ is a nontrivial solution to \eqref{eq_lin_BVP}, then $k_y(0)\neq 0$ by uniqueness; defining $h \coloneqq k/k_y(0)$ gives $h(0)=0$, $h'(0)=1$, $h'(1)=0$, hence $\partial_{\tilde z}f(L_0,\tilde z_0)=0$.
\end{proof}
Therefore, branch points correspond to nontriviality of the kernel of the linearized operator $\cL_{L_0}$\,. 
In the next theorem, we prove that there are at most countably many branch points.
\begin{theorem}[Rarity of branch points in $L$] \label{cor:discreteL}
The set 
\begin{equation}\label{def_E}
E \coloneqq \big\{L>0: \exists\, j\in\{1,\ldots,N(L)\} : \text{$(L,\tilde z^j(L))$ is a branch point}\, \big\} 
\end{equation}
is discrete (in particular, of measure zero).
\end{theorem}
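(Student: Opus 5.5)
The set $E$ consists of the values $L$ at which the analytic system
\[
f(L,\tilde z)=0,\qquad \partial_{\tilde z}f(L,\tilde z)=0
\]
has a solution $\tilde z$. The plan is to show that this set of $L$'s has no accumulation point in $(0,\infty)$, using analyticity of $f$ (Remark~\ref{rem:analyticity}) together with the a priori confinement of zeros to the compact interval $[Lz_\ell, Lz_r]$ (Lemma~\ref{LMnew}). By Proposition~\ref{p-uni}, for $L<\Lambda_0$ the unique steady state is the IFT branch from $L=0$. I would check that the linearization there is invertible (it is a small perturbation of $-h_{yy}$), so by Proposition~\ref{lem:equivalence} there are no branch points in $(0,\Lambda_0)$, after shrinking $\Lambda_0$ if necessary. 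It therefore suffices to show that $E\cap[a,b]$ is finite for every compact $[a,b]\subset(0,\infty)$.

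Argue by contradiction. Suppose $L_n\in E\cap[a,b]$ are distinct, with branch points $\tilde z_n\in[L_n z_\ell, L_n z_r]$. Passing to a subsequence, $(L_n,\tilde z_n)\to(L_*,\tilde z_*)$ with $L_*\in[a,b]$. By continuity, $f(L_*,\tilde z_*)=\partial_{\tilde z}f(L_*,\tilde z_*)=0$. Since $\tilde z\mapsto f(L_*,\tilde z)$ is analytic and not identically zero (it has finitely many zeros by Theorem~\ref{t201}), it vanishes at $\tilde z_*$ to some finite order $m\ge 2$. The Weierstrass Preparation Theorem then gives, near $(L_*,\tilde z_*)$,
\[
f(L,\tilde z)=U(L,\tilde z)\,P(L,\tilde z),
\]
where $U$ is a unit and $P$ is a Weierstrass polynomial of degree $m$ in $\tilde z-\tilde z_*$ whose coefficients are analytic in $L$. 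Near this point, the branch points are exactly the common zeros of $P$ and $\partial_{\tilde z}P$, that is, the multiple roots of $P(L,\cdot)$. These occur precisely where the discriminant $\Delta(L)$ of $P$ vanishes, and $\Delta$ is real-analytic in $L$. A real-analytic function of one variable that is not identically zero has isolated zeros. So, provided $\Delta\not\equiv0$, only finitely many $L_n$ near $L_*$ lie in $E$, which is a contradiction.

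The main obstacle is excluding $\Delta\equiv0$, that is, excluding a curve of degenerate zeros along which $P$ has a repeated factor for every $L$ near $L_*$. I would first factor $P$ into irreducible factors in the ring of convergent power series and work with its square-free part, which has the same zero set. This deals with zeros that are merely doubled algebraically, but a genuinely degenerate branch $\tilde z(L)$ with $f=\partial_{\tilde z}f=0$ along it would survive this step. To rule such a branch out, I would use the equivalence of Proposition~\ref{lem:equivalence}: along the branch, $\ker\cM_L\neq\{0\}$ for all $L$ near $L_*$. I would then try to contradict this using the variational structure. Steady states are critical points of the energy $\mathcal J$, and a continuum of steady states with kernel throughout would make $\mathcal J$ constant along the branch. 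I would try to use the explicit dependence of $\tilde\zeta(y,L)=\zeta(Ly)$ on $L$, for example by differentiating the equation in $L$ along the branch and testing against the kernel element, to show that such a branch cannot persist. If this fails, my fallback is to continue the branch analytically down to small $L$ and contradict the nondegeneracy established for $L<\Lambda_0$; I expect this continuation to be the delicate part.
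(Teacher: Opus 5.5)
\paragraph{Verdict: genuine gap at the decisive step.} Your preliminary steps are sound and track the paper's proof. These are: ruling out $(0,\Lambda_0)$; extracting a limit degenerate zero $(L_*,\tilde z_*)$ by compactness; using $f(L_*,\cdot)\not\equiv 0$ (the paper's case (a)); and the local Weierstrass preparation with the discriminant (case (b)).

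The gap is the step you flag yourself. You never show that the local discriminant $\Delta$ is not identically zero. Equivalently, you never exclude an analytic arc $\tilde z=g(L)$ through $(L_*,\tilde z_*)$ along which $f=\partial_{\tilde z}f=0$. This is the heart of the theorem, since such an arc would put a whole interval of lengths into $E$.

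\paragraph{Why the two ideas you list do not close it.}
\begin{itemize}
\item \emph{The energy argument.} Along the arc $L$ varies, so the steady states are critical points of \emph{different} functionals. After rescaling to $[0,1]$ the coefficients $L^2$ and $\zeta(Ly)$ depend on $L$. Hence $\frac{d}{dL}\mathcal J_L(u_L)=(\partial_L\mathcal J_L)(u_L)$ has no reason to vanish, and even constancy would contradict nothing.
\item \emph{Differentiating in $L$ and testing against the kernel element $h$.} This only yields the Fredholm solvability condition for $\cM_L\,\partial_L\tilde v$, which holds automatically along the arc. Using $h_y(1)=0\neq h(1)$ one checks that it is equivalent to $\partial_L f=0$. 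That already follows from differentiating $f(L,g(L))=0$ with $\partial_{\tilde z}f=0$. In the unscaled shooting function $F(L,z)=u_x(L;z)$ it reads $u_{xx}(L)=0$. This is a consistency condition, not a contradiction.
\end{itemize}

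\paragraph{Your fallback is the paper's argument, and it needs a global mechanism.} Continuing the arc down to $L<\Lambda_0$ and contradicting Proposition~\ref{p-uni} is exactly how the paper finishes. It declares that $d$ vanishes identically on $(0,\infty)$ and invokes the unique simple zero for small $L$. As you anticipate, this continuation is the real difficulty and does not follow from the local data. The Weierstrass polynomial and its discriminant exist only in a neighbourhood of $(L_*,\tilde z_*)$. Moreover, for a general real-analytic function an arc of double zeros can fold back in $L$ and never reach small $L$; for instance $\bigl((L-L_1)-(\tilde z-\tilde z_1)^2\bigr)^2$ has its degenerate set only in $L\ge L_1$.

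To complete the proof you need something specific to the ODE that propagates a degenerate arc to $L\to 0$. A natural starting point is the relation $u_{xx}(L)=0$ above. It places the endpoint $(u(L),u_x(L))$ of every steady state on the arc on the nullcline $\{(u+c_0)^3-(u+c_0)+\zeta(L)=0,\ u_x=0\}$, along which one can try to continue analytically. As written, the proposal identifies the obstacle but does not overcome it, so the key step is missing.
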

\begin{proof}
Define the zero level sets, for $i=0,1$,
$$
\cS_i \coloneqq \big\{(L,\tilde z)\in (0,\infty) \times [Lz_\ell \,, Lz_r] \ :\ \partial^i_{\tilde z} f(L,\tilde z)=0 \big\}. 
$$
We immediately notice  that the set $\cS_0\setminus \cS_{1}$ is a smooth manifold of dimension $1$ as a consequence of the Implicit Function Theorem. 
Consider any compact set $K\subset(0,+\infty)$ and the projection $\pi_L \colon (0,+\infty) \times \bR\to (0,+\infty)$ on the first component. 

We claim that $\pi_L \big(\cS_0\cap\cS_1\cap (K\times [Lz_\ell \,, Lz_r]) \big) \subset K$ is finite. 
If the set $\cS_0\cap\cS_1\cap (K\times [Lz_\ell \,, Lz_r])$ is finite, then the claim follows.
Otherwise, it has an accumulation point and two cases may occur.
\begin{itemize}
\item[(a)] There exists $L_0\in K$ such that $\cS_0\cap \cS_1\cap(\{L_0\}\times [Lz_\ell \,, Lz_r])$ has an accumulation point, hence the same is true about $\cS_0 \cap(\{L_0\}\times [Lz_\ell \,, Lz_r])$. 
As a result, the analytic function of one variable $\tilde z \mapsto f(L_0, \tilde z)$ vanishes on a set having an accumulation point. 
This means that $f(L_0, \tilde z) \equiv 0$, which implies that there are infinitely many steady states for $L=L_0$\,, contradicting Theorem~\ref{t201}.

\item[(b)] The set $\pi_L \big(\cS_0\cap \cS_1\cap (K \times [Lz_\ell \,, Lz_r]) \big)$ has an accumulation point. 
By invoking the Weierstrass Preparation Theorem (see, \emph{e.g.}, \cite[Chapter C, \S 2]{loja}), and possibly up to a local analytic change of coordinates, we can factor
\[
f(L,\tilde z) = U(L,\tilde z) P_L(\tilde z),
\]
where $U$ is an analytic nonvanishing function, and $P_L$ is a monic polynomial of degree $n$, whose coefficients are analytic in $L$. 
We recall that the discriminant of $P_L$ is a function of $L$ and is defined by
$$
d(L) = (-1)^{\binom{n}{2}} \prod_{j=1}^n \frac{\partial P_L}{\partial\tilde z}(\tilde z_j),
$$
where $\tilde z_j$\,, $j=1,\ldots, n$ is the complete set of zeros of the polynomial $\tilde z\mapsto P_L(\tilde z)$.  

In this case, $L\mapsto d(L)$ is an analytic function of one variable vanishing on a set with an accumulation point in a compact set (in the intersection $\cS_0\cap \cS_1$, zeros are multiple). 
Thus, $d$ must be identical to zero on $(0, \infty)$. 
However, we know that for small values of $L$ there is a unique and simple zero of $f(L,\cdot)$, which contradicts Proposition \ref{p-uni}.
\end{itemize}
The theorem is proved.
\end{proof}

We noted that problems \eqref{r-red} and \eqref{eq:adim-BVP} are equivalent, {\it i.e.}, $u$ is a solution to  \eqref{r-red} if and only if $\tilde v$,  the scaling of $u$, is a solution to \eqref{eq:adim-BVP}. 
Moreover, problem \eqref{eq_lin_BVP} has a nontrivial solution  if and only if there is a non-zero solution of
\begin{equation}\label{eq_linear}
    \begin{cases}
 -h_{xx}   + (3u^2(x)-1) h =0, & \text{for $x\in (0,L)$,}\\[2pt]
 h(0) = 0, \quad h_x(L) =0.
    \end{cases}
\end{equation}
We can summarize the results of this section as follows.
\begin{theorem}
Let us suppose $L\in(0,\infty)\setminus E,$ where $E$ is defined in \eqref{def_E} and $c_0\in(-1,0)$. Then there exists a finite family $\{u^1,\ldots, u^{N(L)}\}$ of steady stated of \eqref{r-stat} and the linearized operator at each $u^i$, {\it i.e.}, \eqref{eq_linear} has a trivial kernel.
\end{theorem}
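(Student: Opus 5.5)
The statement collects results already proved, so the plan is to chain them carefully and check that the rescaling respects kernels.

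First, fix $L\in(0,\infty)\setminus E$ and $c_0\in(-1,0)$. By Lemma~\ref{lemma_0}, the steady states of \eqref{r-stat} are exactly the solutions of \eqref{r-red}. Under the change of variables $x=Ly$, $\tilde v(y)=u(Ly)+c_0$, these correspond bijectively to solutions of \eqref{eq:adim-BVP}. By the shooting construction, the solutions of \eqref{eq:adim-BVP} correspond bijectively to the zeros $\tilde z$ of $f(L,\cdot)$. Lemma~\ref{LMnew} confines these zeros to $[Lz_\ell,Lz_r]$, and Lemma~\ref{lem:persistence} guarantees that the shooting solutions exist on $[0,1]$ for such $\tilde z$. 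Theorem~\ref{t201} then gives finitely many zeros $\tilde z^1(L),\dots,\tilde z^{N(L)}(L)$. This yields the finite family $u^1,\dots,u^{N(L)}$, with $u^i(x)=\tilde v(x/L;L,\tilde z^i(L))-c_0$.

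Second, since $L\notin E$, the definition \eqref{def_E} says that no pair $(L,\tilde z^i(L))$ is a branch point. Hence $\partial_{\tilde z}f(L,\tilde z^i(L))\neq0$ for every $i$. Proposition~\ref{lem:equivalence} then gives $\ker\cM_L=\{0\}$ at each $\tilde v_*=\tilde v(\cdot;L,\tilde z^i(L))$, that is, \eqref{eq_lin_BVP} has only the trivial solution.

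Third, I transfer triviality of the kernel back to the original variables. Given $h$ solving \eqref{eq_linear} linearized at $u^i$, I set $k(y)=h(Ly)$. Then $k_{yy}=L^2h_{xx}$, so $-k_{yy}+L^2(3\tilde v_*^2-1)k=0$, and $k(0)=0$, $k_y(1)=Lh_x(L)=0$; the map is clearly invertible. One must check that the potential matches. The linearization of $\mu$ in \eqref{df-mu} at $u$ has coefficient $3(u+c_0)^2-1$, which equals $3\tilde v_*^2-1$ after rescaling. So \eqref{eq_linear} should be read with $u+c_0$ in place of $u$ (equivalently, in terms of the unshifted concentration), which is consistent with the remark preceding the statement. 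With this reading, a nonzero solution of \eqref{eq_linear} would give a nonzero element of $\ker\cM_L$, a contradiction.

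The only delicate step is this bookkeeping of the shift by $c_0$ in the linearized potential. The assumption $c_0\in(-1,0)$ is used only through the earlier lemmas, where it guarantees the a priori bounds.
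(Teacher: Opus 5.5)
Your proof is correct and follows the paper's own argument. Lemma~\ref{lemma_0}, the rescaling to \eqref{eq:adim-BVP} and Theorem~\ref{t201} give finiteness, and the definition of $E$ combined with Proposition~\ref{lem:equivalence} gives the trivial kernel. Your explicit transfer $k(y)=h(Ly)$ makes precise a step the paper only asserts, and your remark that the potential in \eqref{eq_linear} must read $3(u+c_0)^2-1$ correctly fixes a typo in the paper's display. You are also right not to use Theorem~\ref{cor:discreteL}: the paper cites it, but it plays no logical role once $L\notin E$ is assumed.
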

\begin{proof}
We noticed in Lemma \ref{lemma_0} that equations \eqref{r-stat} and \eqref{r-red} are equivalent. 
Existence of a finite family of solutions to \eqref{r-red} for any positive $L$ and $c_0$ follows from Theorem \ref{t201} after taking into account that  \eqref{r-red}  and the rescaled equation \eqref{eq:adim-BVP} are equivalent. 
Proposition \ref{lem:equivalence} and Theorem \ref{cor:discreteL} combined show that if $L\in (0,\infty) \setminus E$, then all solutions to \eqref{r-red}  lead to linearized operators \eqref{eq_linear} with a trivial kernel.
\end{proof}

\section{Continuation to small \texorpdfstring{\(\beta\)}{beta}}\label{smallbeta}

We first address existence of steady state solution of \eqref{r-stat} for a range of $\beta.$ Later, in the course of proof of Lemma \ref{part-d}, we will study the spectrum of the linearized operator.
We return to the fourth-order formulation with \(\beta\) and we define the operator $\cF \colon X\times (-1,1)\to L^2(0,L)$ by formula
\[
\mathcal F(u,\beta) \coloneqq \big(-u_{xx} + (u+c_0)^3 - (u+c_0) + \zeta\big)_{xx} - \beta u_x\,,
\]
where $X$ is the following complete metric space
\begin{equation}\label{domainX}
X \coloneqq \{ u\in H^4(0,L)\colon u(0)=0,\ u_x(L)=0,\ \mu(0)=\mu_x(L)=0\}.
\end{equation}

At a steady state \(u_*\) for \(\beta=0\), the linearisation \(D_u\mathcal F(u_*,0)= \Delta \cM_L\) has kernel \(\{0\}\) provided the second-order linearised problem \eqref{eq:lin-adim} is nondegenerate, see Proposition~\ref{lem:equivalence}. 
Then, we can prove the following result.
\begin{theorem}[small-\(\beta\) continuation] \label{thm:IFT}
Suppose $0<L\notin E$, where $E$ is defined in~\eqref{def_E}, and let 
$u^j(L)$, for some $j\in\{1,\ldots,N(L)\}$, be a steady state of \eqref{sys_beta=0}.
Then the linearized operator at \(u^j(L)\) has trivial kernel and there exist \(\beta_0>0\) and a \(C^1\) function \((-\beta_0,\beta_0)\ni\beta\mapsto u_\beta^j \in H^4(0,L)\) with \(u_0^j=u^j(L)\) and \(\mathcal F(u_\beta^j,\beta)=0\). 
Moreover, there exists $C>0$ depending only on $\cF$ and $u^i_\beta$ such that
\begin{equation}\label{eq_small_beta_estimate}
\norm{u_\beta^j(L)-u^j(L)}_{H^4(0,L)} \le C\lvert \beta \rvert.
\end{equation}
\end{theorem}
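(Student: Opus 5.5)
We will apply the Implicit Function Theorem (in Banach spaces, see \cite{NirenbergBook}) to the map $\cF\colon X\times(-1,1)\to L^2(0,L)$ at the point $(u^j(L),0)$.

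\emph{Step 1: regularity of $\cF$.} We check that $\cF$ is well defined and $C^1$ (indeed polynomial, hence analytic) in $(u,\beta)$. Since $H^4(0,L)\hookrightarrow C^3([0,L])$, the term $((u+c_0)^3)_{xx}$ is a polynomial expression in $u,u_x,u_{xx}$ with continuous coefficients, so it maps $H^4$ smoothly into $H^2\subset L^2$. The term $-\beta u_x$ is bilinear and bounded, and $\zeta_{xx}$ is a fixed smooth function. A technical point is that $X$ is not a linear space, because the condition $\mu(u)(0)=\mu_x(u)(L)=0$ is nonlinear. We handle this by noting that $\mu\colon H^4\to H^2$ is a $C^1$ submersion: its derivative $h\mapsto -h_{xx}+(3(u+c_0)^2-1)h$ is onto the boundary data. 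Hence $X$ is a $C^1$ Banach submanifold near $u^j(L)$, with tangent space
\[
T=\{h\in H^4(0,L)\colon h(0)=h_x(L)=0,\ (\cM h)(0)=(\cM h)_x(L)=0\},
\]
where $\cM h\coloneqq -h_{xx}+(3(u^j+c_0)^2-1)h$. Alternatively, one can simply work in a local chart, parametrising $X$ near $u^j$ by $T$.

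\emph{Step 2: invertibility of the linearization.} This is the main step. We have $D_u\cF(u^j,0)h=(\cM h)_{xx}$ on $T$, and we show it is an isomorphism $T\to L^2(0,L)$.

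For injectivity, suppose $(\cM h)_{xx}=0$ with $(\cM h)(0)=(\cM h)_x(L)=0$. Exactly as in Lemma~\ref{lemma_0}, $\cM h$ is affine and hence $\cM h\equiv0$. Together with $h(0)=h_x(L)=0$, this means $h$ solves \eqref{eq_linear}, after rescaling to the problem of Proposition~\ref{lem:equivalence}. Since $L\notin E$, that problem has only the trivial solution, so $h=0$.

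For surjectivity, let $g\in L^2(0,L)$. We first solve $w_{xx}=g$ with $w(0)=w_x(L)=0$, which is explicit by double integration and gives $w\in H^2$. We then solve $\cM h=w$ with $h(0)=h_x(L)=0$. Since $\cM$ is a self-adjoint Sturm--Liouville operator with trivial kernel, it is invertible $L^2\to H^2$ by the Fredholm alternative, and elliptic regularity gives $h\in H^4$ because $w\in H^2$. The inverse is bounded by the open mapping theorem.

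\emph{Step 3: conclusion and the estimate.} The Implicit Function Theorem yields $\beta_0>0$ and a $C^1$ curve $\beta\mapsto u^j_\beta\in X\subset H^4$ with $u^j_0=u^j(L)$ and $\cF(u^j_\beta,\beta)=0$. The estimate \eqref{eq_small_beta_estimate} follows from the mean value inequality. Indeed, the derivative
\[
\frac{d u^j_\beta}{d\beta}=-\big(D_u\cF\big)^{-1}\partial_\beta\cF=\big(D_u\cF(u^j_\beta,\beta)\big)^{-1}(u^j_\beta)_x
\]
is continuous, so after shrinking $\beta_0$ it is bounded on $[-\beta_0/2,\beta_0/2]$, which gives $\norm{u^j_\beta-u^j}_{H^4}\le C|\beta|$. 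Since $N(L)$ is finite, we may finally take $\beta_0$ to be the minimum over $j$.

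The main obstacle is Step 2 combined with the nonlinear boundary condition in $X$. In particular, we must make sure that the kernel of $D_u\cF(u^j,0)$ computed on the tangent space $T$ really reduces to \eqref{eq_linear}, so that the hypothesis $L\notin E$ can be used.
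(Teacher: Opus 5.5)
Your proposal is correct and follows the paper's route: you apply the Implicit Function Theorem to $\cF$ at $(u^j(L),0)$ and reduce the kernel of $D_u\cF(u^j(L),0)=\partial_{xx}\circ\cM$, via the affine-chemical-potential argument, to the nondegenerate problem of Proposition~\ref{lem:equivalence}. Beyond the paper, your two-step inversion ($w_{xx}=g$, then $\cM h=w$) justifies surjectivity more cleanly than the paper's compact-resolvent remark, and your mean-value argument supplies the estimate \eqref{eq_small_beta_estimate} that the paper leaves implicit; one simplification is that, because $u(0)=0$ and $u_x(L)=0$, the conditions $\mu(0)=\mu_x(L)=0$ are just the affine conditions $u_{xx}(0)=c_0^3-c_0+\zeta(0)$ and $u_{xxx}(L)=\zeta_x(L)$, so $X=\eta+D(A)$ is an affine space and no submanifold argument is needed (your $T$ is exactly $D(A)$).
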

\begin{proof}
The existence of the function $\beta \mapsto u_\beta^j$ is a direct application of the Implicit Function Theorem in Banach spaces (see, \emph{e.g.}, \cite[Theorem~2.7.2]{NirenbergBook}). Indeed,  the mapping \(\mathcal F\) is of class~\(C^1\) in a neighborhood of \((u^j(L),0)\) in $X$ defined in \eqref{domainX}. 
Moreover, due to  Proposition~\ref{lem:equivalence}, \(D_u\mathcal F(u^j(L),0)\) has trivial kernel by the nondegeneracy $(L,\tilde z^j(L))$.  Since  \(D_u\mathcal F(u^j(L),0)\) is the inverse of a compact operator, its spectrum consists of eigenvalues, hence  \(D_u\mathcal F(u^j(L),0)\) is an isomorphism.
\end{proof}

\section{Semigroup background}\label{semigroupBG}
In this section we will recall the existence result of \cite{BonDavMor2018}. We want to recast it in the language of the analytic semigroup theory as exposed in \cite{engel} or \cite{henry} (to which we redirect the reader for basic facts). For this purpose we introduce in a concise way the basics  of this  theory. We have to do this, 
because  our main tool, \emph{i.e.}, \cite[Theorem 5.26]{CLR}, is written in this language. In fact, this theorem provides a check-list of properties of the semigroup generated by an equation to be established. 

\subsection{Existence of weak solutions to \eqref{eq2}}
Here, we recall the statement of the main existence result in \cite{BonDavMor2018}. We first introduce a number of function spaces. We set
$$V \coloneqq \{u \in H^1 (0, L) : u(0) = 0\}$$
and we endow it with the scalar product $(u,v)_V\coloneqq \int_0^L u'(x)v'(x)\,\de x$.
We introduce the notion of weak solution to \eqref{eq3} as follows.
\begin{definition}
\label{df-weak}
Let $u_0 \in V$ be a given initial datum. A function $u$ is a weak solution to \eqref{eq2} in $[0, T ]$ corresponding to $u_0$ if $u(x, 0) = u_0 (x)$ for almost every $x \in (0, L)$, and the following conditions hold:
\begin{enumerate}
\item $u \in H^1 (0, T ; V' ) \cap L^2 (0, T ; H^3 (0, L));$
\item the chemical potential $\mu \coloneqq -u_{xx} + \partial_s W(x, u+c_0) \in L^2(0, L; V)$;
\item for every $\psi \in V$ and almost every  $t \in (0, T )$, the function $u$ satisfies
\begin{equation*}\label{r2.5}
\langle\partial_t u, \psi\rangle_{V, V'} = - (\mu, \psi)_V -\beta \int_0^L u_x(x)\psi(x) \, \de x;
 \end{equation*}
\item for almost every $t \in (0, T )$, the function $u$ satisfies the boundary conditions
$u(0, t) = u_x(L, t) = 0$.
\end{enumerate} 
\end{definition}
The following result was established in \cite{BonDavMor2018}.
\begin{proposition}(\cite[Theorem 2.4]{BonDavMor2018})\label{BDM_thm2.4}
Let $W$ be given by \eqref{df-W} and let $u_0 \in V$. For any $T > 0$, there exists a unique weak solution $u$ to the problem \eqref{eq2} in
$[0, T ]$ corresponding to $u_0$ in the sense of Definition \ref{df-weak}. Furthermore, setting
\begin{equation}\label{df-E}
\cE(v) \coloneqq \frac12 \int_0^L
 |v_x (x)|^2\, \de x + \int_0^L  W (x, v(x))\, \de x\qquad\hbox{for }v \in V,   
\end{equation}
the following energy equality holds true for almost every $t \in (0, T )$:
\begin{equation}\label{r-dE/dt}
\frac{\de}{\de t} \cE(u(t)) + \|\mu (t)\|^2_V = 
-\beta \int_0^{L}  u_x (x, t) \mu(x, t)\, \de x,
\end{equation}
where $\mu$ is defined in \eqref{df-mu}. Finally, the solution $u$ depends continuously on the
initial data, in the following sense: given any $M > 0$, there exists a constant
$C_M>0$ (depending on $\beta$, $L$, $M$, and~$T$) such that for every initial data $u_0\,,\bar u_0 \in V$ with $\|u_0\|_V\,, \|\bar u_0 \|_V \le  M$ the corresponding weak solutions $u$, $\bar u$
satisfy, for all $t \in [0, T ]$,
$$
\|u(t) - \bar u(t)\|_V \le C_M \|u_0 - \bar u_0 \|_V\,.
$$
\end{proposition}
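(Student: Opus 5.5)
The statement to prove is Proposition~\ref{BDM_thm2.4}, quoted from \cite{BonDavMor2018}. I would prove it by a Faedo--Galerkin scheme combined with energy estimates, then pass to the limit and prove uniqueness and continuous dependence.

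\emph{Galerkin approximation.} Take the eigenfunctions $\{w_k\}$ of $-\partial_{xx}$ with $w(0)=0$, $w_x(L)=0$. These form an orthogonal basis of $L^2(0,L)$ and of $V$. I seek $u_n(t)=\sum_{k\le n} a_k(t)w_k$ and $\mu_n=P_n(-u_{n,xx}+\partial_sW(x,u_n+c_0))$ satisfying the weak formulation (3) of Definition~\ref{df-weak} for $\psi=w_k$, $k\le n$. This is a finite system of ODEs with locally Lipschitz right-hand side, so a local solution exists.

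\emph{A priori estimates.} Testing with $\psi=\mu_n$ gives the discrete form of \eqref{r-dE/dt}. The right-hand side is bounded using Poincaré in $V$ and Young:
\[
\beta\Big|\int_0^L u_{n,x}\mu_n\,\de x\Big|\le \tfrac12\|\mu_n\|_V^2+C\beta^2\|u_n\|_{L^2}^2 .
\]
Separately, testing the equation with $\psi=u_n$ controls $\|u_n\|_{L^2}$, or one notes that $\mathcal E(u_n)\ge c\|u_n\|_V^2-C$ because $W_0$ has quartic growth. Gronwall then yields bounds, uniform in $n$, of $u_n$ in $L^\infty(0,T;V)$ and of $\mu_n$ in $L^2(0,T;V)$. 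Elliptic regularity applied to $-u_{n,xx}=\mu_n-\partial_sW$, with the one-dimensional embedding $V\subset L^\infty$, gives $u_n\in L^2(0,T;H^3)$. The equation itself gives $\partial_tu_n$ bounded in $L^2(0,T;V')$.

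\emph{Limit passage.} Aubin--Lions gives strong convergence of $u_n$ in $C([0,T];L^2)\cap L^2(0,T;H^2)$, which suffices to pass to the limit in the cubic term. The boundary conditions survive in the limit by trace continuity. The energy equality for the limit follows because $\mu\in L^2(0,T;V)$ and $\partial_tu\in L^2(0,T;V')$ form a dual pair, so a Lions--Magenes-type chain rule for $\mathcal E$ applies.

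\emph{Uniqueness and continuous dependence.} This is the main obstacle, because the $V$-norm estimate requires controlling the cubic term at the $H^1$ level. I would first do an $L^2$ estimate: test the difference equation with $w=u-\bar u$ to get
\[
\tfrac12\tfrac{\de}{\de t}\|w\|_{L^2}^2+\|w_{xx}\|^2\le \big|\big((\partial_sW(u)-\partial_sW(\bar u))_x,w_x\big)\big|+\beta|(w_x,w)|.
\]
Here $u,\bar u$ are bounded in $L^\infty$ by $C(M)$ via the energy estimate, and I would interpolate $\|w_x\|\le\|w\|^{1/2}\|w_{xx}\|^{1/2}$. To upgrade to the $V$-norm, I would test with $-w_{xx}$ (working in the Galerkin setting). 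The nonlinear terms are handled through $H^3$ integrability of $u,\bar u$ and Gronwall, which gives $C_M$ depending on $\beta,L,M,T$.
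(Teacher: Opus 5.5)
The paper does not prove this proposition: it is quoted from \cite{BonDavMor2018}, so there is no in-paper argument to compare with. Your Faedo--Galerkin route is standard and most of it works. One step, however, fails as written.

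With your basis, every $w_k=\sin\bigl((2k-1)\pi x/(2L)\bigr)$ satisfies $w_k''(0)=0$, so $u_{n,xx}(0)=0$ for all $n$. Any weak solution, on the other hand, has $\mu\in V$ and $u(0)=0$, hence $u_{xx}(0,t)=\partial_sW(0,c_0)=c_0^3-c_0+\zeta(0)$, which is nonzero in general. This is why the paper later needs the lift $\eta$. With your own definition of $\mu_n$, the correct identity is $-u_{n,xx}=\mu_n-P_n\partial_sW(x,u_n+c_0)$, not $\mu_n-\partial_sW$. Because $\partial_sW(x,u_n+c_0)$ does not vanish at $x=0$, its projection $P_n\partial_sW$ is bounded in $L^2$ but has no uniform $H^1$ bound. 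A uniform $L^2(0,T;H^3)$ bound on $u_n$ is in fact impossible. The map $v\mapsto v_{xx}(0)$ is continuous on $H^3$, so such a bound would force $u_{xx}(0,\cdot)=0$ for the limit, contradicting $\mu(0)=0$. Consequently your Aubin--Lions compactness in $L^2(0,T;H^2)$ is unjustified.

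The repair is straightforward:
\begin{itemize}
\item \emph{Compactness.} You do have $u_n$ bounded in $L^\infty(0,T;V)\cap L^2(0,T;H^2)$ and $\partial_tu_n$ bounded in $L^2(0,T;V')$. This gives strong convergence in $C([0,T];L^2)\cap L^2(0,T;H^1)$, which is enough for the cubic term since $V\hookrightarrow L^\infty$.
\item \emph{Identifying $\mu$ and recovering $H^3$.} Since $P_n\partial_sW(x,u_n+c_0)\to\partial_sW(x,u+c_0)$ in $L^2(0,T;L^2)$, you get $\mu=-u_{xx}+\partial_sW(x,u+c_0)$. The regularity $u\in L^2(0,T;H^3)$ then follows a posteriori, for the limit only, from $u_{xx}=\partial_sW(x,u+c_0)-\mu\in L^2(0,T;H^1)$.
\item \emph{Chain rule.} This a posteriori regularity, together with $u_{xx}(0,t)$ being constant in $t$, is what makes a time-mollification proof of the chain rule for $\mathcal E$ work.
\item \emph{$V$-stability.} This estimate needs no $H^3$ information on $u,\bar u$, which you would not have uniformly at the Galerkin level anyway. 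Set $g\coloneqq\partial_sW(x,u_n+c_0)-\partial_sW(x,\bar u_n+c_0)$; then $g(0)=0$. Since the $w_k$ are also $V$-orthogonal, $\|P_ng\|_V\le\|g\|_V\le C(M)\|w_x\|$, using only $L^\infty(0,T;V)$ bounds and $\|w\|_{L^\infty}\le\sqrt L\|w_x\|$. Testing with $-w_{xx}$ then gives $\frac{\de}{\de t}\|w_x\|^2+\|w_{xxx}\|^2\le C(M)\|w_x\|^2$. The $\beta$-term equals $-\frac\beta2w_x(0)^2\le0$.
\item \emph{$L^2$ estimate.} Your $L^2$ difference estimate is fine. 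Note it relies on $w_{xx}(0)=0$, which holds because both solutions share the value $u_{xx}(0)=\partial_sW(0,c_0)$.
\end{itemize}

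A minor slip: $\beta\int u_{n,x}\mu_n$ cannot be bounded by $\|u_n\|_{L^2}$ alone, because integration by parts leaves the boundary term $u_n(L)\mu_n(L)$. Instead use $\beta\bigl|\int u_{n,x}\mu_n\bigr|\le\frac12\|\mu_n\|_V^2+C\beta^2\|u_{n,x}\|^2$ together with $\|u_{n,x}\|^2\le2\mathcal E(u_n)+C$. This closes by Gronwall as you intended.
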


\subsection{Basics of the analytic semigroup theory}\label{s-semi}
By considering a Hilbert space $\scrH$, we rewrite the problem in \eqref{eq2} as
\begin{equation}\label{eq_abstract_formulation}
w_t + A w = F_\beta(w), \qquad w(0)=w_0
\end{equation}
where $A \colon D(A)\subset \scrH \to \scrH$ is the (one-dimensional) bilaplacian operator $A=\Delta^2=(-\Delta)^2$, and where the meaning of $F_\beta$ will be explained momentarily. Here,  
$$
D(A) \coloneqq\{ w\in H^4(0,L): w(0) = w_{xx}(0) = 0, \ w_x(L)=w_{xxx}(L) = 0\}.
$$
It is not difficult to check that $A$ is a positive definite self-adjoint operator. Hence, it is sectorial on $\scrH=L^2(0,L)$ and $-A$ is a generator of an analytic semigroup. In particular, the fractional powers $A^\alpha$ are well-defined. 
We write $\scrH^\alpha \coloneqq D(A^\alpha)$; in particular, $A^{1/2} = - \Delta$ with
$$
\scrH^{1/2}= D(A^{1/2}) = \{ w\in H^2(0,L):\ w(0) = 0= w_x(L)\}
$$
and 
$$
\scrH^{1/4} = V, \qquad \| (-\Delta)^{1/2} w \| = \| w_x\|,
$$
where, here and hencefort, we use $\|\cdot\|$ to denote the $L^2(0,L)$-norm $\|\cdot\|_{L^2(0,L)}$\,. 
Indeed,
\begin{equation}\label{r-pol}
\| (-\Delta)^{1/2} w \|^2 = ( w, -\Delta w) = (w_x, w_x).
\end{equation}

We also recall existence of constants $C_\alpha$ for all $\alpha\in (0,1]$ such that, see \cite[Theorem 1.4.3]{henry},
\begin{equation}\label{r-spec} 
\|A^\alpha e^{-At} \| \le C_\alpha t^{-\alpha} e^{-\lambda t}, \qquad t>0,
\end{equation}
where $\lambda \in(0,\inf \sigma (A))$, $\sigma (A)$ being the spectrum of $A$.

In order to accommodate the boundary conditions coming from $\mu(0) = 0=\mu_x(L)$, we introduce $w = u - \eta$, where $\eta$ is a smooth function with the following properties,
$$
\eta(0)=0=\eta_x(L), \quad \eta_{xx}(0) = c_0^3 - c_0 +\zeta(0), \quad \eta_{xxx}(L) = \zeta_x(L).
$$
Then $F_\beta \colon \scrH^{1/2} \to \scrH$ given by
\begin{equation}\label{df-Fbe}
F_\beta(w) = \Delta \partial_s W(x, w + \eta + c_0) - \beta w_x - \Delta^2 \eta - \beta \eta_x
\end{equation}
is well-defined, and one can easily check that $F_\beta$ is locally Lipschitz continuous. 
As a result, the basic existence result \cite[Theorem 3.3.3]{henry} applies, yielding local-in-time strong solutions for data $w_0\in \scrH^{1/2}$, meaning that 
\begin{equation}\label{r-cont}
w \in C((0,T); D(A))\cap C([0,T); \scrH^{1/2})\cap C^1((0,T); \scrH).
\end{equation}
Let us note that the semigroup theory provides locally more regular solutions than the weak ones constructed in \cite{BonDavMor2018}. 

Since it suffices that the initial condition $u_0$ is in $V= \scrH^{1/4}$ to construct a weak solution, we cannot immediately claim that weak and strong solutions coincide. However, Proposition~\ref{BDM_thm2.4} assures us that for a.e.~$t_0>0$ the weak solution $u(t_0)$ is in $H^3(0,L)$ and satisfies the appropriate boundary conditions, hence it is in $\scrH^{1/2}$. Thus, the strong solution of \eqref{eq_abstract_formulation} with the nonlinearity \eqref{df-Fbe} and initial condition $w_0 = u(t_0) - \eta$ exists. By the uniqueness part of Proposition~\ref{BDM_thm2.4}, we see that the strong solution $w(t)+\eta$ and  the weak  solutions $u(t)$ must coincide on $[t_0, T)$. 
Since $t_0>0$ and $t_0<T$ are arbitrary, we see that both types of solutions agree on $(0,\infty)$.

We need to establish a number of properties of the solution.
The following integral representation of strong solutions will be very useful for us, see \cite{engel,henry},
\begin{equation}\label{r-convar}
\begin{split}
w(t) =&\, e^{-\Delta^2 (t-t_0)}w_0 + \int_{t_0}^t e^{-\Delta^2 (t-\sigma)}\big( \Delta \partial_s W(x, w + \eta + c_0) - \beta w_x - \Delta^2 \eta - \beta \eta_x \big)\, \de\sigma \\
=&\,  e^{-\Delta^2 (t-t_0)}w_0 + (Id - e^{- \Delta^2 (t-t_0)}) \eta \\
&\, + \int_{t_0}^t e^{-\Delta^2 (t-\sigma)}\big( \Delta \partial_s W(x, w + \eta + c_0) - \beta w_x - \beta \eta_x \big)\, \de\sigma. 
\end{split}
\end{equation}
Since we want to use it to deduce estimates of weak solutions, $t_0$ must be greater than zero.

Let us note the following smoothing effect.
\begin{lemma}[Instantaneous smoothing]\label{lem:instantaneous_smoothing}
Let $u_0\in V$ and let $u$ be the corresponding weak solution to \eqref{eq2}.
Then, for every $0<\tau<T<\infty$ and every $m\in \bN$, there exists a positive constant $C_{m,\tau,T}=C_{m,\tau,T}(u_0,L,\beta,\eta,\zeta)$
such that
\[
\sup_{t\in[\tau,T]}\|u(t)\|_{H^m(0,L)}\le C_{m,\tau,T}.
\]
In particular,
\[
u(t)\in C^\infty(0,L)\qquad\text{for every }t>0.
\]
\end{lemma}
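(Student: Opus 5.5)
We argue by bootstrapping on the variation-of-constants formula \eqref{r-convar}, starting from the time-local estimates available for the weak solution. First, the energy equality \eqref{r-dE/dt} gives a bound on $\cE(u(t))$ for $t\in[0,T]$. Indeed, the right-hand side is controlled by $\beta\|u_x\|\,\|\mu\|\le \frac12\|\mu\|_V^2 + C\beta^2\|u_x\|^2$, using the Poincaré inequality for $\mu$; note that $\mu(0)=0$, so $\|\mu\|\le C\|\mu\|_V$. Since $\cE$ controls $\|u_x\|^2$ from above up to a constant, Gronwall's lemma yields $\sup_{[0,T]}\|u(t)\|_V\le C$ and $\int_0^T\|\mu\|_V^2\,\de t\le C$. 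From Definition~\ref{df-weak} we also have $u\in L^2(0,T;H^3)$. Hence we may pick $t_0\in(0,\tau/2)$ with $u(t_0)\in \scrH^{1/2}$ and a quantitative bound on $\|u(t_0)\|_{H^3}$. Because $\int\|u\|_{H^3}^2$ is finite, such a $t_0$ can be chosen with $\|u(t_0)\|_{H^3}^2\le 2\tau^{-1}\int_0^{\tau/2}\|u\|_{H^3}^2$.

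Next, we work with $w=u-\eta$ on $[t_0,T]$ in \eqref{r-convar} and raise regularity step by step. The key tool is the smoothing estimate \eqref{r-spec}. Since $\sup\|u\|_V\le C$, the embedding $H^1\subset L^\infty$ in one dimension bounds $u$ uniformly. Then $\Delta\partial_sW(x,w+\eta+c_0)$ is controlled in $\scrH$ by $\|w\|_{H^2}$, and $\beta w_x$ is lower order. The first step is to show $\sup_{[t_0,T]}\|A^{\alpha}w(t)\|<\infty$ for $\alpha$ up to $1/2$. For this we write the nonlinearity as $A^{1/2}g$ with $g=-\partial_sW+(\text{affine correction})$, so that
\[
\|A^{\alpha}w(t)\|\le C+\int_{t_0}^t C(t-\sigma)^{-\alpha-1/2}\|g(\sigma)\|\,\de\sigma ,
\]
where $\|g\|$ is bounded by the $L^\infty$ bound on $u$. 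This gives $\alpha<1/2$ directly. The step to $\alpha=1/2$ (an $H^2$ bound) and beyond then follows from a singular Gronwall inequality (Henry's lemma), since $\|F_\beta(w)\|\le C(1+\|A^{1/2}w\|)$ is linear in $A^{1/2}w$ once $L^\infty$ is controlled. With $\sup\|w\|_{H^2}$ in hand, a further application of \eqref{r-spec} with exponent $\alpha<1$ gives $w(t)\in \scrH^{\alpha}$, $\alpha<1$, uniformly on $[\tau,T]$, that is, $H^{4\alpha}$ bounds.

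To go beyond $D(A)$ we differentiate in time. Since $F_\beta$ is smooth (polynomial in $w$, with smooth $\zeta$ and $\eta$), the standard argument of \cite[Theorem~3.5.2]{henry} shows that $v=w_t$ solves the linearized equation $v_t+Av=DF_\beta(w)v$ on $[\tau',T]$, with bounds on $\|A^{\alpha}v\|$. Elliptic regularity for $A w=F_\beta(w)-w_t$ then gives $w\in H^{4+4\alpha}$. Iterating, with $\partial_t^k w$ at the $k$-th step and shrinking the time interval each time (from $\tau/2$ up to $\tau$ in finitely many steps for a given $m$), yields $\sup_{[\tau,T]}\|u\|_{H^m}\le C_{m,\tau,T}$. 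Each iteration shifts the compatibility conditions only through smooth data $\zeta,\eta$, so the boundary conditions cause no obstruction. Sobolev embedding then gives $u(t)\in C^\infty$.

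The main obstacle is the first, low-regularity step: turning the mere $V$-bound with a.e.-in-time $H^3$ information into uniform $H^2$ control through the convolution integral. The nonlinearity at this level is only $\scrH$-valued with norm depending on $\|w\|_{H^2}$, so the singular Gronwall argument must be set up carefully with a good choice of $t_0$. If that becomes delicate, we would instead use the energy dissipation $\int\|\mu\|_V^2<\infty$ together with $\mu=-u_{xx}+\partial_sW$ to get $\int\|u\|_{H^3}^2<\infty$, and derive an $H^2$-energy inequality by testing the equation with $u_{xxxx}$.
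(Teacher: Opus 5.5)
Your proposal is correct, but it follows a genuinely different route from the paper's proof.

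The paper picks $t_0\in(0,\tau/2)$ with $u(t_0)\in H^3$ and gets an $H^4$ bound on $[\tau,T]$ directly from $w\in C((t_0,T];D(A))$ for the strong solution. It then bootstraps purely in space: it applies $A^{(m+1)/4}$ to \eqref{r-convar} on $[t_0,t]$ and bounds $\|A^{(m-2)/4}F_\beta(w)\|$ by $\|F_\beta(w)\|_{H^{m-2}}$ via the Banach-algebra property.

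You instead build the low-order bounds explicitly. The energy identity gives $H^1$. Duhamel with $\Delta\partial_sW=A^{1/2}g$ (the affine correction being $c_0^3-c_0+\zeta(0)+\zeta_x(L)\,x$), together with Henry's singular Gronwall lemma, gives $H^2$. Smoothing then gives $H^{4\alpha}$ for $\alpha<1$. You pass beyond $D(A)$ by differentiating in time and using one-dimensional elliptic regularity for $w_{xxxx}=F_\beta(w)-w_t$.

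The paper's argument is shorter; yours is more robust in two ways.
\begin{itemize}
\item Your singular Gronwall step supplies the a priori $\scrH^{1/2}$ control guaranteeing that the strong solution issued from $w(t_0)$ exists on all of $[t_0,T]$. The paper takes this for granted.
\item More importantly, $\|A^{s/4}f\|\le C\|f\|_{H^s}$ only makes sense for $f\in D(A^{s/4})$, i.e.\ under boundary conditions. Already at $m=4$ the paper's step needs $F_\beta(w)(0)=0$. But $F_\beta(w)(0,t)=w_{xxxx}(0,t)=6c_0u_x(0,t)^2-\beta u_x(0,t)+\mathrm{const}$ does not vanish in general, so $w(t)\notin D(A^{5/4})$ even though $w(t)\in H^5$. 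The paper's integral over $[t_0,t]$ also uses $H^m$ bounds that are known only on $[\tau,T]$.
\end{itemize}
Your time derivatives $\partial_t^k w$ satisfy the homogeneous boundary conditions of $D(A)$, and $\partial_x^4$ needs no compatibility conditions for $H^{s+4}$ regularity in one dimension. Shrinking the time interval at each step removes the second issue.

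When you write it up, two points should be made explicit.
\begin{itemize}
\item State where the starting bound for $v=w_t$ on $[\tau',T]$ comes from, e.g.\ $\sup_{[\tau',T]}\|Aw\|<\infty$ from H\"older continuity of $t\mapsto F_\beta(w(t))$, as in Henry's Section~3.5.
\item Note that the linear bound $\|F_\beta(w)\|\le C(1+\|A^{1/2}w\|)$ relies on the $W^{1,\infty}$ control from your $\alpha<1/2$ step (or Gagliardo--Nirenberg for the $u_x^2$ term), not on the $L^\infty$ bound alone.
\end{itemize}
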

\begin{proof}
Fix $0<\tau<T$. Since $u\in L^2(0,T;H^3(0,L))$ as per Definition~\ref{df-weak}(1), there exists
$t_0\in(0,\tau/2)$ such that $u(t_0)\in H^3(0,L)$ and satisfies the boundary
conditions.

Set $w=u-\eta$. By the discussion preceding \eqref{r-convar}, the
weak solution coincides on $[t_0,T]$ with the strong solution of
\eqref{eq_abstract_formulation} issued from $w(t_0)$.
In particular, $w\in C\bigl((t_0,T];D(A)\bigr)\cap C^1\bigl((t_0,T];\scrH\bigr)$, and, since $D(A)\hookrightarrow H^4(0,L)$, we infer that there exists a positive constant $C_{\tau,T}(u_0,L,\beta,\eta,\zeta)$ depending on $u_0$\,, $L$, $\beta$, $\eta$, and $\zeta$ such that
\[
\sup_{t\in[\tau,T]}\|w(t)\|_{H^4(0,L)}\le C_{\tau,T}(u_0,L,\beta,\eta,\zeta).
\]

We now assume that for some $m\ge 4$ there exists a constant $C_{m,\tau,T}(u_0,L,\beta,\eta,\zeta)>0$ such that
\[
\sup_{t\in[\tau,T]}\|w(t)\|_{H^m(0,L)}\le C_{m,\tau,T}(u_0,L,\beta,\eta,\zeta)
\]
and bootstrap.
Since $H^m(0,L)$ is a Banach algebra in one space dimension and $F_\beta$ is a
differential polynomial of order at most two, see \eqref{df-Fbe}, we have that there exists a positive constant $\widetilde{C}_{m,\tau,T}(u_0,L,\beta,\eta,\zeta)$ such that
\begin{equation}
\label{eq_banach_algebra_etc}
\sup_{t\in[\tau,T]}\|F_\beta(w(t))\|_{H^{m-2}(0,L)}\le \widetilde{C}_{m,\tau,T}(u_0,L,\beta,\eta,\zeta).
\end{equation}
Writing \eqref{r-convar} with initial time $t_0$ and applying
$A^{(m+1)/4}$, we obtain, thanks to \eqref{r-spec}, that
\begin{align*}
\|w(t)\|_{H^{m+1}(0,L)}
\le &\, C_{m,\tau,T}(u_0,L,\beta,\eta,\zeta)\|w(t_0)\|_{\scrH}
 + C_{3/4}\int_{t_0}^t (t-s)^{-3/4}
   \|F_\beta(w(s))\|_{H^{m-2}(0,L)}\,\de s \\
\le &\, C_{m,\tau,T}(u_0,L,\beta,\eta,\zeta)\|w(t_0)\|_{\scrH}
 + 4T^{1/4}C_{3/4}\widetilde{C}_{m,\tau,T}(u_0,L,\beta,\eta,\zeta) \\
 \eqqcolon&\, C_{m+1,\tau,T}(u_0,L,\beta,\eta,\zeta),
\end{align*}
for every $t\in[\tau,T]$.

Hence $w$ is bounded in $H^m(0,L)$ on $[\tau,T]$ for every
$m\in \bN$. Since $\eta$ is smooth, the same is true for $u$, possibly redefining the constant $C_{m,\tau,T}$\,.
\end{proof}

\subsection{Estimates uniform in $\beta$}
Our convergence analysis depends on uniform-in-time estimates on solutions in various Sobolev norms. On the way we will revisit the proof of existence a global attractor for \eqref{eq2} given in \cite[Theorem 6.5]{BonDavMor2018}. We will improve it by making it  independent of $L$ and $\beta\in (0,1]$. 
The first step in this direction in a new proof of existence of an absorbing ball. 
We will use, for this purpose, an argument based on the ideas of \cite{eden}, which were developed in \cite{KNR2016}. 
Then, we show uniform-in-time estimates in the Sobolev norms $H^m$ for all  $m\ge 2$. 
This is more than enough to establish the existence of a global compact attractor  $\cA_\beta$ in the $H^1$-topology. However, the uniform bounds in higher Sobolev norms will be useful when we want to infer stabilization not only in $H^1(0,L)$, but also in  $H^m(0,L)$, for $m\geq2$, see Theorem~\ref{t-5.6} below.

Let us call the semigroup of weak solutions generated by \eqref{eq2} by $S_\beta$\,.
Here is our basic result on $S_\beta$\,. 
\begin{proposition} [Absorbing ball in $ H^1(0,L)$]\label{H1absorb1D} 
Let us suppose that $\beta\in(0,1]$. The semigroup $S_\beta(t) \colon V \rightarrow V$, $u_0 \mapsto S_\beta(t)u_0 = u(t)$, generated by  the unique weak solutions to equation to (\ref{eq2}) for $u_0\in V$ has an $H^1$ absorbing ball $\cB = \{u \in V : \|u\|_{ H^1(0,L)} \leq M_1(L,\zeta)\}$, \emph{i.e.}, for a set $B\subset  V$ bounded in the $ H^1$-topology, there is $t_B> 0$ such that $S(t)u_0 =u(t) \in \cB$ for $u_0\in B$ and $t\ge t_B$\,. 
The value of the constant $M_1(L,\zeta)>0$ is provided at the end of the proof.
\end{proposition}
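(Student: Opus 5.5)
The plan is to adapt the Eden-type argument from \cite{eden,KNR2016}: modify the energy $\cE$ by adding a lower-order term whose time derivative produces a dissipative quantity that beats the perturbation $-\beta\int_0^L u_x\mu\,\de x$ on the right-hand side of \eqref{r-dE/dt}. Concretely, I would consider a modified functional of the form
\[
\cE_1(u) \coloneqq \cE(u) + \frac{\kappa}{2}\|(-\Delta)^{-1/2}u\|^2 ,
\]
or, more simply, combine \eqref{r-dE/dt} with the evolution of $\frac12\|u\|_{V'}^2$, obtained by testing the weak formulation with $\psi=(-\Delta)^{-1}u$ (the inverse Laplacian with the boundary conditions $\psi(0)=0=\psi_x(L)$, so that $(\mu,\psi)_V=\int_0^L\mu u\,\de x$). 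This test gives
\[
\frac12\frac{\de}{\de t}\|u\|_{V'}^2 + \int_0^L \mu u\,\de x = -\beta\int_0^L u_x\,(-\Delta)^{-1}u\,\de x,
\]
and $\int_0^L\mu u = \|u_x\|^2+\int_0^L \partial_sW(x,u+c_0)u$. The coercivity of the quartic potential gives a lower bound of the form $\int_0^L\partial_sW\,u \ge c\int_0^L W - C L$, hence $\int_0^L\mu u\ge c\,\cE(u)-C(L,\zeta)$.

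The next step is to estimate the $\beta$-term in \eqref{r-dE/dt} by Young's inequality. Since $u(0)=0$, Poincar\'e gives $\|\mu\|\le L\|\mu\|_V$, so
\[
\beta\Big|\int_0^L u_x\mu\Big|\le \tfrac12\|\mu\|_V^2+\tfrac{\beta^2L^2}{2}\|u_x\|^2 .
\]
The right-hand side of the $V'$ identity is bounded by $\beta C\|u\|_{V'}\|u_x\|$, again using Poincar\'e. Adding the two identities with weight $\kappa$, I would aim for a differential inequality
\[
\frac{\de}{\de t}\Big(\cE(u)+\tfrac\kappa2\|u\|_{V'}^2\Big)+\tfrac12\|\mu\|_V^2+\kappa c\,\cE(u)\le C_1\beta^2\|u_x\|^2+C_2 .
\]
To absorb $\|u_x\|^2\le 2\cE(u)+C$ and $\|u\|_{V'}^2\lesssim\|u_x\|^2$, one uses either $\beta\le 1$ together with a suitable choice of $\kappa$, or the $\|\mu\|_V^2$ term. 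One can also bound $\|u_x\|^2$ through $\int_0^L\mu u\le \|\mu\|\,\|u\|$ together with the quartic growth of $W$, which controls $\|u\|$ superlinearly. In either case Gronwall yields $\mathcal{Y}(t)\le \mathcal{Y}(0)e^{-\gamma t}+C/\gamma$ for $\mathcal{Y}=\cE+\frac\kappa2\|u\|_{V'}^2$. Since $\mathcal{Y}$ is bounded above and below by the $H^1$ norm (the lower bound uses $W_0\ge0$ and $|\zeta|\le1$, the upper bound uses the embedding $H^1\subset L^\infty$), this gives the absorbing ball with radius $M_1$ and an entrance time $t_B$ depending only on $\sup_B\|u_0\|_{H^1}$.

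The main obstacle is making the absorption of the advective term $\beta\int_0^L u_x\mu\,\de x$ uniform for all $\beta\in(0,1]$ without assuming $\beta$ small relative to $L$. The crude Young bound produces $\beta^2L^2\|u_x\|^2$, which need not be dominated by $\kappa c\,\cE$. If that happens, I would first try the alternative splitting $\int_0^L u_x\mu=\int_0^L u_x(-u_{xx})+\int_0^L u_x\,\partial_sW$. Here the first term equals $-\frac12u_x(L)^2+\frac12u_x(0)^2=\frac12 u_x(0)^2$, and the second equals a boundary term $W(L,u(L)+c_0)-W(0,c_0)$ minus $\int_0^L \partial_xW$. These terms are controlled by $W(L,\cdot)\le$ a polynomial in $\|u\|_\infty$ and by $u_x(0)^2\lesssim\|u\|_{H^2}^2$, which is in turn interpolated against $\|\mu\|_V$. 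Finally, all computations are justified on $[t_0,T]$ with $t_0>0$, where the solution is strong by Lemma~\ref{lem:instantaneous_smoothing}, and the estimate is then passed to the limit $t_0\to0$.
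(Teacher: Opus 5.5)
Your overall architecture matches the paper's. The functional $\cE+\frac\kappa2\|(-\Delta)^{-1/2}u\|^2$ is, up to the weight and the $\zeta$-term, the paper's $\cE_1$ in \eqref{E1}. Your $V'$ identity is its Step~2 (pairing \eqref{eqn:hcch2} with $u$), and your Gronwall closure is its Step~3.

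The real difference is the energy part. The paper pairs \eqref{eqn:hcch2} with $u_t$, so its dissipation is $\|(-\Delta)^{-1/2}u_t\|^2$ and the advection appears as $\beta\langle(-\Delta)^{-1}u_x,u_t\rangle$. It trades this for $\frac14\|(-\Delta)^{-1/2}u_t\|^2+\beta\|u\|^2$, a lower-order term absorbed by the quartic part. You instead use the exact identity \eqref{r-dE/dt}, with dissipation $\|\mu\|_V^2$ and Poincar\'e for $\mu$. Note that this step rests on $\mu(0)=0$, not on $u(0)=0$. It leaves $\beta^2L^2\|u_x\|^2$, which is of the same order as the energy.

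Your route has two advantages: it uses only what Proposition~\ref{BDM_thm2.4} provides, and it keeps $\zeta$ inside $W$. The paper's shortcut is more delicate than it looks. Solving $-\phi_{xx}=u_x$, $\phi(0)=0=\phi_x(L)$ gives $\phi_x=u(L)-u$, hence $\|(-\Delta)^{-1/2}u_x\|=\|u-u(L)\|$, which is not bounded by $\|u\|$. The boundary value $u(L)$ therefore produces an $\|u_x\|^2$-type contribution that must be absorbed exactly as in your argument. Similarly, \eqref{ineq2} needs $\zeta\in V$, whereas $\zeta(0)\neq0$.

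The obstacle you flag at the end is not real, and the boundary-term splitting is unnecessary. The coefficient $\beta^2L^2/2\le L^2/2$ coming from \eqref{r-dE/dt} does not depend on $\kappa$. The $\kappa$-weighted $V'$ identity, by contrast, puts $\kappa\bigl(\|u_x\|^2+\int_0^L\partial_sW(x,u+c_0)\,u\,\de x\bigr)$ on the left. So any $\kappa\ge 2L^2$ works for every $\beta\in(0,1]$.

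The one step to get right is the right-hand side of the $V'$ identity. Bound it by $\kappa\beta L^2\|u_x\|\,\|u\|\le\frac\kappa2\|u_x\|^2+\frac{\kappa L^4}2\|u\|^2$, and absorb $\|u\|^2$ into $\|u+c_0\|_{L^4}^4$, as in \eqref{temp2}. Do not use $\|u\|_{V'}\lesssim\|u_x\|$ there, as your sketch suggests. If you do, both sides scale with $\kappa$, and you are left with a smallness condition of the type $\beta L^3\lesssim1$, which is exactly what a bound uniform in $\beta\in(0,1]$ must avoid.

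Your other alternative also closes, even without the $V'$ term. From $\|u_x\|^2+\frac12\|u+c_0\|_{L^4}^4\le\|\mu\|\,\|u\|+C(L)$ you get $\|u_x\|^2+\|u+c_0\|_{L^4}^4\le C(L)\bigl(1+\|\mu\|_V^{4/3}\bigr)$. Hence $\frac{\beta^2L^2}{2}\|u_x\|^2\le\frac14\|\mu\|_V^2+C(L)$ and $\|\mu\|_V^2\ge c(L)\,\cE-C(L)$, and Gronwall applies directly to $\cE$.
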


\begin{proof} We will proceed in a number of steps developing a priori estimates.

\noindent{\it Step 1.} We claim that weak solutions, in the sense of Definition \ref{df-weak}, to equation (\ref{eq2}) with $u_0\in V$ 
fulfill, for every $\beta\leq1$,
\begin{equation}\label{eqn:1st_est}
 \frac{\de}{\de t} \bigg[ \int_{0}^{L} W_0(u+c_0)\, \de x + \frac{1}{2} \| u_x \|^2  \bigg]+ \frac{1}{2} \| (-\Delta)^{-1/2} u_t \|^2  \leq  \beta\|u\|^2 + \norm{\zeta_x}^2,
\end{equation}
where we recall that $W_0$ is the classical double-well potential defined in \eqref{df-W}.
Indeed, application of the integral operator $(-\Delta)^{-1} \colon L^2 \to  D(-\Delta)\subset H^2$ to both sides of equation \eqref{eq2}    
yields 
\begin{equation}
(-\Delta)^{-1} u_t - u_{xx} + (u+c_0)^3 - (u+c_0) +\zeta -
\beta (-\Delta)^{-1} u_x      = 0 \,,  \label{eqn:hcch2} 
\end{equation}
Since $(-\Delta)^{-1} u_t$ and $u_{xx}$ belong to $H^1$, we may pair the above equation with $u_t$\,. 
Next, integration by parts and  rearranging yield 
\begin{equation}\label{ineq0}
\begin{split}
\frac{\de}{\de t} \left[ \frac{1}{4} \| u+c_0 \|_{L^4}^4  - \frac{1}{2} \| u+c_0 \|^2 + \frac{1}{2} \| u_x \|^2  \right]  + \| (-\Delta)^{-1/2} u_t \|^2 \leq  
\beta |\dup{(-\Delta)^{-1} u_x}{ u_t}|+|\dup{\zeta}{u_t} &\,|, 
\end{split}
\end{equation}
where $\langle\cdot,\cdot\rangle$ denotes the duality pairing between $V$ and $V'$.
We notice that the right-hand side can be handled by using the Cauchy inequality with~$\epsilon=1/2$; indeed,
\begin{subequations}\label{ineq_proof}
\begin{equation}\label{ineq1}
\begin{split}
|\dup{(-\Delta)^{-1} u_x}{ u_t}| = | \dup{(-\Delta)^{-1/2} u_x}{(-\Delta)^{-1/2} u_t}| &\le
\frac{1}{4} \norm{(-\Delta)^{-1/2} u_t}^2 + \norm{(-\Delta)^{-1/2} u_x}^2\\
&\le \frac1{4} \norm{(-\Delta)^{-1/2} u_t}^2 + \norm{ u}^2,
\end{split}
\end{equation}
the last inequality being justified by having $\|(-\Delta)^{1/2}\|=1$, which follows from \eqref{r-pol}, and the fact that the norm $u\mapsto \|u_x\|$ on $V$ is equivalent to the standard one in $H^1$.
In an analogous fashion, we estimate
\begin{equation}\label{ineq2}
|\dup{\zeta}{  u_t}| = |  \dup{(-\Delta)^{1/2}\zeta}{ (-\Delta)^{-1/2} u_t}| \le \frac 14 \norm{(-\Delta)^{-1/2} u_t}^2+ \norm{\zeta_x}^2.
\end{equation}
\end{subequations}
By using inequalities \eqref{ineq_proof} in \eqref{ineq0} and taking into account that $\beta\le 1$, we obtain \eqref{eqn:1st_est}. 

\smallskip

\noindent {\it Step 2.}
Estimate \eqref{eqn:1st_est} was a preliminary result. 
The next one will give the first uniform-in-time estimate. 
We claim that the inequality  
\begin{equation}\label{second}
\frac{\de}{\de t}\|(-\Delta)^{-1/2} u\|^2 + \frac{1}{8} \|u+c_0\|_{L^4}^4 + \|u_x\|^2 \leq 2K_1(L) \,
\end{equation}
holds true for all weak solutions $u$ of \eqref{eq2} with $u_0\in V$, where $K_1(L)$ is a positive constant whose explicit expression is given by formula \eqref{r-C2} below.

Indeed, by using $u$ itself as a test function in \eqref{eqn:hcch2}, we obtain
\begin{equation}\label{r-l33}
\begin{split}
0=&\,\dup{(-\Delta)^{-1} u_t}{u}- 
\dup{\Delta u}{ u} + \dup{(u+c_0)^3}{u} - \dup{u+c_0} u +\dup\zeta u - \beta \dup{(-\Delta)^{-1}u_x} u \\
=&\,\dup{(-\Delta)^{-1/2} u_t}{(-\Delta)^{-1/2}u}+\norm{u_x}^2+\norm{u+c_0}^4_{L^4}-\norm{u+c_0}^2+\dup{\zeta}{u+c_0} \\
&\, -\dup{(u+c_0)^3}{c_0}+\dup{u+c_0}{c_0}-\dup{\zeta}{c_0}- \beta \dup{(-\Delta)^{-1}u_x}{u}.
\end{split}
\end{equation}
This is legitimate, because all the ingredients  are in $L^2$. Subsequently, we use a series of estimates based on Young's inequality and on the fact that $|c_0|\leq1$, namely,
\begin{equation}\label{estimates}
\begin{split}
c_0(u+ c_0)^3 \le &\, \frac 18(u+c_0)^4 + 54,
\qquad (u+c_0)^2\le \frac 12(u+c_0)^4 + \frac 12, \\
c_0(u+c_0) \le &\, \frac12 (c_0^2 + (u+c_0)^2)\le
\frac 12 c_0^2 + \frac 14 (u+c_0)^4 + \frac 14 \leq \frac 14 (u+c_0)^4+\frac{3}{4}.
\end{split}
\end{equation}
If we combine these inequalities with \eqref{r-l33} and take into account that $|c_0| <1$, then we reach
\begin{equation}\label{temp0}
\begin{split}
&\, \frac12 \frac{\de}{\de t}\norm{(-\Delta)^{-1/2} u}^2 + \norm{ u_x}^2 + \norm{u+c_0}_{L^4}^4 - \frac12\norm{u+c_0}_{L^4}^4-\frac{L}2 \\
\leq&\, \frac18\norm{u+c_0}_{L^4}^4+54L+\frac14\norm{u+c_0}_{L^4}^4+\frac{3L}4 + |\dup{\zeta}{c_0}|+|\dup{\zeta}{u+c_0}| +\beta |\dup{(-\Delta)^{-1}u_x}{u}|.
\end{split}
\end{equation}
We can now use Young's inequality with $\epsilon=1/8$ and the fact that $|\zeta|\leq1$ to estimate 
$$\zeta(u+c_0)\leq \frac12 |\zeta|^2+\frac12(u+c_0)^2
\leq \frac12 
+\frac12\bigg(\frac{\epsilon}{2}(u+c_0)^4+\frac1{2\epsilon}\bigg)
=\frac1{32}(u+c_0)^4+\frac{5}{2},$$
whence
\begin{equation}\label{temp1}
|\dup{\zeta}{u+c_0}|\leq \frac1{32}\norm{u+c_0}_{L^4}^4+\frac{5L}{2}.
\end{equation}
Moreover, by observing that 
\begin{equation}\label{norm_inv_laplacian}
\norm{(-\Delta)^{-1}f}\leq L^2\norm{f}
\end{equation}
and using Young's inequality with $\epsilon$ and $\eta$, we can estimate,
\begin{equation}\label{temp2}
\begin{split}
|\dup{(-\Delta)^{-1} u_x} u|= &\,|\dup{(-\Delta)^{-1} u_x}{u+c_0} - \dup{(-\Delta)^{-1} u_x}{c_0}| \\
\leq&\, \frac\epsilon2\norm{(-\Delta)^{-1}u_x}^2+\frac1{2\epsilon}\norm{u+c_0}^2+\frac\epsilon2\norm{(-\Delta)^{-1}u_x}^2+\frac1{2\epsilon}\norm{c_0}^2\\
\leq&\, L^4\epsilon\norm{u_x}^2+\frac1{2\epsilon}\frac{\eta}2\norm{u+c_0}^4_{L^4}+\frac1{2\epsilon}\frac1{2\eta}L+\frac1{2\epsilon}L \\
= &\, \frac12\norm{u_x}^2+\frac1{32}\norm{u+c_0}^4_{L^4}+8L^9+L^5,
\end{split}
\end{equation}
after choosing $\epsilon=1/2L^4$ and $\eta=1/16L^4$.
By using inequalities \eqref{temp1} and \eqref{temp2} in \eqref{temp0} and rearranging terms, we get
\begin{equation}\label{r-C2}
\begin{split}
\frac12\frac{\de}{\de t}\norm{(-\Delta)^{-1/2}u}^2 + \frac12 \norm{u_x}^2 + \frac1{16}\norm{u+c_0}_{L^4}^4 \leq &\, \frac{231}{4}L+L^5+8L^9 + |\dup{\zeta}{c_0}| \\
\leq &\, \frac{235}{4}L+L^5+8L^9 \eqqcolon K_1(L),
\end{split}
\end{equation}
which proves \eqref{second}.

\smallskip

\noindent{\it Step 3.}
Now we are able to prove the existence of an absorbing set. 
To do so, we define the ``energy'' 
\begin{equation}\label{E1}
\cE_1(t) \coloneqq \int_\Omega W_{0}(u+c_0)\, \de x + \frac{1}{2}\|u_x\|^2 +  
\|(-\Delta)^{-1/2} u\|^2. 
\end{equation}
We will prove that $\cE_1(t)$ is bounded if $t$ is larger than a certain $t_B>0$ that only depends on the data of the problem.

By adding estimate \eqref{second} to estimate \eqref{eqn:1st_est}, we obtain 
\begin{align*}
\frac{\de}{\de t} \cE_1(t) & + \theta \cE_1(t) - \theta \biggl(\int_\Omega W_{0}(u+c_0)\, \de x + \frac{1}{2}\|u_x\|^2 +  
\|(-\Delta)^{-1/2} u\|^2 \biggr) \\
& + \frac 18 
\norm{u+c_0}_{L^4}^4 + 
\norm{u_x}^2 +\frac12\|(-\Delta)^{-1/2}u_t\|^2\leq \beta \norm{u}^2 + \|\zeta_x\|^2 + 2K_1(L).
\end{align*}
Here we added and subtracted a small fraction of $\cE_1$ ($\theta>0$). 
A rearrangement yields
\begin{align*}
&\, \frac{\de}{\de t} \cE_1(t) + \theta \cE_1(t)  + \bigg(\frac18-\frac{\theta}{4}\bigg)\norm{u+c_0}_{L^4}^4 + \bigg(1-\frac{\theta}{2}\bigg) \norm{u_x}^2  \\ 
\leq &\,  \theta \|(-\Delta)^{-1} u\|^2 + \frac{L\theta}{4}
+\beta\norm{u}^2+\norm{\zeta_x}^2+ 2K_1(L)
\leq (\theta L^4 +1)\norm{u}^2+\frac{L\theta}4+\norm{\zeta_x}^2+2K_1(L),
\end{align*}
where we used \eqref{norm_inv_laplacian} and $\beta\leq1$.
By Young's inequality with $\epsilon=1/16$, we obtain
$$\norm{u}^2\leq \frac{1}{16}\norm{u+c_0}_{L^4}^4+18L.$$
Thus, recalling 
\eqref{norm_inv_laplacian}, we estimate
\begin{align*}
&\, \frac{\de}{\de t} \cE_1(t) + \theta \cE_1(t)  + \bigg(\frac18-\frac{\theta}{4}\bigg) \norm{u+c_0}_{L^4}^4 + \bigg(1-\frac{\theta}{2}\bigg) \norm{u_x}^2  \\ 
\leq &\, \frac{\theta L^4+1}{16} \norm{u+c_0}_{L^4}^4 + \biggl[18(\theta L^4+1)+\frac\theta4\biggr]L +\norm{\zeta_x}^2+2K_1(L)
\end{align*}
By choosing $\bar\theta=(L^4+4)^{-1}$, the term with $\norm{u+c_0}_{L^4}^4$ vanishes and we can neglect the term containing $\norm{u_x}^2$, since $\bar\theta<2$; we obtain
$$
\frac{\de}{\de t}\cE_1(t)+\frac1{L^4+4}\cE_1(t) \leq \frac{L(144(L^4+2)+1)}{4(L^4+4)}+\norm{\zeta_x}^2+2K_1(L)\eqqcolon K_2(L,\zeta),
$$
so that Gronwall inequality leads to
\begin{equation}\label{E1_decrease}
 \cE_1(t) \leq \bigl(\cE_1(0) -  (L^4+4) K_2(L,\zeta)  \bigr) e^{- t/(L^4+4)} + (L^4+4) K_2(L,\zeta)  
\end{equation}
and this is enough to conclude. 
Indeed, it is easy to see that there exists $t_B>0$ depending on $\cE_1(0)$, $L$, and $\zeta$ such that $\cE_1(t)<(L^4+4)K_2(L,\zeta)+1\eqqcolon M_1(L,\zeta)$ for all $t\ge t_B$\,.
\end{proof}

The proof we presented above shows a uniform-in-$\beta\le 1$ bound on the absorbing set and time~$t_B$\,. 
However, we need uniform estimates on $u$ in higher Sobolev norm. The constant variation formula \eqref{r-convar} yields bounds on $w = u-\eta$, hence we have them for $u$ too.

\begin{lemma}\label{l-H2}
Let us suppose that $w_0\in B\subset V$, where $B$ is bounded, and that $t\ge t_2 \coloneqq t_B+1$, with~$t_B$ provided by Proposition~\ref{H1absorb1D}. 
Then there exists a positive constant $M_2(B,L,\eta,\zeta)$, depending only on $B$, $L$,  $\eta$, and $\zeta$, such that $S_\beta (t)w_0 \in H^2(0,L)$ and $\|w(t)\|_{H^2(0,L)}\le M_2(B,L,\eta,\zeta)$.
\end{lemma}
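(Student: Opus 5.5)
We derive the $H^2$ bound from the variation of constants formula \eqref{r-convar}, taking as initial time a point $t_0\ge t_B$ so that the $H^1$ bound of Proposition~\ref{H1absorb1D} is already available. For $t\ge t_B$ we have $\|u(t)\|_{H^1(0,L)}\le M_1(L,\zeta)$, and therefore $\|w(t)\|_{H^1}\le M_1+\|\eta\|_{H^1}$, uniformly in $\beta\in(0,1]$. Fix $t\ge t_2=t_B+1$ and set $t_0=t-1\ge t_B$. As discussed before \eqref{r-convar}, we may have to move $t_0$ slightly inside $[t_B,t_B+1/2]$ so that $u(t_0)\in\scrH^{1/2}$; this is harmless, since then $t-t_0\in[1/2,1]$. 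In any case the formula for $w(t)$ holds on $[t_0,t]$. We apply $A^{1/2}$, whose graph norm is equivalent to the $H^2$ norm on $\scrH^{1/2}$, and estimate the terms one at a time.

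\emph{Linear terms.} By \eqref{r-spec}, $\|A^{1/2}e^{-A(t-t_0)}w(t_0)\|\le C_{1/2}(t-t_0)^{-1/2}\|w(t_0)\|\le \sqrt2\,C_{1/2}(M_1+\|\eta\|)$. The term $(Id-e^{-A(t-t_0)})\eta$ should be handled in the first line of \eqref{r-convar}, not the second, because $\eta$ need not lie in $D(A)$. There we keep $-\Delta^2\eta-\beta\eta_x$ inside the integral as a fixed smooth forcing $g$. Its contribution is bounded by $\int_{t_0}^t C_{1/2}(t-\sigma)^{-1/2}\|g\|\,\de\sigma\le 2C_{1/2}\|g\|$. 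Equivalently, one can interpret $\Delta^2\eta$ in the extrapolated sense. The drift term contributes at most $\beta\int_{t_0}^t C_{1/2}(t-\sigma)^{-1/2}\|w_x\|\,\de\sigma\le 2C_{1/2}(M_1+\|\eta\|_{H^1})$.

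\emph{Nonlinear term; this is the main obstacle.} The difficulty is $\Delta\partial_sW(x,w+\eta+c_0)$: it contains second derivatives of $w$, which are only controlled in $L^2$ through the $H^1$ bound, so it does not lie in $L^2$. I would rewrite $A^{1/2}e^{-A(t-\sigma)}\Delta \phi = -A^{1/2}e^{-A(t-\sigma)}A^{1/2}\phi'$, where $\phi=\partial_sW(x,w+\eta+c_0)-\phi_b$ and $\phi_b$ is a smooth correction that makes $\phi$ satisfy the boundary conditions of $\scrH^{1/2}$. Such a correction is possible because of the choice of $\eta$ and the condition $\mu(0)=\mu_x(L)=0$. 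The operator norm is then $\|A^{3/4}e^{-A(t-\sigma)}\|\,\|A^{1/4}\phi\|$, and $\|A^{1/4}\phi\|\approx\|\phi_x\|$. In one dimension, $H^1\hookrightarrow L^\infty$, so $\|\phi_x\|\le C(1+\|w+\eta+c_0\|_{L^\infty}^2)\|w+\eta\|_{H^1}+C$, which is bounded by a constant depending on $M_1$, $\eta$, $\zeta$. The kernel $(t-\sigma)^{-3/4}$ is integrable, so the contribution is at most $4C_{3/4}\,C(M_1,\eta,\zeta)$.

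If the boundary matching of $\phi$ turns out to be awkward, my first fallback is to work in the extrapolation scale: use that $\Delta\colon H^1\to(\scrH^{1/4})'$ is bounded and that $A^{1/2}e^{-A s}$ maps $\scrH^{-1/4}$ to $\scrH$ with norm at most $C s^{-3/4}$. Summing the three pieces gives $\|w(t)\|_{H^2}\le M_2(B,L,\eta,\zeta)$, independent of $t\ge t_2$ and of $\beta\le1$. The dependence on $B$ enters only through $t_B$.
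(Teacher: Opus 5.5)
Your argument is correct and is essentially the paper's proof. Both use the variation-of-constants formula \eqref{r-convar}, apply $A^{1/2}=-\Delta$, control the linear and drift terms with \eqref{r-spec}, and treat $\Delta\partial_sW$ by shifting one $A^{1/2}$ onto it, so that only $\|(\partial_sW)_x\|$ (bounded via the $H^1$ absorbing ball and $H^1\hookrightarrow L^\infty$) meets the integrable kernel $(t-\sigma)^{-3/4}$.

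Your version is somewhat more careful than the paper's: you use a unit window $[t-1,t]$ instead of $[t_B,t]$ with exponential decay, keep $\eta_{xxxx}+\beta\eta_x$ as a fixed forcing since $\eta\notin D(A)$ in general, and make the time-independent boundary correction $\phi_b$ explicit (its leftover $\Delta\phi_b$ is just another smooth forcing term). There is one harmless slip: if $t_0$ must be adjusted, choose it in $[t-1,t-1/2]$, not $[t_B,t_B+1/2]$, which is what actually guarantees $t-t_0\in[1/2,1]$.
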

\begin{proof} 
It is enough to apply $\Delta$ to both sides of \eqref{r-convar} to estimate the second-order derivative of $w$. 
If we take $\lambda \in (0, \inf \sigma(\Delta^2))$, then estimate \eqref{r-spec} yields 
\begin{align*}
\| \Delta w(t)\|
\le &\, \| \Delta( e^{-\Delta^2 (t-t_{B})}(w-\eta) + \eta)\| \\ 
& +\int_{t_{B}}^t \bigl\| \Delta\bigl( e^{-\Delta^2 (t-\sigma)}\bigl( \Delta \partial_s W(x, w + \eta + c_0) + \beta w_x - \beta \eta_x\bigr)\bigr)\bigr\|\, 
\de\sigma
\eqqcolon  I_1(t) + I_2(t).
\end{align*}
It is easy to see that due to \eqref{E1_decrease} we have the following estimate, where we used \eqref{r-spec} again, 
\begin{align*}
I_1(t) &\le C_{1/4} (t-t_{B})^{-1/4} e^{-\lambda (t-t_{B})}\| w_0\|_{H^1} + C_{1/2} (t-t_{B})^{-1/2}e^{-\lambda (t-t_{B})}\|\eta\|
+ \|\eta_{xx}\| \\ 
& \le C_{1/4} \| w_0\|_{H^1} + C_{1/2} \|\eta\|
+ \|\eta_{xx}\| \eqqcolon c_0(B,L,\eta), 
\end{align*}
for every $t\ge t_2= t_B+ 1$. Moreover,
\begin{align*}
I_2(t) \le &\, \int_{t_B}^{\infty}\| \Delta( e^{-\Delta^2 (t-\sigma)} \Delta \partial_s W(x, w + \eta + c_0)\big) \| 
\,\de\sigma 
+ \int_{t_B}^{\infty}\| \Delta\bigl( e^{-\Delta^2 (t-\sigma)}(\beta w_x - \beta \eta_x )\bigr)\|
\,\de\sigma\\
\eqqcolon &\,  J_1(t) + J_2(t).
\end{align*}
Again, since for $t\ge t_B$ the solution lies in the absorbing set, it is easy to see that
$$
J_2(t) \le \beta  \int_{t_B}^{\infty} C_{1/2} \frac{e^{-\lambda (t-s)}}{(t-s)^{1/2}} (M_1(L,\zeta)
+ \|\eta_x\|
)\,\de\sigma \eqqcolon  c_1(B,L,\eta,\zeta) .
$$
Finally, we estimate $J_1$\,; for this purpose we notice that 
$$
\big\| \big((w+\eta+ c_0)^3\big)_x\big\|
\le 3\big(LM_1(L,\zeta) 
+(\|\eta\|_{L^\infty}+ 1)^2\big) (\| w_x\|
+\| \eta\|
) \eqqcolon c_2(B,L,\eta,\zeta),
$$
where we also used that $\|w\|_{L^\infty}\le L^{1/2}\| w_x\|$ for $w\in V$.

Our claim follows by combining these estimates with the bound in $V$, {\it i.e.}, 
$$
M_2(B,L,\eta,\zeta) \coloneqq 
\sqrt{M_1(L,\zeta)^2 + c_0(B,L,\eta)^2 +
c_1(B,L,\eta,\zeta)^2 +c_2(B,L,\eta,\zeta)^2}.
$$
The proof is concluded.
\end{proof}

The above lemma implies that the absorbing set in $V$ is compact. We combine it with the following well-knows fact.
\begin{proposition} (see \cite[Theorem 1]{lukasz})\label{p-at}
Let us suppose that  a strongly  continuous semigroup $S(\cdot)$ on $Z$
has a compact attracting set $K$. Then there is a compact global attractor for $S(\cdot)$ and $\cA = \omega(K)$.
\end{proposition}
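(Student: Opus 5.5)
The plan is the classical construction of the global attractor as the $\omega$-limit set of a compact attracting set. I take $Z$ a complete metric space, here $V$, and assume $S(t)$ is continuous on $Z$ for each $t$ with $S(t+s)=S(t)S(s)$.

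\emph{Step 1: define the candidate.} Set
\[
\omega(K)=\bigcap_{s\ge0}\overline{\bigcup_{t\ge s}S(t)K}.
\]
Equivalently, $\omega(K)$ is the set of limits $\lim_n S(t_n)x_n$ with $t_n\to\infty$ and $x_n\in K$.

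\emph{Step 2: nonemptiness and compactness.} Since $K$ attracts itself, $\dist(S(t_n)x_n,K)\to0$. Choose $k_n\in K$ with $d(S(t_n)x_n,k_n)\to0$. Compactness of $K$ gives a convergent subsequence of $(k_n)$, hence of $(S(t_n)x_n)$, and the limit lies in $K$. This yields $\omega(K)\ne\emptyset$ and $\omega(K)\subset K$. Being closed in a compact set, $\omega(K)$ is compact.

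\emph{Step 3: invariance.} For $S(t)\omega(K)\subset\omega(K)$: if $y=\lim S(t_n)x_n$, then by continuity of $S(t)$ we get $S(t)y=\lim S(t+t_n)x_n$. For the reverse inclusion, take $y=\lim S(t_n)x_n$ and write $S(t_n)x_n=S(t)\bigl(S(t_n-t)x_n\bigr)$. By the precompactness argument of Step 2, $S(t_n-t)x_n$ has a subsequence converging to some $z\in\omega(K)$. Continuity then gives $S(t)z=y$.

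\emph{Step 4: attraction of bounded sets.} This is the main obstacle. Suppose a bounded set $B$ satisfies $\dist(S(t_n)b_n,\omega(K))\ge\varepsilon$ with $t_n\to\infty$. Because $K$ attracts $B$, $S(t_n)b_n$ approaches $K$, so a subsequence converges to some $y\in K$. The difficulty is to show $y\in\omega(K)$, since $y$ arises from $b_n\in B$ and not from $K$. I would handle it as follows. Split $t_n=s+r_n$ with $s$ large. Then $S(r_n)b_n$ is close to $K$; pick $k_n\in K$ nearby, and let $k_n\to k$ along a subsequence.

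Uniform continuity of $S(s)$ on a compact neighborhood of $K$ gives $S(t_n)b_n\approx S(s)k_n\approx S(s)k$. A diagonal argument with $s=s_m\to\infty$ then makes $y$ a limit of $S(s_m)k'_m$ with $k'_m\in K$, so $y\in\omega(K)$, a contradiction. The uniform-continuity step needs care: $S(r_n)b_n$ lies only near $K$, so I would use continuity of $S(s)$ at each point of $K$ together with a finite cover of $K$.

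\emph{Step 5: maximality.} Any compact invariant set $\cA'$ is attracted by $K$, so $\cA'=S(t)\cA'$ lies within distance $\to0$ of $K$, i.e.\ $\cA'\subset K$. Hence $\cA'=S(t)\cA'\subset\omega(K)$. Therefore $\cA=\omega(K)$ is the global attractor.

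In the present application, $K$ is the closure in $V$ of the $H^2$-bounded absorbing set from Lemma~\ref{l-H2}. It is compact by Rellich and absorbing by Proposition~\ref{H1absorb1D}. Continuity of $S_\beta(t)$ on $V$ follows from the continuous dependence in Proposition~\ref{BDM_thm2.4}.
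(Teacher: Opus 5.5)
Your argument is correct. The paper itself does not prove this statement: it quotes \cite[Theorem 1]{lukasz} and uses it in Corollary~\ref{c-at}. What you give is the standard self-contained $\omega$-limit-set proof of that cited result.

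Compared with the citation, your proof makes explicit exactly which hypotheses are used. These are continuity of each map $S(t)$ on $V$ (here supplied by the Lipschitz dependence in Proposition~\ref{BDM_thm2.4}) and attraction of bounded sets by $K$. The citation only buys brevity.

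Your Step~4 is more complicated than it needs to be. Once you have $k_n\in K$ with $d(S(r_n)b_n,k_n)\to 0$ and $k_n\to k$, you already know that $S(r_n)b_n\to k$. Continuity of $S(s)$ at the single point $k$ then gives $y=\lim_n S(s)S(r_n)b_n=S(s)k\in S(s)K$. Since $s\ge 0$ was arbitrary, $y\in\bigcap_{s\ge 0}S(s)K\subset\omega(K)$. This needs no uniform continuity, no finite cover of $K$, and no diagonal argument. As you noticed, a compact neighbourhood of $K$ does not exist in the infinite-dimensional space $V$ anyway.

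There is one small imprecision in your closing remark on the application. The $H^2$ bound $M_2(B,L,\eta,\zeta)$ of Lemma~\ref{l-H2} depends on the bounded set $B$. To obtain a single compact absorbing set $K$, independent of $B$, apply that lemma with $B$ equal to the absorbing ball $\cB$ of Proposition~\ref{H1absorb1D}, and then use the semigroup property.
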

In this way we deduce the following corollary.
\begin{corollary}\label{c-at}
For all $\beta\in (0,1)$ the semigroup $S_\beta$ has a global attractor, $\cA_\beta$\,, which is compact in $H^1$-topology. Moreover, there exists $R>0$ such that $\cA_\beta$ is contained in the ball $B_V(0;R)$ in the $H^1$-topology. 
\end{corollary}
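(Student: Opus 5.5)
The corollary follows by combining Proposition~\ref{H1absorb1D}, Lemma~\ref{l-H2} and Proposition~\ref{p-at}. Fix $\beta\in(0,1)$ and work in $Z=V$ with the $H^1$-topology.

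\emph{Step 1: strong continuity.} For each $t\ge0$ the map $S_\beta(t)\colon V\to V$ is continuous, by the continuous dependence estimate in Proposition~\ref{BDM_thm2.4}. Continuity in time, $t\mapsto S_\beta(t)u_0\in V$, follows from $u\in H^1(0,T;V')\cap L^2(0,T;H^3)$ and the standard interpolation embedding into $C([0,T];V)$. Alternatively, for $t>0$ one can use the strong-solution regularity \eqref{r-cont} together with the coincidence of weak and strong solutions established before \eqref{r-convar}. So $S_\beta$ is a strongly continuous semigroup on $V$.

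\emph{Step 2: a compact attracting set.} Let $\cB$ be the absorbing ball of Proposition~\ref{H1absorb1D}, with radius $M_1(L,\zeta)$ independent of $\beta\le1$. Define
\[
K \coloneqq \overline{S_\beta(1)\cB}^{\,V}.
\]
Apply Lemma~\ref{l-H2} with $B=\cB$; here $t_B$ depends only on the bound on $\cE_1(0)$, hence only on $\cB$. Together with the invariance of $\cB$ after time $t_B$ (which we may arrange by enlarging the radius slightly), this shows that for $t\ge t_2$ the orbits lie in an $H^2$-ball of radius $M_2$. I would take $K$ to be the $V$-closure of $\{w+\eta\colon \|w\|_{H^2}\le M_2\}\cap V$. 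Since $H^2(0,L)\hookrightarrow H^1(0,L)$ compactly by Rellich, $K$ is compact in $V$. Every bounded $B\subset V$ enters $\cB$ by time $t_B(B)$, and then enters $K$ after one more unit of time plus $t_2$. Hence $K$ is absorbing, and in particular attracting.

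\emph{Step 3: conclusion.} Proposition~\ref{p-at} now gives a compact global attractor $\cA_\beta=\omega(K)$. Since $\cA_\beta$ is invariant and contained in the closed absorbing ball $\cB$, we get $\cA_\beta\subset B_V(0;R)$ with $R\coloneqq M_1(L,\zeta)$, up to equivalence of norms on $V$. This $R$ is independent of $\beta$.

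\emph{Main obstacle.} The delicate point is Step~2: the $H^2$ bound of Lemma~\ref{l-H2} must hold uniformly for initial data in $\cB$, and $\cB$ must be (eventually) positively invariant so that the bound applies at all large times. I would handle this by restarting the variation-of-constants formula \eqref{r-convar} at $t_0=t-1$ for each $t\ge t_B+1$. This uses only the $H^1$ bound $M_1$ on $[t-1,t]$, and it avoids any delicacy about the initial time, since $t_0>0$ as required there.
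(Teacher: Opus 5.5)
Your proposal is correct and follows essentially the same route as the paper. You combine the $\beta$-independent absorbing ball of Proposition~\ref{H1absorb1D} with the $H^2$ bound of Lemma~\ref{l-H2} and Rellich to get a compact absorbing set in $V$, then apply Proposition~\ref{p-at}, and read off $R$ from the absorbing ball. The differences are minor: you derive strong continuity from Proposition~\ref{BDM_thm2.4} rather than citing \cite[Theorem 6.5]{BonDavMor2018}, and you spell out, via the semigroup property and restarting \eqref{r-convar}, why the $H^2$ radius does not depend on the initial bounded set, a detail the paper leaves implicit.
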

\begin{proof}
Proposition \ref{H1absorb1D} combined with Lemma \ref{l-H2} imply that the absorbing set is compact in the  $H^1$-topology. The fact that $S_\beta$ is a strongly continuous semigroup in $V$ was established in the course of proof of \cite[Theorem 6.5]{BonDavMor2018}. Hence, Proposition \ref{p-at} yields existence of a global attractor for each $\beta\in (0,1].$ The uniform bound on the attractor in  $H^1$-topology follows from Proposition~\ref{H1absorb1D}.
\end{proof}

We finally note the following uniform bounds, away from initial time.
\begin{proposition}[Uniform higher-order bounds after an arbitrary delay]\label{p-hk-gw}
Let $B\subset V$ be bounded and let $t_B$ be the entering time of the absorbing
ball provided by Proposition~\ref{H1absorb1D}. Then, for every $\tau>0$ and every
$m\ge 2$, there exists a positive constant $M_{m,\tau}=M_{m,\tau}(B,L,\eta,\zeta)$ depending on $B$, $L$, $\eta$, and $\zeta$, such that, for all $\beta\in[0,1]$, all $u_0\in B$, and all $t\ge t_B+\tau$,
\[
\|S_\beta(t)u_0\|_{H^m(0,L)}\le M_{m,\tau}.
\]
\end{proposition}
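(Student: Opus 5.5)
We argue by induction on $m$, exploiting the variation of constants formula \eqref{r-convar} and the smoothing estimate \eqref{r-spec}; the key point is that every constant must be independent of $\beta\in[0,1]$ and of $t\ge t_B+\tau$. The base case is the $H^1$ bound given by Proposition~\ref{H1absorb1D}: for $t\ge t_B$ we have $\|u(t)\|_{H^1}\le M_1(L,\zeta)$, uniformly in $\beta\le1$ and $u_0\in B$. This gives in particular a uniform $L^\infty$ bound $\|u\|_{L^\infty}\le L^{1/2}\|u_x\|$.

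To climb in regularity, we fix $\tau>0$ and split $[\tau/2,\tau]$ into finitely many steps, say $\tau_k = \tau/2 + k\tau/(2N)$ for a suitable number $N$ of steps. At each step we restart \eqref{r-convar} at the initial time $t_0 = t-\delta$, where $\delta=\tau/(2N)$ and $t\ge t_B+\tau_k$; the restart is legitimate because the weak and strong solutions coincide on $(0,\infty)$. We then apply $A^{\alpha}$ to the formula. The linear term satisfies $\|A^\alpha e^{-A\delta}w(t_0)\|\le C_\alpha\delta^{-\alpha}\|w(t_0)\|$, which is uniformly bounded once $\delta$ is fixed. The $\eta$-terms are harmless because $\eta$ is smooth. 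In the Duhamel integral we write
\[
A^\alpha e^{-A(t-\sigma)}F_\beta(w)=A^{\alpha-\gamma}e^{-A(t-\sigma)}A^{\gamma}F_\beta(w),
\]
where $\gamma$ is chosen so that $A^{\gamma}F_\beta(w)$ is controlled by the current Sobolev bound. Since $F_\beta$ involves at most two derivatives of a cubic polynomial in $w$, plus $\beta w_x$ with $\beta\le1$, and $H^k(0,L)$ is a Banach algebra for $k\ge1$, a uniform $H^k$ bound on $w$ yields a uniform $H^{k-2}$ bound on $F_\beta(w)$. This is exactly the computation \eqref{eq_banach_algebra_etc}, now with constants depending only on $M_1$, $L$, $\eta$ and $\zeta$, and not on $\beta$. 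Taking $\alpha-\gamma<1$, say equal to $3/4$ as in the proof of Lemma~\ref{lem:instantaneous_smoothing}, the integral is bounded by $C\delta^{1/4}$ times that bound. Each step therefore gains one derivative (from $H^k$ to $H^{k+1}$), and the first step gains from $H^1$ to $H^2$, in the same way as in Lemma~\ref{l-H2}. Since $\delta$ depends only on $\tau$ and $m$, after $m-1$ steps we obtain $M_{m,\tau}$ valid for all $t\ge t_B+\tau$.

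The main obstacle is technical and concerns the identification of Sobolev norms with the norms $\|A^{\alpha}\cdot\|$ of fractional powers. For $\alpha$ large, $D(A^\alpha)$ imposes higher compatibility boundary conditions, and these need not hold for $F_\beta(w)$. I would handle this by working with $\alpha\le 1$ on $w$: this gives $H^4$ via $w\in D(A)$, with $w\in D(A)$ guaranteed by \eqref{r-cont}. For higher regularity, I would differentiate in time: $v=w_t$ solves a linearized equation of the same type, and applying the same argument to $v$ bounds $w_t$ in $H^4$. Feeding this back into the stationary identity $Aw=F_\beta(w)-w_t$ then yields $w\in H^8$ by elliptic regularity of $\Delta^2$ with the given boundary conditions. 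Iterating this procedure gives every $m$, with all constants uniform in $\beta\in[0,1]$ because $\beta$ enters only through the bounded term $\beta w_x$.
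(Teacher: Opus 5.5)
Your proposal is correct, and its decisive step takes a different route from the paper's proof, one that is more careful than the paper's. Your first two paragraphs reproduce the paper's argument. Both restart \eqref{r-convar} on short intervals; the paper uses geometric steps $\tau_j=2^{-j}\tau$ so that one sequence of restarts serves every $m$, and your $m$-dependent equal steps do the same job. Both then apply a fractional power of $A$, control $F_\beta(w)$ by the Banach-algebra estimate, and get $\beta$-uniformity because $\beta$ only multiplies a lower-order term.

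The genuine difference is your last paragraph. The paper applies $A^{(m+1)/4}$ for every $m$ and replaces $\|A^{m/4}w\|$, $\|A^{(m-2)/4}F_\beta(w)\|$ by $\|w\|_{H^m}$, $\|F_\beta(w)\|_{H^{m-2}}$. This tacitly assumes the compatibility conditions built into $D(A^\alpha)$, and they fail. From $u(0,t)=0$, $\mu(0,t)=0$ and \eqref{df-Fbe},
\[
F_\beta(w)(0,t)=(3c_0^2-1)\bigl(c_0^3-c_0+\zeta(0)\bigr)+\zeta_{xx}(0)-\eta_{xxxx}(0)+6c_0\,u_x(0,t)^2-\beta\,u_x(0,t),
\]
which is in general nonzero. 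Hence $F_\beta(w)\notin D(A^\theta)$ for $\theta>1/8$. Since $w_{xxxx}(0,t)=F_\beta(w)(0,t)$, also $w(t)\notin D(A^{5/4})$. So the paper's estimates are unjustified as written from the step $m=3$ onwards. Bounds beyond $H^{9/2}$ cannot be obtained inside the fractional-power scale at all.

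Your route is the standard cure:
\begin{itemize}
\item stop the semigroup bootstrap at $D(A)$;
\item bound $v=w_t$ in $H^4$ via $v_t+Av=\Delta\bigl(\partial_s^2W(x,u+c_0)\,v\bigr)-\beta v_x$, starting from $\|w_t(t_0)\|\le\|Aw(t_0)\|+\|F_\beta(w(t_0))\|$;
\item recover spatial regularity from $\partial_x^4w=F_\beta(w)-w_t$, which is mere integration in one dimension;
\item iterate with $w_{tt},\dots$.
\end{itemize}
It costs time-regularity of $w$ (available since $F_\beta$ is polynomial, hence analytic) and one extra linear estimate per iteration. In exchange it gives every $m$ rigorously, with constants uniform in $\beta$ and $t$.

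When you write it out, note a problem in the step $H^3\to H^4$: even with $\alpha=1$ you cannot take $\alpha-\gamma=3/4$, because $D(A^{1/4})$ requires $F_\beta(w)(0)=0$. There are two fixes:
\begin{itemize}
\item take $\gamma<1/8$, so that $\alpha-\gamma\in(7/8,1)$ is still integrable; or
\item use the divergence form $F_\beta(w)=\Delta h-\beta(w_x+\eta_x)$ with $h=\partial_sW(x,u+c_0)-\eta_{xx}$, which satisfies $h(0)=0$ and $h_x(L)=0$ precisely because of the choice of $\eta$.
\end{itemize}
The divergence structure is also what makes the first step $H^1\to H^2$ work, where $F_\beta(w)\notin L^2$. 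It survives in the $v$-equation, since $g=\partial_s^2W(x,u+c_0)\,v$ satisfies $g(0)=0$ and $g_x(L)=0$.
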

\begin{proof}
Let $w(t)=S_\beta(t)u_0-\eta$.
Fix $\tau>0$ and set
\[
\tau_j \coloneqq 2^{-j}\tau,
\qquad
s_n \coloneqq \sum_{j=1}^n \tau_j.
\]
Then $s_n<\tau$ for every $n\in\bN$ and $s_n\uparrow\tau$.

By Proposition~\ref{H1absorb1D}, there exists $M_1(L,\eta,\zeta)>0$ such that
\[
\sup_{\beta\in[0,1]}\ \sup_{u_0\in B}\ \sup_{t\ge t_B}\|w(t)\|_{H^1(0,L)}\le M_1(L,\eta,\zeta).
\]

We start with the case $m=2$.
Arguing exactly as in the proof of
Lemma~\ref{l-H2}, but writing \eqref{r-convar} on the interval
$[t-\tau_1,t]$ instead of $[t_B,t]$, we find a positive constant $M_{2,\tau}=M_{2,\tau}(\PR{B},L,\eta,\zeta)$ such
that
\[
\sup_{\beta\in[0,1]}\ \sup_{u_0\in B}\ \sup_{t\ge t_B+s_1}
\|w(t)\|_{H^2(0,L)}\le M_{2,\tau}\,.
\]
Since we have an absorbing set in $H^1$, the constant $M_{2,\tau}$ depends on $B$ not on an individual $u_0\in B.$ Neither $M_{2,\tau}$ depends on $t$, because the function $s\mapsto e^{-\lambda s} s^{-\alpha}$ is integrable over $(0,\infty)$, provided that $\lambda>0$ and $\alpha\in (0,1)$.

We now bootstrap.
Assume that for some $m\ge 2$ we already know that there exists a positive constant $M_{m,\tau}=M_{m,\tau}(u_0,L,\eta,\zeta)$ such that
\[
\sup_{\beta\in[0,1]}\ \sup_{u_0\in B}\ \sup_{t\ge t_B+s_{m-1}}
\|w(t)\|_{H^m(0,L)}\le M_{m,\tau}\,.
\]
Fix $t\ge t_B+s_m$\,. 
Then $t-\tau_m\ge t_B+s_{m-1}$\,, so the induction hypothesis is available on the whole interval $[t-\tau_m,t]$.
Arguing as in \eqref{eq_banach_algebra_etc}  we have the existence of a positive constant
\[
\sup_{s\in[t-\tau_m,t]}\|F_\beta(w(s))\|_{H^{m-2}(0,L)}
\le \widetilde{C}_{m,\tau},
\]
where $C_{m,\tau}$ is independent of $t$, $\beta$, and $u_0\in B$. 
Writing \eqref{r-convar} with initial time $t-\tau_m$, applying
$A^{(m+1)/4}$, and using \eqref{r-spec}, we obtain
\begin{align*}
\|w(t)\|_{H^{m+1}(0,L)}
\le &\, C\big\|A^{1/4}e^{-A\tau_m}A^{m/4}w(t-\tau_m)\big\| \\
&\,+ C\int_{t-\tau_m}^t
\big\|A^{3/4}e^{-A(t-s)}A^{(m-2)/4}F_\beta(w(s))\big\|\,\de s \\
\le &\, C\tau_m^{-1/4}\|w(t-\tau_m)\|_{H^m(0,L)}
 + C\int_{t-\tau_m}^t (t-s)^{-3/4}
   \|F_\beta(w(s))\|_{H^{m-2}(0,L)}\,\de s \\
\le &\, M_{m+1,\tau}. 
\end{align*}
The constant $M_{m+1,\tau}$ is independent of $t$,
$\beta$, and $u_0\in B$.

This proves by induction that for every $m\ge 2$,
\[
\sup_{\beta\in[0,1]}\ \sup_{u_0\in B}\ \sup_{t\ge t_B+s_{m-1}}
\|w(t)\|_{H^m(0,L)}\le M_{m,\tau}.
\]
Since $s_{m-1}<\tau$, the same bound holds a fortiori for all $t\ge t_B+\tau$.
Finally, since $\eta$ is smooth, the estimate transfers from $w$ to $u=w+\eta$.
\end{proof}

\section{Stabilization of solutions}\label{stabilization}
\subsection{Tools of the dynamical systems}
Our aim is to recall the weak notion of the gradient flow studied in \cite{CLR}. It is based on the notion of global compact attractor.
\begin{definition}{\rm (\cite[Definition 1.5]{CLR})}
A set $\cA \subset Z$ is the global attractor for a semigroup $S (\cdot) \colon Z \to Z$ if (i) $\cA$ is compact; (ii) $\cA$ is invariant; (iii) $\cA$ attracts each bounded subset of $Z$.
\end{definition}
We may now recall notion of the gradient  flow used in \cite{CLR}.

\begin{definition} \label{defRob}{\cite[Definition 5.3, Definition 5.4, Theorem 5.5]{CLR}}
We say that a semigroup $S$ with a global attractor $\cA$ is a gradient flow with respect to the family $\cS=\{\cE^{0},\ldots,\cE^{k}\}$ of invariant sets provided  that:\\
1) For any global (eternal) solution $\xi \colon \bR \to Z$ taking values in $\cA$, there exist $i,j\in\{0,\ldots, k\}$ such that
$$
\lim_{t\to -\infty} \dist(\xi(t), \cE^{i}) =0\quad\hbox{and}\quad
\lim_{t\to +\infty} \dist(\xi(t), \cE^{j}) =0.
$$
2) The collection $\cS$ contains no homoclinic structures.
\end{definition}

Before we state our main tool we recall that $\dist_\rmH(A,B)$ denotes the  Hausdorff distance between compact sets $A$ and $B$, which is defined with the help of the metric of the ambient space $Z.$ 

The theorem below calls for checking the collective assymptotic compactness. We say that  strongly continuous semigroups $\{S_\beta(\cdot)\}_{\beta}$ on $Z$   are {\it collectively asymptotically compact} provided that for any sequence $\{\beta_n\}_n$\, for any sequence  $t_n\to+\infty$, and any bounded sequence $\{x_n\}\subset Z$ such that  
$\{S_{\beta_n}(t_n)x_n\}_n$ is also bounded, we can show that $\{S_{\beta_n}(t_n)x_n\}_n$ contains a convergent subsequence.

Our main tool is the following theorem.
\begin{theorem}\label{thmRob} {\rm (\cite[Theorem 5.26]{CLR}: Stability of gradient semigroups).}
Let $S_0(\cdot)$ be a semigroup on a Banach space $(Z,\norm{\cdot}_Z)$ that has a global attractor
$\cA_0$ and that is a gradient flow with respect to the finite collection $\cS^0$ of isolated invariant sets $\{\cE_{0}^{0} ,\cE_{0}^{1} ,\ldots, \cE_{0}^{k}\}$. Assume that:
\begin{enumerate}
\item[(a)] for each $\beta\in(0,1]$, $S_\beta(\cdot)$ is a semigroup on $Z$ with a
global attractor $\cA_\beta$;
\item[(b)] $\{S_\beta(\cdot)\}_{\beta\in[0,1]}$ is collectively asymptotically compact and
$\overline{\bigcup_{\beta\in[0,1]}\cA_\beta}$ is bounded;
\item[(c)] $S_\beta(\cdot)$ converges to $S_0(\cdot)$, in the sense that
\[
\norm{S_\beta(t)u - S_0(t)u}_Z \rightarrow 0\qquad\text{ as }\quad\beta\rightarrow0
\]
uniformly for $u$ in compact subsets of $Z$; and
\item[(d)] for $\beta\in(0,1]$, the attractors $\cA_\beta$ contains a finite collection of isolated invariant sets $\cS_\beta = \{\cE_{\beta}^{0} ,\cE_{\beta}^{1},\ldots,\cE_{\beta}^{k}\}$ such that
\[
\lim_{\beta\to0} \dist_{\rmH}(\cE_{\beta}^{j}\,,\cE_{0}^{j}) = 0
\]
and there exist $\eta > 0$ and $\beta_1\in(0,1)$ such that for all $\beta\in(0,\beta_1)$, if $\xi_\beta \colon \bR\rightarrow \cA_\beta$ is a global (or eternal) solution, then
\[
\sup_{t\in\bR} \dist( \xi_\beta(t), \cE_{\beta}^{j})\le\eta 
\quad
\Rightarrow
\quad
\xi_\beta(t)\in \cE_{\beta}^{j}\text{ for all $t\in\bR$.}
\]
\end{enumerate}
Then there exists a $\beta_{2} \in (0,\beta_1)$ such that $\{S_\beta(\cdot)\}_{\beta\in(0,\beta_2)}$ is a gradient semigroup with respect to~$\cS_\beta$\,. 
In particular, for $\beta\in(0,\beta_2)$,
\[
\cA_\beta = \bigcup_{i=1}^k \cW^{\rmu}(\cE_{\beta}^{i})\,.
\]
\end{theorem}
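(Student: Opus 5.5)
The proof is by contradiction, using compactness to pass to the limit $\beta\to0$ and transfer the gradient structure of $S_0(\cdot)$ to $S_\beta(\cdot)$. I would set it up in four steps. Throughout, $\eta>0$ is taken so small that the closed $\eta$-neighbourhoods $\cE_0^j(\eta)$ are pairwise disjoint, and the condition in (d) holds with this $\eta$ (shrinking $\eta$ if needed).

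\emph{Step 1 (upper semicontinuity of attractors).} I would show that $\sup_{a\in\cA_\beta}\dist(a,\cA_0)\to0$ as $\beta\to0$. Take $a_n\in\cA_{\beta_n}$. By invariance, $a_n=S_{\beta_n}(t_n)x_n$ with $x_n\in\cA_{\beta_n}$ and $t_n\to\infty$. The union of the attractors is bounded by (b), so collective asymptotic compactness makes $(a_n)$ precompact. A diagonal argument, combined with (c) applied uniformly on the compact closures so obtained, then produces a bounded eternal solution of $S_0$ through any limit point. That eternal solution lies in $\cA_0$.

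\emph{Step 2 (limits of eternal solutions).} Let $\xi_n\colon\bR\to\cA_{\beta_n}$ be eternal solutions and $s_n\in\bR$ arbitrary times. The same compactness argument shows that a subsequence of $\xi_n(s_n+\cdot)$ converges, uniformly on compact time intervals, to an eternal solution $\xi$ of $S_0$ with values in $\cA_0$.

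From this I would derive a \emph{uniform transit-time bound}. There are $T>0$ and $\beta_*>0$ such that, for $\beta<\beta_*$, no eternal solution in $\cA_\beta$ stays outside $\bigcup_j\cE_0^j(\eta/2)$ on a time interval of length $T$. If this failed, the limit $\xi$ from Step~2 would stay outside all $\cE_0^j(\eta/2)$ for all times. That contradicts property 1) of the gradient flow $S_0$, which forces $\xi$ to approach some $\cE_0^j$ as $t\to\pm\infty$. By (d), $\cE_\beta^j\subset\cE_0^j(\eta/4)$ for small $\beta$, so the same statement holds with the $\cE_\beta^j$.

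\emph{Step 3 (no recurrence; the main obstacle).} This is where I expect the real difficulty. I must exclude that, for small $\beta$, an eternal solution leaves a neighbourhood of some $\cE_\beta^i$ and later returns to it, possibly through a chain of other sets. Such behaviour is exactly a homoclinic structure, or an infinite oscillation preventing convergence.

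I would try the following first. The absence of homoclinic structures for $S_0$ together with finiteness gives a partial order, and in fact a Morse decomposition, on $\cS^0$. Suppose a chain of eternal solutions $\xi_n$ realises a cycle $\cE^{i_0}\to\cE^{i_1}\to\dots\to\cE^{i_0}$. Apply Step~2 at the exit times from each neighbourhood. The limiting eternal solutions of $S_0$ then form an analogous cycle: each leaves $\cE_0^{i_l}(\eta)$, and by property 1) for $S_0$ it must converge to some $\cE_0^{m}$, so the limit chain closes up. The transit-time bound ensures that no part of the limit chain is lost to infinity between consecutive exits, so the cycle does pass to the limit. This yields a homoclinic structure for $S_0$, a contradiction. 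Hence, for $\beta<\beta_2$, every eternal solution visits the neighbourhoods along a strictly decreasing path in this order. It therefore makes only finitely many (at most $k$) transitions.

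\emph{Step 4 (convergence and conclusion).} By Step~3, an eternal solution $\xi_\beta$ is eventually trapped, for $t\ge t_0$, in a single $\cE_\beta^j(\eta)$. Its $\omega$-limit set is nonempty, compact and invariant by (b), and it lies in $\cE_\beta^j(\eta)$. Through each point of it passes an eternal solution that stays within distance $\eta$ of $\cE_\beta^j$. By (d), that solution lies in $\cE_\beta^j$, so $\dist(\xi_\beta(t),\cE_\beta^j)\to0$ as $t\to+\infty$; the case $t\to-\infty$ is identical, using the $\alpha$-limit set.

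Together with Step~3 this verifies both parts of Definition~\ref{defRob}. The characterisation $\cA_\beta=\bigcup_i\cW^{\rmu}(\cE_\beta^i)$ then follows: any point of $\cA_\beta$ lies on an eternal solution, and that solution converges backwards to some $\cE_\beta^i$.
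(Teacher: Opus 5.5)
The paper gives no proof of this statement. It is quoted from \cite{CLR} and used as a black box, so your outline has to stand on its own. The architecture is the natural one: limits of eternal solutions via collective asymptotic compactness and (c), a uniform transit-time bound, a no-return property obtained by producing a homoclinic structure for $S_0$ in the limit, and convergence via $\omega$/$\alpha$-limit sets and (d). Steps~1, 2 and 4 are essentially sound.

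\textbf{The gap is in Step~3, which carries the theorem.}

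First, the property you set out to prove is false with a single radius. Already for $S_0$, an orbit leaving $\cE_0^i(\eta)$ may cross that neighbourhood again briefly, without converging to $\cE_0^i$, before converging to another set. Properties 1)--2) do not forbid this. So ``leaves and later returns'' is not a homoclinic structure, and the ``strictly decreasing path with at most $k$ transitions'' cannot be derived.

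Second, for the same reason, the claim that the limit chain ``closes up'' is unsupported. Recentre $\xi_n$ at an exit time from $\cE^{i_l}_{\beta_n}(\eta)$. The limit orbit converges backward to $\cE_0^{i_l}$ only if $\xi_n$ stayed in that neighbourhood for times tending to infinity before exiting; after a brief visit, the limit merely passes by $\cE_0^{i_l}$. Then the limit itinerary need not mirror that of $\xi_n$, and nothing forces it to start and end at the same set. Knowing that each piece converges to \emph{some} $\cE_0^m$ is not enough. The transit-time bound does not help, since it controls time spent outside all neighbourhoods, not inside one.

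\textbf{The missing idea: two radii, with the chain anchored by forward invariance.}

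Prove that there are $\delta'<\delta\le\eta$ and $\beta_*>0$ with the following property. For $\beta<\beta_*$, no eternal solution in $\cA_\beta$ is $\delta'$-close to $\cE^i_\beta$ at some $t_0$, at distance $\ge\delta$ at some $t_1>t_0$, and $\delta'$-close again at some $t_2>t_1$.

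Argue by contradiction with $\beta_n\to0$ and $\delta'_n\to0$.
\begin{itemize}
\item The points $\xi_n(t_0^n)$ and $\xi_n(t_2^n)$ converge into $\cE_0^i$. Since $S_0(t)\cE_0^i=\cE_0^i$, the first exit from $\cE^i_{\beta_n}(\delta)$ after $t_0^n$ happens after a time tending to infinity. So the first limit piece converges backward to $\cE_0^i$ and is nontrivial.
\item Take each next time as the exit from $\cE^j_{\beta_n}(\delta)$ \emph{after} $\xi_n$ has entered $\cE^j_{\beta_n}(\delta/2)$, where $\cE_0^j$ is the forward limit of the previous piece. Then consecutive times diverge and consecutive limit pieces match.
\item If a piece's distance to $t_2^n$ stays bounded, forward invariance makes that piece end in $\cE_0^i$, closing the chain. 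Otherwise $\xi_n$ must still leave the current neighbourhood, and the chain continues.
\item The chain need not follow the itinerary of $\xi_n$. Still, within $k+1$ links it returns to $\cE_0^i$ or repeats an index, which gives a homoclinic structure for $S_0$.
\end{itemize}
Applying the same construction to each member of a putative homoclinic structure of $S_{\beta_n}$ gives part 2) of Definition~\ref{defRob}.

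In Step~4, trapping then follows from this lemma together with your transit bound taken at radius $\delta'$. For each $j$, the times at which $\xi_\beta$ is $\delta'$-close to $\cE_\beta^j$ lie in one interval on which $\xi_\beta$ stays $\delta$-close. An unbounded complementary interval would violate the transit bound. Your limit-set argument with (d) then applies.

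These divergence-of-times arguments need (c) uniformly for $t$ in compact intervals. The quoted hypothesis does not state this, but Lemma~\ref{part-c} provides it.
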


One of the thing we have to do before we apply Theorem \ref{thmRob} is to choose the Banach space $Z$. 
Corollary~\ref{c-at} tells us that the right choice  for $Z$ is $Z= V$, which is equal to $\scrH^{1/4}$.

\subsection{Convergence of solutions}
In this subsection we check that conditions of Theorem \ref{thmRob} are satisfied.
\begin{lemma}[Hypothesis (b) of Theorem~\ref{thmRob}]\label{lem:part-b}
Let $Z=V=\scrH^{1/4}$. Then the family $\{S_\beta(\cdot)\}_{\beta\in[0,1]}$ is collectively asymptotically compact on $Z$.
Moreover, $\overline{\bigcup_{\beta\in[0,1]}\cA_\beta}$ is bounded in $Z$. 
\end{lemma}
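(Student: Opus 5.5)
The plan is to derive both claims from the uniform-in-$\beta$ estimates of Section~\ref{semigroupBG}, namely Proposition~\ref{H1absorb1D} and Proposition~\ref{p-hk-gw}, together with the compact embedding $H^2(0,L)\hookrightarrow H^1(0,L)$.

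\emph{Collective asymptotic compactness.} Let $\beta_n\in[0,1]$, $t_n\to+\infty$, and let $\{x_n\}\subset V$ be bounded. Put $B\coloneqq\{x_n\}_n$, which is a bounded subset of $V$. The absorbing-ball argument in Proposition~\ref{H1absorb1D} gives an entering time $t_B$ depending only on a bound for $\cE_1(0)$. On $B$ this quantity is controlled by $\|x_n\|_{H^1}$, using $\|u\|_{L^\infty}\le L^{1/2}\|u_x\|$ to bound the $W_0$ term. The constants $M_1$ and $K_2$ do not depend on $\beta\le 1$.

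I would fix $\tau=1$. Proposition~\ref{p-hk-gw} with $m=2$ then gives $\|S_{\beta_n}(t)x_n\|_{H^2}\le M_{2,1}(B,L,\eta,\zeta)$ for all $t\ge t_B+1$, uniformly in $n$. Since $t_n\to\infty$, this bound holds for $S_{\beta_n}(t_n)x_n$ for all $n$ large. The sequence is therefore bounded in $H^2(0,L)$ and lies in the closed subspace $V$. By Rellich's theorem in one dimension it has a subsequence converging in $H^1$, and the limit stays in $V$ because the boundary condition $u(0)=0$ is preserved under $H^1$ convergence. This argument does not even need the hypothesis that $\{S_{\beta_n}(t_n)x_n\}$ is bounded.

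\emph{Uniform bound on attractors.} Each $\cA_\beta$ is invariant and attracts bounded sets, so it lies in the closure of every absorbing set. By Proposition~\ref{H1absorb1D} it is contained in $\cB=\{\|u\|_{H^1}\le M_1(L,\zeta)\}$. More directly, for $u\in\cA_\beta$ invariance gives $u=S_\beta(t)v_t$ with $v_t\in\cA_\beta$ for arbitrarily large $t$, and the absorbing property applied to the bounded set $\cA_\beta$ places $u$ in $\cB$. The radius $M_1$ does not depend on $\beta\in[0,1]$, so $\bigcup_{\beta}\cA_\beta\subset\cB$, and its closure is bounded in $Z$.

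The main obstacle I expect is the endpoint $\beta=0$, since Proposition~\ref{H1absorb1D} is stated for $\beta\in(0,1]$. Its proof only uses $\beta\le1$, and the $\beta$ terms can simply be dropped, so it covers $\beta=0$ too. A second point to verify is that the entering time $t_B$ depends only on the $H^1$ bound of $B$ and not on $\beta$. This follows from \eqref{E1_decrease}, whose constants are independent of $\beta$, together with the bound $\|(-\Delta)^{-1/2}u\|\le L\|u\|$ used to control $\cE_1(0)$ via $\|u_0\|_{H^1}$.
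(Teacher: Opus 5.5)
Your proposal is correct and follows essentially the same route as the paper: a $\beta$-uniform $H^2$ bound for times beyond the entering time of the absorbing ball, combined with the compact embedding $H^2(0,L)\hookrightarrow H^1(0,L)$, yields collective asymptotic compactness. The paper takes that bound from Lemma~\ref{l-H2}, while you use the case $m=2$, $\tau=1$ of Proposition~\ref{p-hk-gw}, which is proved by the same argument. The boundedness of $\bigcup_{\beta\in[0,1]}\cA_\beta$ likewise comes, as in the paper, from invariance of each attractor together with the $\beta$-independent radius of the absorbing ball in Proposition~\ref{H1absorb1D}, whose proof also covers $\beta=0$.
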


\begin{proof}
\emph{Boundedness of the union of attractors.}
By Corollary~\ref{c-at}, there exists $R>0$, independent of $\beta$, such that
\[
\cA_\beta \subset B_{V}(0;R)\qquad\text{for all }\beta\in(0,1].
\]
The same uniform dissipative estimate used to prove Corollary~\ref{c-at} (and ultimately Proposition~\ref{H1absorb1D})
does not depend on~$\beta$ (the transport term is lower order), hence the same~$R$ also bounds~$\cA_0$ in~$H^1(0,L)$.
Therefore, $\bigcup_{\beta\in[0,1]}\cA_\beta$ is bounded in $V$, hence so is its closure.

\smallskip
\emph{Collective asymptotic compactness.}
Let $\{\beta_n\}\subset[0,1]$, let $t_n\to+\infty$, and let $B \coloneqq \{x_n: n\in\bN\}\subset V$ be bounded.
By Lemma~\ref{l-H2}, there exist $t_2>0$ and $M_2=M_2(B,L,\eta,\zeta)>0$ (depending only on the bound of $B$ in $V$, but not on $\beta$)
such that for every $\beta\in[0,1]$ and every $x\in B$,
\[
S_\beta(t)x \in H^2(0,L)
\quad\text{and}\quad
\norm{S_\beta(t)x}_{H^2(0,L)}\le M_2
\qquad\text{for all }t\ge t_2\,.
\]
Choose $N\in\mathbb{N}$ such that $t_n\ge t_2$ for all $n\ge N$. Since, in addition, $S_{\beta_n}(t_n)x_n\in D(A)$, then $\{S_{\beta_n}(t_n)x_n\}_{n\ge N}$ is bounded in $\scrH^{1/2}\subset H^2(0,L)$.
Since the embedding $\scrH^{1/2}\hookrightarrow \scrH^{1/4}=V$ is compact by the Rellich Theorem, the sequence
$\{S_{\beta_n}(t_n)x_n\}_{n\ge N}$ has a convergent subsequence in~$V$.
This is precisely collective asymptotic compactness in $Z=V$.
\end{proof}

Now, we will check that hypothesis (c) holds.
\begin{lemma}[Hypothesis {\rm(c)} of Theorem~\ref{thmRob}]\label{part-c}
Let $Z=V=\{u\in H^1(0,L):u(0)=0\}$ and let $K\subset V$ be compact.
Then for every $T>0$ we have
\[
\sup_{t\in[0,T]}\ \sup_{u_0\in K}\ \|S_\beta(t)u_0-S_0(t)u_0\|_{Z}\ \longrightarrow\ 0
\qquad\text{as }\beta\to 0.
\]
In particular, for each fixed $t\ge 0$,
\[
\sup_{u_0\in K}\|S_\beta(t)u_0-S_0(t)u_0\|_{Z}\ \longrightarrow\ 0
\qquad\text{as }\beta\to 0,
\]
which is exactly hypothesis {\rm(c)} of Theorem~\ref{thmRob}.
\end{lemma}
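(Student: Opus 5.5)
We will prove the statement by an energy estimate for the difference of two solutions issued from the same datum, exploiting that the two equations differ only by the lower-order term $\beta u_x$. Fix $u_0\in K$, set $u^\beta=S_\beta(t)u_0$, $u^0=S_0(t)u_0$ and $r=u^\beta-u^0$, so that $r(0)=0$ and
\[
r_t=\big(-r_{xx}+(u^\beta+c_0)^3-(u^0+c_0)^3-r\big)_{xx}-\beta u^\beta_x .
\]
The natural norm is the one of $Z=V$, i.e.\ $\|r_x\|$.

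\emph{Step 1: a priori bounds uniform in $\beta$ and $u_0\in K$.} Since $K$ is compact, it is bounded in $V$. Estimate \eqref{E1_decrease} holds for every $\beta\in[0,1]$ and bounds $\cE_1(t)\le \max\{\cE_1(0),(L^4+4)K_2\}$ for all $t\ge0$. Hence $\sup_{t\ge 0}\|u^\beta(t)\|_{H^1}\le C_K$, and therefore $\|u^\beta\|_{L^\infty}\le C_K$ uniformly in $\beta\in[0,1]$ and $u_0\in K$.

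\emph{Step 2: energy estimate in $V$.} Test the difference equation with $-r_{xx}$, formally; the computation is legitimate on $(t_0,T)$ by the regularity \eqref{r-cont} and the coincidence of weak and strong solutions, after which we let $t_0\to0$. The boundary conditions $r(0)=r_x(L)=0$ and $\mu(0)=\mu_x(L)=0$ make the boundary terms vanish and give
\[
\tfrac12\tfrac{\de}{\de t}\|r_x\|^2+\|r_{xxx}\|^2=\big(((u^\beta+c_0)^3-(u^0+c_0)^3-r)_x,\,r_{xxx}\big)-\beta(u^\beta_x,r_{xxx}).
\]
We write $(u^\beta+c_0)^3-(u^0+c_0)^3=q\,r$ with $q$ quadratic in $u^\beta$ and $u^0$. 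Its derivative is bounded by $C(\|r_x\|+\|q_x\|\,\|r\|_{L^\infty})$, and by Step 1 this is at most $C_K\|r_x\|$ in $L^2$. Young's inequality then absorbs $\|r_{xxx}\|^2$ and yields
\[
\tfrac{\de}{\de t}\|r_x\|^2\le C_K\|r_x\|^2+\beta^2 C_K .
\]
Gronwall's inequality with $r(0)=0$ gives $\sup_{t\in[0,T]}\|r_x(t)\|^2\le \beta^2 C_K T e^{C_KT}$. This tends to $0$ uniformly in $u_0\in K$, which is the claim; the statement for fixed $t$ is the special case.

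\emph{Main obstacle.} The delicate point is justifying the test function $-r_{xx}$ down to $t=0$ when $u_0$ is only in $V$, and checking that all boundary terms vanish. The boundary condition for $\mu$ involves $\zeta$, but $\zeta$ cancels in the difference, so $r$ satisfies homogeneous conditions: $r(0)=r_x(L)=0$, and $\tilde\mu=-r_{xx}+qr-r$ satisfies $\tilde\mu(0)=\tilde\mu_x(L)=0$. We therefore rewrite the integrations by parts in terms of $\tilde\mu$, namely $(r_t,-r_{xx})=(\tilde\mu_{xx},-r_{xx})-\beta(u^\beta_x,-r_{xx})$, and use $-r_{xx}=\tilde\mu-qr+r$, which makes the boundary terms vanish. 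An alternative that avoids regularity near $t=0$ is to work with the weak formulation of Definition~\ref{df-weak} and the $H^{-1}$-type norm $\|(-\Delta)^{-1/2}r\|$, as in Step~1 of Proposition~\ref{H1absorb1D}. This gives smallness in a weak norm, which we would then upgrade to $V$ via compactness of $K$ and the continuous dependence in Proposition~\ref{BDM_thm2.4}, together with an equicontinuity argument. We would try the direct $V$-estimate first.
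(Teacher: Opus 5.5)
Your proof is correct. It differs from the paper's only in being self-contained.

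The paper proves the lemma in one line by quoting \cite[Theorem~6.6]{BonDavMor2018}. That result already supplies the Lipschitz-in-$\beta$ bound $\|S_\beta(t)u_0-S_0(t)u_0\|_V\le C\beta$ on $[0,T]$, uniformly for $u_0$ in a bounded subset of $V$. You derive exactly this bound yourself, via the energy estimate for $r=u^\beta-u^0$.

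Both routes use the compactness of $K$ only through its boundedness, and both give the rate $O(\beta)$. The citation is shorter, but it asks the reader to accept that the external theorem's hypotheses match the present potential $W$ and the choice $\beta_0=1$. Your computation makes the mechanism explicit: the cancellation of $\zeta$, the homogeneous boundary conditions for $r$ and $\tilde\mu$, and the uniform $H^1$ bound coming from \eqref{E1_decrease}. For the last point, a finite-time bound obtained from \eqref{r-dE/dt} and Gronwall on $[0,T]$ would also suffice.

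There is one slip to fix in your displayed identity. Testing $-\beta u^\beta_x$ against $-r_{xx}$ produces $+\beta(u^\beta_x,r_{xx})$, not $-\beta(u^\beta_x,r_{xxx})$. The slip is harmless:
\begin{itemize}
\item You have $r_x(L)=0$, and $r_{xx}(0)=0$ because $\tilde\mu(0)=0=r(0)$ and $u^\beta(0)=u^0(0)=0$.
\item Hence $\|r_{xx}\|^2=-(r_x,r_{xxx})\le\|r_x\|\,\|r_{xxx}\|$.
\item Therefore $\beta|(u^\beta_x,r_{xx})|\le \tfrac18\|r_{xxx}\|^2+\tfrac12\|r_x\|^2+\tfrac{\beta^2}{2}\|u^\beta_x\|^2$, and the same differential inequality follows.
\end{itemize}
The same fact $r_{xx}(0)=0$ is what kills the boundary term $\tilde\mu_x(0)r_{xx}(0)$ in your first integration by parts, so it is worth stating explicitly. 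Alternatively, your $\tilde\mu$-formulation avoids the issue entirely: it gives the dissipation $\|\tilde\mu_x\|^2$, and the $\beta$-term is handled with $\|\tilde\mu\|\le L\|\tilde\mu_x\|$. Either way, you obtain the bound $\sup_{t\le T}\|r_x(t)\|\le C_{K,T}\,\beta$.
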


\begin{proof}
Fix $T>0$. Since $K\subset V$ is compact, it is bounded, hence there exists $M>0$ such that
\[
\|u_0\|_{V}\le M\qquad\text{for all }u_0\in K.
\]
For $u_0\in K$ and $\beta\in(0,1)$, set $u_\beta(t) \coloneqq S_\beta(t)u_0$ and $\bar u(t) \coloneqq S_0(t)u_0$.
By \cite[Theorem~6.6]{BonDavMor2018} (applied with $\beta_0=1$ and with our potential $W$, see~\eqref{df-W}),
there exists a constant $C>0$, depending only on $L$, $W$, $T$, and the bound $M$ (but not on the particular choice of $u_0\in K$), such that
\[
\|u_\beta(t)-\bar u(t)\|_{V}\le C\,\beta
\qquad\text{for all }t\in[0,T]\text{ and all }\beta\in(0,1).
\]
Taking the supremum over $u_0\in K$ and $t\in[0,T]$ gives
\[
\sup_{t\in[0,T]}\ \sup_{u_0\in K}\ \|S_\beta(t)u_0-S_0(t)u_0\|_{V}
\le C\,\beta \xrightarrow[\beta\to0]{}0.
\]
Since $Z=V$, this yields the desired convergence in $Z$ and proves hypothesis {\rm(c)}.
\end{proof}

We are going to check that part (d) of the hypothesis of Theorem \ref{thmRob} holds. In the present case the invariant sets $\cE_{\beta}^{j}$ are points in $Z$, \emph{i.e.}, $u_\beta^j$\,, hence $\dist_\rmH (\cE_{\beta}^{j}\,, \cE_{0}^{j})= \| u_\beta^j - u_0^j\|_Z$\,.
\begin{lemma}[hypothesis (d) of Theorem~\ref{thmRob} is satisfied]\label{part-d} 
Let us suppose that $L>0$ does not belong to $E$. Then,
\begin{equation}\label{eq_beta_limit}
\lim_{\beta\to 0}\| u_\beta^j - u_0^j\|_Z=0.
\end{equation}
Moreover, there exist $\gamma>0$ and $\beta_1>0$ such that if $\xi_\beta \colon \bR\to\cA_\beta$\,, for all $\beta<\beta_1$\,, is an eternal solution, then
\[
\sup_{t\in\bR} \norm{\xi_\beta(t) - u_\beta^j}_Z\le\gamma 
\quad
\Longrightarrow
\quad
\xi_\beta(t) = u_\beta^j\quad \text{ for all $t\in\bR$.}
\]
\end{lemma}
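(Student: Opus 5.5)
The first claim, \eqref{eq_beta_limit}, follows at once from Theorem~\ref{thm:IFT}. Since $L\notin E$, each steady state $u_0^j=u^j(L)$ lies on a $C^1$ branch $\beta\mapsto u^j_\beta$ with $\|u^j_\beta-u^j_0\|_{H^4(0,L)}\le C|\beta|$. The embedding $H^4(0,L)\hookrightarrow H^1(0,L)$ is continuous and the boundary condition $u(0)=0$ is preserved along the branch, so $\|u_\beta^j-u_0^j\|_Z\le C'|\beta|\to0$.

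For the isolation property I would use a linearization argument, uniform in $\beta$, around the hyperbolic equilibria. Write $\xi_\beta(t)=u^j_\beta+h(t)$. Then $h$ solves
\[
h_t+\cL_\beta h=N_\beta(h),\qquad \cL_\beta h:=-D_u\cF(u_\beta^j,\beta)h=\big(h_{xx}-(3(u^j_\beta+c_0)^2-1)h\big)_{xx}+\beta h_x,
\]
with homogeneous boundary conditions, where $N_\beta(h)=\big(3(u^j_\beta+c_0)h^2+h^3\big)_{xx}$ is quadratic in $h$.

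\emph{Step 1: uniform spectral gap.} At $\beta=0$ the operator $\cL_0=A^{1/2}\circ\big(-\partial_{xx}+(3(u_0^j+c_0)^2-1)\big)$ is self-adjoint with respect to the $(\scrH^{1/4})'$ (i.e.\ $H^{-1}$-type) inner product. Its spectrum is real, discrete and, by Proposition~\ref{lem:equivalence} and $L\notin E$, does not contain $0$. Hence there is $\delta>0$ with $\sigma(\cL_0)\cap\{|\operatorname{Re}\lambda|<2\delta\}=\emptyset$. Since $\cL_\beta-\cL_0$ is a perturbation of order one, bounded in $H^4$ by $C|\beta|$ by \eqref{eq_small_beta_estimate}, upper semicontinuity of the spectrum of sectorial operators under relatively bounded perturbations gives $\beta_1>0$ such that for $\beta<\beta_1$ the strip $\{|\operatorname{Re}\lambda|<\delta\}$ is free of spectrum. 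The associated spectral projections $P^\rms_\beta,P^\rmu_\beta$ are uniformly bounded, and the corresponding dichotomy estimates
\[
\|e^{-\cL_\beta t}P^\rms_\beta\|_{\cL(\scrH,Z)}\le Ct^{-1/4}e^{-\delta t},\qquad \|e^{\cL_\beta t}P^\rmu_\beta\|\le Ce^{-\delta t}\quad(t>0)
\]
hold with constants independent of $\beta$ (Henry, Ch.~7).

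\emph{Step 2: a bounded eternal solution must vanish.} An eternal solution $h$ that stays small for all $t\in\bR$ satisfies the Lyapunov–Perron fixed-point identity
\[
h(t)=\int_{-\infty}^t e^{-\cL_\beta(t-s)}P^\rms_\beta N_\beta(h(s))\,\de s-\int_t^\infty e^{\cL_\beta(s-t)}P^\rmu_\beta N_\beta(h(s))\,\de s.
\]
On the attractor, Proposition~\ref{p-hk-gw} gives uniform $H^m$ bounds, so all norms are controlled. Interpolating between these bounds and the smallness $\sup_t\|h(t)\|_Z\le\gamma$ yields $\|N_\beta(h)\|_{\scrH}\le C\gamma^{\theta}\|h\|_{Z'}$ for some $\theta>0$ and a suitable norm $Z'$ in which the dichotomy estimates still apply. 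Taking the supremum over $t$ then gives $\sup_t\|h\|\le C\gamma^\theta\sup_t\|h\|$, so $h\equiv0$ once $\gamma$ is small.

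\emph{Main obstacle.} The delicate point is Step 1: obtaining spectral gap and dichotomy constants uniformly in $\beta$ for the non-self-adjoint fourth-order operator, and matching the norms in Step 2. The loss of derivatives in $N_\beta$ has to be absorbed either by the smoothing factor $t^{-\alpha}$ or by the higher-order bounds on $\cA_\beta$.
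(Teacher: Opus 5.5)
Your proposal is correct, and its first two ingredients match the paper's. The limit \eqref{eq_beta_limit} is read off from \eqref{eq_small_beta_estimate}. The spectral gap comes from $\cL_0$ being self-adjoint in the $H^{-1}$-type inner product; the paper phrases this as similarity to $T=(-\Delta)^{-1/2}\scrL_L^0(-\Delta)^{1/2}$. A Neumann-series perturbation then keeps the strip $\{|\Re z|\le\delta_0\}$ in the resolvent set for $\beta<\beta_1$.

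You diverge in how hyperbolicity is turned into isolation. The paper quotes Hale's local stable/unstable manifold theorem, together with the characterization that solutions staying in a $\gamma$-ball for all $t\ge0$ (resp.\ $t\le0$) lie on $\cW^{\rms}_{\rm loc}(u^j_\beta)$ (resp.\ $\cW^{\rmu}_{\rm loc}(u^j_\beta)$). It then concludes from $\cW^{\rms}_{\rm loc}\cap\cW^{\rmu}_{\rm loc}=\{u^j_\beta\}$. You instead write the two-sided Lyapunov--Perron identity for a bounded eternal solution and close a contraction estimate directly.

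The paper's route is shorter. Yours requires deriving dichotomy constants uniform in $\beta$, which follow from the same uniform resolvent bound on the strip plus the sectorial estimate for large $|\Im\lambda|$. In exchange, it makes explicit two things the paper's Step~3 does not track.
\begin{itemize}
\item $\gamma$ is independent of $\beta$, as the statement and hypothesis~(d) of Theorem~\ref{thmRob} require. Applying Hale's theorem at each fixed $\beta$ only gives an a priori $\beta$-dependent radius.
\item Smallness in $Z=H^1$ suffices, even though $N_\beta$ loses two derivatives and is naturally Lipschitz only on $\scrH^{1/2}$. Your interpolation against the uniform $H^m$ bounds on $\cA_\beta$ from Proposition~\ref{p-hk-gw} settles this. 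Concretely, take $Z'=\scrH^{1/2}$ and use the integrable factor $t^{-1/2}e^{-\delta t}$ for the $\scrH\to\scrH^{1/2}$ norm of $e^{-\cL_\beta t}P^{\rms}_\beta$.
\end{itemize}

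One small correction concerns the perturbation. With $q_\beta=3(u^j_\beta+c_0)^2-1$, you have $\cL_\beta-\cL_0=-\big((q_\beta-q_0)h\big)_{xx}+\beta h_x$, which is second order, not first. Its coefficients are $O(\beta)$ in $H^4$, however, so it is still a relatively bounded perturbation with small relative bound, and your Step~1 goes through.
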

\begin{proof}
{\it Step 1.} The limit in \eqref{eq_beta_limit} is an immediate consequence of \eqref{eq_small_beta_estimate} from Theorem~\ref{thm:IFT}. 
We are left with the latter statement. 
Since we deal with couples $(L, \tilde z^j)$ which are nondegenerate, as an immediate consequence, we have that the linearized operator $\scrL_L^\beta$ associated with~\eqref{eq2}, namely 
$$\scrL_L^\beta h = \Delta(-\Delta h+\partial^2_{s^2}W_0(u^j_\beta)h)-\beta h_x\,,
$$
is non-singular when $\beta =0$, due to  Theorem \ref{cor:discreteL}. 

\smallskip

{\it Step 2.} We claim that $\scrL_L^\beta$ is not only non-singular for $\beta\in(0,\beta_1)$, for a certain $\beta_1>0$ (that will be defined at the end of this step), but also that there exists $\delta_0>0$ such that for all $\delta\in (-\delta_0, \delta_0)$ and all $a\in \bR$ we have
\begin{equation}\label{r5.2}
\delta +  ai  \in \rho(\scrL_L^\beta)\qquad\hbox{for }\beta\in(0,\beta_1).
\end{equation}
For this purpose we recall a well-known fact that $\scrL_L^0$ is equivalent to a self-adjoint operator $T \colon D(T)\subset L^2(0,L)\to L^2(0,L)$, \emph{i.e.}, 
$$
(-\Delta)^{1/2}(-\Delta +\partial^2_{s^2}W_0(u^j_\beta))(-\Delta)^{1/2} \eqqcolon  T = (-\Delta)^{-1/2} \scrL_L^0 (-\Delta)^{1/2},
$$
where 
$$
D(T) =\{  u \in H^4: (-\Delta)^{1/2}u(0)=((-\Delta)^{1/2}u)_x(L) =\mu((-\Delta)^{1/2}u)(0)=\mu((-\Delta)^{1/2}u)_x(L)  =0\}.
$$
The operators $-\Delta$
and $-\Delta + \partial^2_{s^2}W_0(u^j_0+c_0)$ are self-adjoint. Hence,
\begin{align*}
(T u, w) &=  
\big( (-\Delta +\partial^2_{s^2}W_0(x, u^j_\beta+c_0))(-\Delta)^{1/2}u, (-\Delta)^{1/2} w \big) \\
& =
\big( (-\Delta)^{1/2}u, (-\Delta +\partial^2_{s^2}W_0(u^j_\beta))(-\Delta)^{1/2} w \big)
=
\big( u, (-\Delta)^{1/2}(-\Delta +\partial^2_{s^2}W_0(u^j_\beta))(-\Delta)^{1/2}  w \big) \\ 
&= (u, Tw).
\end{align*}
Due to this equivalence we conclude that the spectrum of $\scrL_L^0$ is real.

We define $\delta_0>$ as follows. Since $\sigma(\scrL_L^0)$ is real discrete and it does not contain zero there is $\delta_0>0$ such that if $z$ belongs to the strip $\Sigma_{\delta_0}=\{z\in \bC: |\Re z|\le \delta_0\} $, then the distance from $z$ to $\sigma(\scrL_L^0)$ is at least $\delta_0$. 
Now, we shall see that there is $\beta_1>0$ such that $\Sigma_{\delta_0} \subset \rho( \scrL_L^\beta)$ for all $\beta\in (0, \beta_1)$.
We compute
\begin{align*}
(  \scrL_L^\beta - (\delta+ ai) Id)^{-1} &= ( \scrL_L^0 -(\delta+ai) Id+  \scrL_L^\beta -  \scrL_L^0)^{-1}\\& =  
( \scrL_L^0 -(\delta+ai) Id)^{-1}( Id + ( \scrL_L^0 -(\delta +ai) Id)^{-1}(\scrL_L^\beta -  \scrL_L^0))^{-1}.
\end{align*}
Our claim will follow if we show that
$$
\| ( \scrL_L^0 -(\delta +ai) Id)^{-1}(\scrL_L^\beta -  \scrL_L^0)\| \le \frac12
$$
for all $\beta\in (0, \beta_1)$ independently of $a$ and $\delta_0$\,. 
We notice that
$$
\| ( \scrL_L^0 -(\delta+ai) Id)^{-1}(\scrL_L^\beta -  \scrL_L^0)\| \le 
\| ( \scrL_L^0 -(\delta+ai) Id)^{-1} \scrL_L^0 \|\cdot \|(\scrL_L^0)^{-1}(\scrL_L^\beta -  \scrL_L^0)\| .
$$
Moreover, since
$$
( \scrL_L^0 -(\delta+ai) Id)^{-1} \scrL_L^0 = Id +(\delta+ai) ( \scrL_L^0 -ai Id)^{-1} 
$$
and $\|  ( \scrL_L^0 -(\delta+ai) Id)^{-1}\|$ is bounded by the inverse of the distance from  $\delta+ai$ to $\sigma( \scrL_L^0)$, we infer that
$$
\| ( \scrL_L^0 -(\delta+ai) Id)^{-1} \scrL_L^0 \| \le 1 + \frac{\sqrt{\delta_0^2+ a^2}}{\sqrt{\delta_0^2+ a^2}} = 2.
$$
Now, we investigate $(\scrL_L^0)^{-1}(\scrL_L^\beta -  \scrL_L^0).$ We notice that
$$
(\scrL_L^\beta -  \scrL_L^0)h = 3\Delta[( (u^j_\beta)^2 - (u^j_0)^2 )h] - \beta h_x\,.
$$
Now, Theorem \ref{thm:IFT} implies that 
$$
\| (\scrL_L^0)^{-1}(\scrL_L^\beta -  \scrL_L^0)\| \le C_1 \beta \qquad\hbox{for }\beta\in (0, \beta_0)
$$
where $C_1$ depends on $\|u^j_0\|_{H^4}$ and the constant $C$ appearing in \eqref{eq_small_beta_estimate}.
By combining these estimates, we reach
$$
\| ( \scrL_L^0 -ai Id)^{-1}(\scrL_L^\beta -  \scrL_L^0)\| \le 2C_1 \beta \le
\frac{\beta_0}2\leq\frac12,
$$
for $0<\beta < \beta_1 \coloneqq \frac{\beta_0}{4C_1}$.

\smallskip

{\it Step 3.} By \eqref{r5.2}, the equilibrium $u_\beta^j$ is hyperbolic. Hence there exist local stable and unstable manifolds
$\cW^{\rms}_{\rm loc}(u_\beta^j)$ and $\cW^{\rmu}_{\rm loc}(u_\beta^j)$ (see \cite[Theorem~2.3]{hale}).
Moreover, there exists $\gamma>0$ such that the following characterisation holds:
if a solution $z(t)$ satisfies $z(0)\in B_Z(u_\beta^j,\gamma)$ and $z(t)\in B_Z(u_\beta^j,\gamma)$ for all $t\ge 0$,
then $z(0)\in \cW^{\rms}_{\rm loc}(u_\beta^j)$; similarly, if $z(t)\in B_Z(u_\beta^j,\gamma)$ for all $t\le 0$,
then $z(0)\in \cW^{\rmu}_{\rm loc}(u_\beta^j)$.

Now let $\xi_\beta \colon \bR\to \cA_\beta$ be an eternal solution with
$\sup_{t\in\bR}\|\xi_\beta(t)-u_\beta^j\|_Z\le \gamma$.
Then $\xi_\beta(0)\in \cW^{\rms}_{\rm loc}(u_\beta^j)$ (by the $t\ge0$ condition) and
$\xi_\beta(0)\in \cW^{\rmu}_{\rm loc}(u_\beta^j)$ (by the $t\le0$ condition), hence
\[
\xi_\beta(0)\in \cW^{\rms}_{\rm loc}(u_\beta^j)\cap \cW^{\rmu}_{\rm loc}(u_\beta^j)=\{u_\beta^j\}.
\]
Therefore $\xi_\beta(t)\equiv u_\beta^j$ for all $t\in\bR$, which is exactly the implication required in hypothesis~(d).
\end{proof}

After checking that the assumptions of Theorem \ref{thmRob} are satisfied we may state our main result.
\begin{theorem}\label{t-5.6}
Let us suppose that $L>0$ does not belong to $E$ and let $\beta\in (0, \beta_2)$, where $\beta_2$ is provided by Theorem~\ref{thmRob} for this $L$.
Then for any $u_0\in V$ there exists $\psi\in C^\infty(0,L)$, such that the unique solution $u$ to \eqref{eq2} corresponding to $u_0$ converges to $\psi$ in $H^m(0,L)$ for all $m\in \bN$.
\end{theorem}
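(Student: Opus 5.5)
We apply Theorem~\ref{thmRob} with $Z=V=\scrH^{1/4}$ and then use the standard dynamics of gradient semigroups with finitely many hyperbolic equilibria.

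\emph{Step 1: the unperturbed flow.} For $\beta=0$, \eqref{r-dE/dt} shows that $\cE$ is a strict Lyapunov functional for $S_0$. Indeed, $\cE$ is nonincreasing along trajectories. Moreover, $\frac{\de}{\de t}\cE\equiv0$ forces $\mu\equiv0$, so by Lemma~\ref{lemma_0} the trajectory is a steady state. Since $L\notin E$, Theorem~\ref{t201} and the summary theorem of Section~\ref{betazero} give finitely many equilibria $u^1_0,\dots,u^{N(L)}_0$, each nondegenerate and hence isolated. The attractor $\cA_0$ exists by the same argument as Corollary~\ref{c-at}, since Proposition~\ref{H1absorb1D} and Lemma~\ref{l-H2} hold for $\beta=0$ as well. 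A compact-attractor semigroup with a strict Lyapunov function and finitely many equilibria is gradient in the sense of Definition~\ref{defRob}, by \cite[Theorem 5.5]{CLR}. Concretely, $\alpha$- and $\omega$-limits of eternal solutions in $\cA_0$ are connected subsets of the equilibrium set, hence single points, and no homoclinic structures occur because $\cE$ strictly decreases along nonconstant orbits.

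\emph{Step 2: the hypotheses of Theorem~\ref{thmRob}.} Hypothesis (a) is Corollary~\ref{c-at}, (b) is Lemma~\ref{lem:part-b}, (c) is Lemma~\ref{part-c}, and (d) is Lemma~\ref{part-d}, with $\cE^j_\beta=\{u^j_\beta\}$ supplied by Theorem~\ref{thm:IFT}. Theorem~\ref{thmRob} then shows that $S_\beta$ is gradient with respect to $\{u^j_\beta\}_j$ for $\beta\in(0,\beta_2)$.

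One point needs checking: the collection $\{u^j_\beta\}$ must exhaust all equilibria of $S_\beta$ for small $\beta$. Otherwise an extra equilibrium would itself be an eternal solution converging to no $\cE^j_\beta$. Argue by contradiction. Suppose there are $\beta_n\to0$ and steady states $v_n\notin\{u^j_{\beta_n}\}$. The uniform bounds of Proposition~\ref{p-hk-gw} give compactness, so a subsequence converges in $H^4$ to a steady state of \eqref{r-stat}, that is, to some $u^j_0$. The local uniqueness part of the Implicit Function Theorem then forces $v_n=u^j_{\beta_n}$ for large $n$, a contradiction.

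\emph{Step 3: convergence.} Let $u(t)=S_\beta(t)u_0$. The $\omega$-limit set $\omega(u_0)$ is nonempty, compact, connected and invariant, and it lies in $\cA_\beta$. By invariance, each point of $\omega(u_0)$ lies on an eternal solution inside $\omega(u_0)$. Whenever $\omega(u_0)$ contains a point other than an equilibrium, the gradient structure (no homoclinic structures, hence a Morse decomposition ordered by connecting orbits) produces a contradiction. The chain-recurrence argument of \cite[Ch.~5]{CLR} handles this: for a gradient semigroup with finitely many isolated equilibria, every $\omega$-limit set of a bounded forward orbit is a single equilibrium, since the equilibria are finite and an $\omega$-limit set is connected.

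Therefore $u(t)\to\psi=u^j_\beta$ in $V$. Theorem~\ref{thm:IFT} and bootstrapping the elliptic equation $\mu$ show $\psi\in C^\infty$.

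To upgrade the convergence to $H^m$, note that Proposition~\ref{p-hk-gw} bounds $u(t)$ in $H^{m+1}$ for $t\ge t_B+1$. Interpolating between this bound and the $H^1$ convergence gives convergence in $H^m$ for every $m$.

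The main obstacle is the claim that $\omega$-limits reduce to a single equilibrium for the perturbed, non-variational flow. This is exactly what the gradient-semigroup conclusion of \cite{CLR} provides, through the existence of a generalized Lyapunov function (\cite[Theorem 5.5]{CLR}). I would invoke that Lyapunov function directly: it is nonincreasing along orbits and constant only on equilibria, so by LaSalle the $\omega$-limit set lies in the finite equilibrium set, and connectedness then forces it to be a single point.
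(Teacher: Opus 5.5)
Your proposal is correct and follows essentially the same route as the paper. Like the paper, you verify hypotheses (a)--(d) of Theorem~\ref{thmRob} via Corollary~\ref{c-at} and Lemmas~\ref{lem:part-b}--\ref{part-d}. You then conclude that $\omega(u_0)$ is a connected subset of a finite set of equilibria, hence a single point, and upgrade $H^1$ convergence to $H^m$ using Proposition~\ref{p-hk-gw} and interpolation. The extra points you supply are details the paper leaves implicit, and your arguments for them are sound: that $S_0$ is gradient with respect to its finitely many isolated equilibria via $\cE$, that the branches $u^j_\beta$ exhaust all equilibria for small $\beta$, and the LaSalle argument with the generalized Lyapunov function of \cite{CLR} in place of the paper's brief appeal to the absence of homoclinic orbits.
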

\begin{proof}
We divide the proof into three steps.

{\it Step 1.} We are going to check that we may invoke Theorem \ref{thmRob} while taking $Z =V = \scrH^{1/4}$. In the course of proof of Corollary  \ref{c-at} we recalled that semigroup $S_\beta$ generated by \eqref{eq2} is strongly continuous in $V$. 
Above all, this corollary shows existence of global attractors and a uniform bound, hence the hypothesis (a) of Theorem \ref{thmRob} holds.

\smallskip
{\it Step 2.} We showed in Lemmas \ref{lem:part-b}, \ref{part-c}, and \ref{part-d} that the assumptions (b), (c), and (d) of Theorem \ref{thmRob} are satisfied. Thus, 
we deduce from this result existence of positive $\beta_2$ such 
that each semigroup $S_\beta$ for $\beta\in (0, \beta_2)$ is a gradient flow in the sense of Definition \ref{defRob}.  This implies that for any $u_0\in V$ the $\omega$-limit set $\omega(u_0)$ may consist only of the steady states, because there are no homoclinic orbits. 
Since the set $\cS_\beta$ of steady states is finite and $\omega(u_0)$ is connected, we see that it is a singleton, thus there exists $\psi\in \scrH^{1/4}$ such that
$$
\lim_{t\to+\infty} \| u(t) - \psi\|_V =0.
$$

\smallskip
{\it Step 3.}
By Lemma~\ref{lem:instantaneous_smoothing}, we have that $u(t)\in C^\infty(0,L)$ for every $t>0$.
Moreover, Proposition~\ref{p-hk-gw} applied with $B=\{u_0\}$ and $\tau=1$ shows that, for
every $N\ge 2$,
\[
\sup_{t\ge t_B+1}\|u(t)\|_{H^N(0,L)}<\infty.
\]
Since $\psi$ is a steady state of \eqref{eq2}, elliptic bootstrapping applied to
the stationary equation yields $\psi\in C^\infty(0,L)$.

We already know from Step~2 that $u(t)\to\psi$ in $V=H^1(0,L)$. Fix $m\ge 2$ and
choose $N>m$. Since both $u(t)$ and $\psi$ are bounded in $H^N(0,L)$ for
$t\ge t_B+1$, interpolation gives
\[
\|u(t)-\psi\|_{H^m(0,L)}
\le C\,\|u(t)-\psi\|_{H^1(0,L)}^\theta
       \|u(t)-\psi\|_{H^N(0,L)}^{1-\theta}
\to 0
\qquad\text{as }t\to\infty
\]
for a suitable $\theta\in(0,1)$. This proves convergence in $H^m(0,L)$ for every
$m\in \bN$.
\end{proof}

\section*{Acknowledgment}
This project  got off the ground when all authors attended the MATRIX program \emph{Gradient flows in Geometry and PDE}. The authors would like to thank MATRIX for providing an outstanding research environment and acknowledge their generous hospitality.
MM is a member of INdAM-GNAMPA and thanks IDUB for partially supporting his visit to the University of Warsaw, where a part of the work was done. PR thanks INdAM and Politecnico di Torino for supporting his visit to Politecnico di Torino, where a part of the work was done.
GW gratefully acknowledges partial financial support by ARC Discovery Project DP250101080.

\bibliographystyle{siam}
\bibliography{MRW}
\end{document}